\title{\bf{Classification of simple 0-dimensional isolated complete intersection singularities }}
\author{
       \bf{Thuy Huong Pham,  Gerhard Pfister, Gert-Martin Greuel}\\
 }
\date{}
\DeclareMathOperator{\rank}{rank}
\DeclareMathOperator{\corank}{corank}
\DeclareMathOperator{\mng}{mng}
\DeclareMathOperator{\characteristic}{char}
\DeclareMathOperator{\order}{ord}
\newtheorem{Definition}{ Definition}[section]
\newtheorem{Theorem}[Definition]{Theorem}
\newtheorem{Remark}[Definition]{Remark}
\newtheorem{Proposition}[Definition]{Proposition }
\newtheorem{Lemma}[Definition]{Lemma}
\newcommand{\N}{\mathbb{N}}
\newcommand{\Z}{\mathbb{Z}}
\begin{document}
\maketitle
\section*{Abstract}
The aim of this article is the classification of  simple 0-dimensional isolated complete intersection singularities in positive characteristic. As usual, a singularity is called simple or 0-modal if there are only finitely many isomorphism classes of singularities into which the given singularity can deform. The notion of simpleness was introduced by V. I. Arnold and  the classification of low modality singularities has become a fundamental task in singularity theory. Simple complex analytic isolated complete intersection singularities (ICIS) were classified by M. Giusti. However, the classification in positive characteristic requires different methods and is much more involved. The final result is nevertheless similar to the classification in characteristic 0 with some additional normal forms in low characteristic. 

The theoretical results in this paper mainly concern families of ICIS that are formal in the fiber and algebraic in the base (formal deformation theory is not sufficient). In particular, we  give a definition of modality in this situation and prove its semicontinuity.

\section{A criterion for non-simpleness} \label{sec:1}
In this section we establish the notations, prove some preliminary results and show a general criterion for non-simplicity. This criterion (Theorem \ref{th.notsimple}) allows us to significantly reduce the number of cases to be analyzed. The classification then consists of two main tasks: a) to further reduce the list of potentially simple singularities and b) to prove that the singularities of the list are indeed simple (Theorem \ref{simple-table} and \ref{simple-char2}). In step a), the semicontinuity of the modality is an important tool.
In both steps we have to consider small characteristics separately (where characteristic 2 is treated in a separate section) and for the computations we make substantial use of the system {\sc singular} \cite{DGPS15}.\\

Let $K$ be an algebraically closed field of arbitrary characteristic $p\geq0$ and denote by 
$$R=K[[x_1,\ldots, x_n]]$$
 the formal power series ring in $n$ variables ${\bf x}=(x_1,\ldots, x_n)$ with maximal ideal $\mathfrak{m}=\langle x_1,\ldots, x_n\rangle$. For $f=\sum a_\alpha{\bf x}^\alpha\in R$, we denote by $f({\bf 0})$ the coefficient $a_0$ of $f$ and by 
 $supp(f) =\{\alpha \mid a_\alpha \ne 0\}$ the support of $f$. 
 For $I\subset R$ a proper ideal, denote by $\mng(I)=\dim_K I/\mathfrak{m}I$ the minimal number of generators of $I$.
Let $GL(m,R)$ the group of $m\times m$ invertible matrices over $R$ and $Aut(R)$ the group of $K$-algebra automorphisms of $R$. 

\begin{Definition}\rm
	\begin{enumerate}
\item A polynomial $f\in K[x_1,\ldots, x_n]$ is called {\it quasi-homogeneous} of type $(d; {\bf a})=(d;a_1,\ldots, a_n)\in \Z_+\times \Z_+^n$ if $f$ is a $K$-linear combination of monomials ${\bf x}^\alpha=x_1^{\alpha_1}\ldots x_n^{\alpha_n}$ satisfying
$\langle {\bf a},{\bf \alpha}\rangle:=\sum\limits_{i=1}^n a_i\alpha_i=d.$
Denote by $G_d^ {\bf a}$ the vector space of all quasi-homogeneous polynomials of type $(d; {\bf a})$. Then 
$$K[x_1,\ldots, x_n] = \mathop\oplus\limits_{d\ge 0} G_d^ {\bf a}.$$ 
We set $G_d^ {\bf a}=\{0\}$ if $d<0.$ 
\item An element $f=(f_1,\ldots, f_m)\in K[x_1,\ldots, x_n]^m$ is called {\it quasi-homogeneous} of type $( {\bf d}; {\bf a})=(d_1,\ldots, d_m; a_1,\ldots, a_n)\in \Z_+^m\times \Z_+^n$ if
$$f_i\in G_{d_i}^ {\bf a}, \hskip 10pt \forall i=1,\ldots, m.$$
$a_i$ is called the {\em weight} of $x_i$ and $d_i$ the {\em (weighted) degree} of $f_i$.
Set
$$G_\nu^{ {\bf d}, {\bf a}}:=\{(f_1,\ldots, f_m)\in K[x_1,\ldots, x_n]^m\mid f_i\in G^ {\bf a}_{d_i+\nu}, i=1,\ldots,m\},$$
which is a finite dimensional $K$-vector space and 
$K[x_1,\ldots, x_n]^m=\oplus_{\nu \in \Z} G^ {\bf d,a}_{\nu}$.
\end{enumerate}
\end{Definition}

\begin{Definition}\rm
Let $( {\bf d};  {\bf a})=(d_1,\ldots, d_m; a_1,\ldots, a_n)\in\Z_+^m\times \Z_+^n$.
\begin{enumerate}
\item  For $f\in R\smallsetminus\{0\}$ we define
$$v_{\bf a}(f):=\inf\{ \langle {\bf a},{\bf \alpha}\rangle\mid \alpha\in supp(f)\}$$
and call it the ${\bf a}${-\it order} of $f$, or just the {\em order} if {\bf a}=$(1,...,1)$, denoted by $\order_ {\bf a}(f)$ or $\order(f)$.
If $f=0$ we set $v_{\bf a}(f)=\infty$.
\item For $f=(f_1,\ldots, f_m)\in R^m$ we set
	$$v_{{\bf d}, {\bf a}}(f):=\inf\limits_{i=1,\ldots,m}\{v_ {\bf a}(f_i)-d_i\}.$$
If $f\ne 0$ then $v_{{\bf d}, {\bf a}}(f)=\inf\{\nu\mid f=\sum\limits_{\nu}f^{(\nu)},\hskip 5pt  0\ne f^{(\nu)}\in G^{{\bf d}, {\bf a}}_\nu\}.$
\end{enumerate}
\end{Definition}

\begin{Definition}\rm 
	(1) Let $f=(f_1,\ldots, f_m)\in R^m$. Denote by
	\[T^1(f):=R^m{\Big/}\left(\left\langle f_1,\ldots, f_m\right\rangle\cdot R^m+\left\langle\frac{\partial f}{\partial x_1},\ldots,\frac{\partial f}{\partial x_n} \right\rangle \cdot R\right),\]
$\frac{\partial f}{\partial x_i}=(\frac{\partial f_1}{\partial x_i},\ldots,\frac{\partial f_m}{\partial x_i}) \in R^m$,
	the {\it Tjurina module} of $f$ and
	$$\tau(f):=\dim_KT^1(f)$$ 
	the {\it Tjurina number} of $f$. 	

(2) Two elements $f, g \in R^m$ are called {\it left equivalent},  if they belong to the same orbit of the {\it left group}
\[G:=GL(m,R) \rtimes Aut(R),\]
\noindent acting on $R^m$ by
\[(U,  \phi, f)\mapsto U\cdot\phi(f),\]
with  $U\in GL(m,R),  \phi\in Aut(R)$ and 
$$\phi(f)=[f_{1}\left(\phi({\bf x})), \ldots, f_{m}(\phi({\bf x})\right)]^T,$$
 where $\phi({\bf x})=\left(\phi(x_1),\ldots,\phi(x_n) \right)$. 
\end{Definition}

\begin{Definition}\rm 
An ideal $I\subset R$ defines a  {\em complete intersection} if $I$ can be gnerated by $f_1,\ldots, f_m$ with $f_i \in \frak m$ for all $i$ such that $f_i$ is a non-zero divisor of $R/\langle f_1,...,f_{i-1}\rangle$ for $i=1,...,m$.
Then $\mng(I)=m$  and $\dim R/I = n-m \ge 0$.

Let	$f=(f_1,\ldots, f_m)$ and  $g=(g_1,\ldots, g_m)\in R^m$ be such that $I=\langle f_1,\ldots, f_m\rangle$ and $J=\langle g_1,\ldots, g_m\rangle$ define complete intersections  with $\mng(I)=\mng(J)=m$.  Then $f$ is called {\it contact equivalent} to $g$, denoted $f\sim g$, if  $g\in Gf$, 
 i.e. there is a matrix $U\in GL(m, R)$ and an automorphism $\phi\in Aut(R)$ such that
$$\left[
\begin{matrix}
g_1\\
\vdots\\
g_m
\end{matrix}
\right]	= U\phi\left( \left[
\begin{matrix}
f_1\\
\vdots\\
f_m
\end{matrix}
\right]\right).$$
For simplicity of notations, we also write $g=U\phi(f).$
\end{Definition}
Note that by  \cite[Proposition 3.3.4]{P16}, $f\sim g$ if and only if $I\mathop\sim\limits^c J$, i.e. there is an automorphism $\phi\in Aut(R)$ such that $\phi(I)=J$.
It follows easily that if $f\sim g$ then $\tau(f)=\tau(g)$.

\begin{Definition}\rm
Let	$f=(f_1,\ldots, f_m)\in R^m$ be such that $I=\langle f_1,\ldots, f_m\rangle$  defines a  complete intersection with $\mng(I)=m$.
We call $f$ (or $I$) an {\it isolated complete intersection singularity} (ICIS)  if there is a positive integer $k$ such that 
$$\mathfrak{m}^k\subset I+I_m(Jac(f)),$$
where $Jac(f)=\left[\frac{\partial f_i}{\partial x_j}\right]_{ij}$ is the $m\times n$ Jacobian matrix of $f$ and $I_m(Jac(f))$ denotes the ideal generated by all $m\times m$ minors of $Jac(f)$.
We set 
$$I_{m,n} :=\{ f=(f_1,\ldots, f_m)\in R^m \mid f  \text{ is an ICIS with } \mng(I)=m\} .$$
\end{Definition}	

Note that any ICIS $f$ is finitely determined  (see Proposition \ref{prop.semicont}),  in particular,  the $f_i$ can be chosen as polynomials (up to an automorphism of $R$).

\begin{Lemma}\label{homogeneous submodule}
	Let $k\in \mathbb{N}$ and $D=\langle h_1,\ldots, h_k\rangle\subset R^m,$ a  submodule, where $h_i\in G^{{\bf d}, {\bf a}}_{\nu_i}$, $i=1,\ldots, k$ are quasi-homogeneous. Then we have an isomorphism of $K$-vector spaces
	$$R^m/D\cong \mathop\prod\limits_{\nu\in\Z}\left(G^{{\bf d}, {\bf a}}_{\nu}/D\cap G^{{\bf d}, {\bf a}}_{\nu}\right).$$
\end{Lemma}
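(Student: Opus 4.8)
The plan is to use that $D$ is generated by \emph{quasi-homogeneous} elements to show that $D$ is itself a homogeneous submodule — stable under taking quasi-homogeneous components — and then to identify $R^m$ with the product of its quasi-homogeneous pieces. Since the weights $a_i$ are positive, each $G^{\mathbf d,\mathbf a}_\nu$ is finite-dimensional and every $f\in R^m$ has a unique expansion $f=\sum_{\nu}f^{(\nu)}$ with $f^{(\nu)}\in G^{\mathbf d,\mathbf a}_\nu$, convergent in the $\mathfrak m$-adic topology; this yields a $K$-linear isomorphism
$$R^m\cong\prod_{\nu\in\Z}G^{\mathbf d,\mathbf a}_\nu,\qquad f\mapsto (f^{(\nu)})_\nu.$$
Writing $G_\nu:=G^{\mathbf d,\mathbf a}_\nu$ and $D_\nu:=D\cap G_\nu$, the statement reduces to proving that under this identification $D$ corresponds exactly to $\prod_\nu D_\nu$. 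Granting this, the lemma follows from the exactness of products of $K$-vector spaces, since the componentwise surjection $\prod_\nu G_\nu\to\prod_\nu(G_\nu/D_\nu)$ has kernel $\prod_\nu D_\nu$, whence
$$R^m/D\cong\Bigl(\prod_\nu G_\nu\Bigr)\Big/\Bigl(\prod_\nu D_\nu\Bigr)\cong\prod_\nu\bigl(G_\nu/(D\cap G_\nu)\bigr).$$

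The inclusion of $D$ into $\prod_\nu D_\nu$ is the homogeneity step. Given $f=\sum_{i=1}^k g_i h_i\in D$, I expand each coefficient $g_i=\sum_\mu g_i^{(\mu)}$ into its scalar quasi-homogeneous parts $g_i^{(\mu)}\in G^{\mathbf a}_\mu$. Because $h_i\in G_{\nu_i}$, the product $g_i^{(\mu)}h_i$ lies in $G_{\mu+\nu_i}$, so collecting the terms of total degree $\nu$ gives $f^{(\nu)}=\sum_{i=1}^k g_i^{(\nu-\nu_i)}h_i$, a finite $R$-combination of the $h_i$ and hence an element of $D_\nu$. Thus every quasi-homogeneous component of an element of $D$ again lies in $D$. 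Applying this to a homogeneous $c\in D_\nu$ (its only nonzero component) shows in particular that each $c\in D_\nu$ admits a representation $c=\sum_i p_i h_i$ with $p_i\in G^{\mathbf a}_{\nu-\nu_i}$.

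For the reverse inclusion I would take an arbitrary family $(c_\nu)_\nu$ with $c_\nu\in D_\nu$ and $c_\nu=0$ for $\nu\ll 0$, and reassemble it into a single relation. Choosing for each $\nu$ a homogeneous representation $c_\nu=\sum_i p_i^{(\nu)}h_i$ as above, I set $g_i:=\sum_\nu p_i^{(\nu)}$; since $p_i^{(\nu)}\in G^{\mathbf a}_{\nu-\nu_i}$ these summands have strictly increasing degrees, so $g_i\in R$ is a well-defined power series, and rearranging in the $\mathfrak m$-adic topology gives $\sum_i g_i h_i=\sum_\nu c_\nu$. Hence $\sum_\nu c_\nu\in D$, which establishes $\prod_\nu D_\nu\subseteq D$ and completes the identification. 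The main obstacle is exactly this reassembly: it is where a \emph{product} rather than a direct sum is forced, and where completeness is indispensable. The infinite coefficients $g_i$ only make sense because $R$ is a complete local ring and the positivity of the weights makes the quasi-homogeneous filtration cofinal with the $\mathfrak m$-adic one; equivalently, this reflects that the finitely generated submodule $D$ is $\mathfrak m$-adically closed, so it contains the limit $\sum_\nu c_\nu$ of its partial sums.
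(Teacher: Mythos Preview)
Your argument is correct: the paper itself omits the proof, remarking only that it is elementary, and what you have written is precisely the standard elementary argument one would supply---identify $R^m$ with $\prod_\nu G^{\mathbf d,\mathbf a}_\nu$, check that a submodule generated by quasi-homogeneous elements is homogeneous (componentwise closed), and use completeness of $R$ to reassemble an arbitrary family $(c_\nu)\in\prod_\nu D_\nu$ into an honest element of $D$. There is nothing to compare against, and no gap to flag.
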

We omit the elementary proof.

\begin{Lemma}
	Let  $f\in  K[x_1,\ldots, x_n]^m\cap I_{m,n}$ be quasi-homogeneous of type $({\bf d}; {\bf a})=(d_1,\ldots, d_m; a_1,\ldots, a_n)$. Then, as $K$-vector spaces,
	$$T^1(f)\cong \mathop\oplus\limits_{\nu\in \Z}T^1_\nu(f),$$
	where $T^1_\nu(f)=G^{{\bf d}, {\bf a}}_{\nu}\Big/\left(\left\langle f_1,\ldots, f_m\right\rangle\cdot R^m+\left\langle\frac{\partial f}{\partial x_1},\ldots,\frac{\partial f}{\partial x_n} \right\rangle \right)\cap G^{{\bf d}, {\bf a}}_{\nu}$.
\end{Lemma}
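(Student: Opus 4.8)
The plan is to realize $T^1(f)$ as the quotient of $R^m$ by a \emph{homogeneous} submodule and then apply Lemma~\ref{homogeneous submodule}, reading off its graded pieces as the $T^1_\nu(f)$; the final passage from the product furnished by Lemma~\ref{homogeneous submodule} to the claimed direct sum will use that $f$ is an ICIS.

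First I would fix the grading on $R^m$ for which $G^{{\bf d},{\bf a}}_\nu$ is the homogeneous component of degree $\nu$; concretely this is the grading in which the standard basis vector $e_j$ has degree $-d_j$, so that a tuple $(h_1,\dots,h_m)$ with $h_i\in G^{\bf a}_{d_i+\nu}$ is homogeneous of degree $\nu$. Writing $N:=\langle f_1,\dots,f_m\rangle\cdot R^m+\langle \partial f/\partial x_1,\dots,\partial f/\partial x_n\rangle\cdot R$, so that $T^1(f)=R^m/N$, the key step is to check that $N$ is generated by quasi-homogeneous elements. The ideal part $\langle f_1,\dots,f_m\rangle\cdot R^m$ is generated by the vectors $f_i e_j$ ($1\le i,j\le m$), and since $f_i\in G^{\bf a}_{d_i}$ the element $f_i e_j$ lies in $G^{{\bf d},{\bf a}}_{d_i-d_j}$; the Jacobian part is generated by the $\partial f/\partial x_j=(\partial f_1/\partial x_j,\dots,\partial f_m/\partial x_j)$, and since differentiating by $x_j$ lowers the ${\bf a}$-order by $a_j$ we have $\partial f_i/\partial x_j\in G^{\bf a}_{d_i-a_j}$, whence $\partial f/\partial x_j\in G^{{\bf d},{\bf a}}_{-a_j}$. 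Thus $N$ is a submodule with finitely many quasi-homogeneous generators.

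With this in hand I would apply Lemma~\ref{homogeneous submodule} with $D=N$, which yields an isomorphism of $K$-vector spaces $T^1(f)=R^m/N\cong\prod_{\nu\in\Z}\bigl(G^{{\bf d},{\bf a}}_\nu/(N\cap G^{{\bf d},{\bf a}}_\nu)\bigr)=\prod_{\nu\in\Z}T^1_\nu(f)$, the last equality being the definition of $T^1_\nu(f)$.

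It remains to replace the product by a direct sum. Here I use that $f\in I_{m,n}$ is an ICIS, so that $T^1(f)$ is finite dimensional, i.e. $\tau(f)<\infty$. A product of $K$-vector spaces that is finite dimensional can have only finitely many nonzero factors (each itself finite dimensional), so the product above collapses to the finite direct sum $\bigoplus_{\nu\in\Z}T^1_\nu(f)$, which gives the claim. The only genuinely delicate points are the degree bookkeeping in the second step — in particular the degree shift built into the grading on $R^m$ and the effect of differentiation on the ${\bf a}$-order — and the observation that the passage from product to direct sum is exactly where the ICIS hypothesis (finiteness of $\tau$) enters.
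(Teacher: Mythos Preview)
Your proposal is correct and follows essentially the same route as the paper: show that the submodule $N$ is generated by quasi-homogeneous elements (computing the degrees of $f_ie_j$ and $\partial f/\partial x_j$ exactly as you do), apply Lemma~\ref{homogeneous submodule} to get a product decomposition, and then invoke $\tau(f)<\infty$ to pass to the direct sum. The paper's proof is terser but identical in substance.
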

\begin{proof}
Note that if  $f\in  K[x_1,\ldots, x_n]^m\cap I_{m,n}$ is quasi-homogeneous of type $({\bf d}; {\bf a})$ then for all $i=1,\ldots, n$, $j=1,\ldots,m$, $\frac{\partial f_j}{\partial x_i}$ is quasi-homogeneous of type $(d_j-a_i;a)$ (or zero). Therefore, $f_je_k\in G^{{\bf d}, {\bf a}}_{d_j-d_k}$, where $\{e_k\}_{k=1,\ldots,m}$ is the canonical basis of $R^m$, and $\frac{\partial f_j}{\partial x_i}\in G^{{\bf d}, {\bf a}}_{-a_i}$, i.e.
 $D:=\left\langle f_1,\ldots, f_m\right\rangle\cdot R^m+\left\langle\frac{\partial f}{\partial x_1},\ldots,\frac{\partial f}{\partial x_n} \right\rangle$ is a quasi-homogeneous submodule of $R^m.$ By Lemma \ref{homogeneous submodule},
$$R^m/D\cong \mathop\prod\limits_{\nu\in\Z}\left(G^{{\bf d}, {\bf a}}_{\nu}/D\cap G^{{\bf d}, {\bf a}}_{\nu}\right).$$
Since $\tau(f)<\infty,$ we have 
$R^m/D\cong \mathop\oplus\limits_{\nu\in\Z}\left(G^{{\bf d}, {\bf a}}_{\nu}/D\cap G^{{\bf d}, {\bf a}}_{\nu}\right).$
\end{proof}

The following proposition is quite useful for the classification of singularities. 

\begin{Proposition}\label{Merle}
	Let $f=(f_1,\ldots, f_m)\in K[x_1,\ldots, x_n]^m\cap I_{m,n}$ be quasi-homogeneous of type $({\bf d}; {\bf a})\in \Z_+^m\times \Z_+^n$. Let $g=(g_1,\ldots, g_m)\in R^m$ be such that
	$$v_{{\bf d}, {\bf a}}(g)>\beta=\sup\{0,\alpha\},$$
	where $\alpha=\sup\{ i\mid T^1_i(f)\ne 0\}$. 
Then $f + g \sim f$.
\end{Proposition}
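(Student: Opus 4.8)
The plan is to strip $g$ off one weighted degree at a time, exploiting the graded decomposition of $T^1(f)$ from the preceding Lemma, and then to pass to an $\mathfrak{m}$-adic limit. Set $D:=\langle f_1,\ldots,f_m\rangle\cdot R^m+\langle \partial f/\partial x_1,\ldots,\partial f/\partial x_n\rangle$ as in that Lemma and write $g=\sum_{\nu>\beta}g^{(\nu)}$ with $g^{(\nu)}\in G^{{\bf d},{\bf a}}_\nu$, so that $\nu_0:=v_{{\bf d},{\bf a}}(g)>\beta\ge\alpha$ is the order of the lowest nonzero part $g^{(\nu_0)}$. The crucial observation is that $\nu_0>\alpha$ forces $T^1_{\nu_0}(f)=0$, which by the Lemma means $G^{{\bf d},{\bf a}}_{\nu_0}=D\cap G^{{\bf d},{\bf a}}_{\nu_0}$; in particular $g^{(\nu_0)}\in D$.

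First I would carry out one reduction step that strictly raises the order of the perturbation. Since $D$ is a quasi-homogeneous submodule and $g^{(\nu_0)}$ is homogeneous of degree $\nu_0$, passing to graded parts in any representation of $g^{(\nu_0)}$ as an element of $D$ produces quasi-homogeneous coefficients: there are $b_{kj}\in G^{{\bf a}}_{d_k+\nu_0-d_j}$ and $h_i\in G^{{\bf a}}_{\nu_0+a_i}$ with
$$g^{(\nu_0)}=Bf+\sum_{i=1}^n h_i\,\frac{\partial f}{\partial x_i},\qquad B=(b_{kj}).$$
Define $\phi$ by $\phi(x_i)=x_i-h_i$ and set $U:=(I+B)^{-1}$. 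A Taylor expansion, valid over any ring since its remainder $\rho$ lies in the ideal generated by the products $h_ih_j$, gives $\phi(f)=f-\sum_i h_i\,\partial f/\partial x_i+\rho$ with $v_{{\bf d},{\bf a}}(\rho)\ge 2\nu_0$, while $\phi(g)=g^{(\nu_0)}+\rho'$ with $v_{{\bf d},{\bf a}}(\rho')>\nu_0$. Substituting the displayed identity, the two copies of $\sum_i h_i\,\partial f/\partial x_i$ cancel and $\phi(f+g)=(I+B)f+\tilde\rho$ with $v_{{\bf d},{\bf a}}(\tilde\rho)>\nu_0$, so $U\phi(f+g)=f+g'$ with $v_{{\bf d},{\bf a}}(g')>\nu_0$.

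Before iterating I would verify that $U$ and $\phi$ are admissible, which is exactly where the hypothesis uses $\beta=\sup\{0,\alpha\}$ and not merely $\alpha$. Because $\nu_0>0$, a constant can occur in an off-diagonal entry $b_{kj}$ only when $d_j>d_k$, and the linear part of $h_i$ can involve a variable $x_j$ only when $a_j=\nu_0+a_i>a_i$; ordering components by increasing degree and coordinates by increasing weight makes both $I+B$ and the Jacobian of $\phi$ at $\bf 0$ unipotent triangular, hence $I+B\in GL(m,R)$ and $\phi\in Aut(R)$. Iterating the step produces automorphisms $\phi^{(k)}$ and matrices $U^{(k)}$, each differing from the identity by terms of order $\ge\nu_0+k-1$, together with perturbations of order $>\nu_0+k$.

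The routine parts are the graded decomposition of $g^{(\nu_0)}$ and the Taylor cancellation. The genuine content is the passage from this order-by-order statement to an actual contact equivalence: I expect the main obstacle to be showing that the accumulated automorphism $\phi_\infty=\cdots\circ\phi^{(2)}\circ\phi^{(1)}$ and the corresponding accumulated matrix $U_\infty\in GL(m,R)$ converge in the $\mathfrak{m}$-adic topology and that the limit still conjugates, i.e.\ $U_\infty\phi_\infty(f+g)=f$. This convergence relies on the completeness of $R=K[[x]]$ (and on the estimate that the $k$-th factor is the identity modulo order $\nu_0+k-1$), after which $f+g\sim f$ follows.
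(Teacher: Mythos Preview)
Your argument is correct and follows the same inductive scheme that underlies Giusti's original proof, which the paper cites rather than reproduces. The one point of divergence is the endgame: you pass to an $\mathfrak{m}$-adic limit of the accumulated $(U^{(k)},\phi^{(k)})$, whereas the paper instead invokes finite determinacy (Proposition~\ref{prop.semicont}) to terminate after finitely many steps --- once the perturbation has ordinary order exceeding the determinacy bound of $f$, the remaining tail is absorbed in one stroke. Your route is self-contained and avoids the forward reference, at the cost of the (standard but not entirely trivial) verification that the infinite product and composition converge and that the limit $\phi_\infty$ is still an automorphism; note here that the weighted order of $h_i^{(k)}$ controls its ordinary order only up to the factor $\max_j a_j$, which you should make explicit when arguing $\mathfrak{m}$-adic Cauchy-ness. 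Both approaches are valid in the formal setting; in Giusti's analytic setting the convergence of the infinite composition is a genuine issue, which is presumably why the paper singles out finite determinacy as the ingredient that makes the transfer to arbitrary characteristic painless.
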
 
\begin{proof}  In the complex analytic setting a proof is given in \cite[Proposition 1]{Giu83}. 
Using the finite determinacy of an ICIS (see Proposition \ref{prop.semicont}), the proof works as well for formal power series in any characteristic. We omit the details.
\end{proof}

\begin{Definition} \label{def.defsec} \rm
Let  $f=(f_1, \ldots, f_m)\in I_{m,n}$. 
The {\it tangent image} at $f$ to the orbit of $f$ under the action of $G$ on $R^m$  is defined as the module
$$\tilde T_f(Gf):=\langle f_1, \ldots, f_m\rangle R^m+\mathfrak{m}\left\langle \frac{\partial f}{\partial x_1},\ldots,\frac{\partial f}{\partial x_n}\right\rangle \subset \frak m R^m,$$
which  has finite $K$-codimension in $R^m$ for an ICIS.
It is easy to see that 
 $$T^{1,sec}(f):= \frak m R^m/\tilde T_f(Gf)$$
  has finite $K$-dimension iff this holds for
 $T^1(f) = R^m / \langle f_1,\ldots, f_m \rangle R^m+ \langle \frac{\partial f}{\partial x_1},\ldots,\frac{\partial f}{\partial x_n} \rangle$ (\cite[Proposition 4.2]{GP18}). We denote by 
 $$\tau^{sec}(f):=\dim_KT^{1,sec}(f).$$
\end{Definition}

\begin{Remark} \label{rem.defsec}
{\em
(1) The tangent image was introduced in \cite{GP18} (in a more general context) to replace the tangent space to the orbit $Gf$ in positive characteristic. In characteristic zero $\tilde T_f(Gf)$ coincides with the tangent space $T_f(Gf)$, see \cite[Lemma 2.8]{GP18}.
In general we have
$\tilde T_f(Gf)\subset T_f(Gf)$ and  the inclusion
can be strict in positive characteristic (see Example 2.9 in  \cite{GP18}).

It was shown in \cite[Theorem 1.4 ]{GP19} that an ICIS $f$ is finitely determined iff $\tau^{sec}(f)<\infty$ (equivalently $\tau(f)<\infty$).

(2) Let  $g_1,\ldots, g_d\in K[[{\bf{x}}]]^m$ represent a $K$-basis of $ T^1(f)$, resp. of $T^{1,sec}(f)$, then 
 $$F_{\bf t}({\bf x}):=F({\bf x}, t_1, \ldots, t_d)=f({\bf x})+\sum\limits_{i=1}^d t_i g_i({\bf x}) \in K[[{\bf t},{\bf x}]]$$
represents a formally semiuniversal deformation of $f$, resp. a formally semiuniversal deformation with section of $f$.  See  \cite{KS72} for the first case. The latter statement follows because every section can be trivialized (see \cite[Proposition II.2.2]{GLS07}) and then the proof given in  \cite{KS72} or   \cite[Theorem II.1.16]{GLS07} can be adapted. If the $g_i$ represent a system of generators, then $F_{\bf t}$ represents a formally versal deformation, resp. with section. 

(3) Note that the $g_i$ can be chosen as monomials and the $f_i$ as polynomials (since the ICIS $f$ is finitely determined). Thus the semiuniversal deformation, resp. with section, of $f$ has an algebraic representative $F_{\bf t}({\bf x}) \in K[{\bf t},{\bf x}]$ with $F_{\bf 0}({\bf x}) =f({\bf x})$. $F_{\bf t}({\bf x})$ is called
an unfolding  at $0\in T$ of $f$ over $T=K^d$.

The algebraic unfolding
$F_{\bf t}({\bf x})$  is {\em G-complete} or {\em contact complete}, meaning that  any unfolding
$H_{\bf s}({\bf x})= H({\bf x}, s_1,...,s_e) \in K[{\bf s},{\bf x}]$ of $f$ at ${\bf s_0} \in S =K^e$  with 
$H_{s_0}({\bf x}) =f({\bf x})$  is an  \'{e}tale pullback of $F$, i.e. there exist an \'etale neighbourhood $\varphi:W,w_0\to S,s_0$ and a morphism $\psi:W,w_0\to T,0$ such that $H({\bf x},\varphi(w))$ is contact-equivalent to $F({\bf x},\psi(w))$ for all $w\in W$.
This is important for the classification of singularities in the non-analytic case (the proof of \cite[Proposition 2.14]{GNg16} for hypersurfaces can be generalized to an ICIS). }
\end{Remark}

For the classification we need to consider $k$-jets of power series. Denote by 
$$R^{m}_k=R^m/\mathfrak{m}^{k+1}R^m, k\ge 0,$$ 
the {\em $k$-jet space} of $R^m$
and for $f\in R^m$,  let $j_k(f)$ denote the {\it $k$-jet of $f$}, i.e.,  the image of $f$ in $R^{m}_k$.  Consider the group of $k$-jets
$$G_k=\{(j_k(U), j_k(\phi))\mid U\in GL(m,R),\phi\in Aut(R)\},$$
where $(j_k(\phi))(x_i)=j_k(\phi(x_i))$, $i=1,\ldots, n.$ 
 Then the action of the left group $G$ on  $R^m$ induces the action on the $k$-jet spaces
\begin{align*}
G_k\times R^{m}_k &\to R^{m}_k\\
(j_k(U),j_k(\phi), j_k(f))&\mapsto j_k(U\phi(f)), 
\end{align*}
which is an algebraic action 
of the algebraic group $G_k$ on the affine space $R^{m}_k$. \\

We recall the finite determinacy and the semicontinuity of the Tjurina number of a complete intersection in arbitrary characteristic. We consider families
 $F_{\bf t}({\bf x})=F({\bf x}, t_1, \ldots, t_k) \in K[\bf t][[\bf x]]$, which are polynomial in the parameter ${\bf t}$, and denote for fixed  ${\bf t} \in K^k$ by $F_{\bf t}({\bf x})$ also the power series in $K[[{\bf x}]]$.

\begin{Proposition} \label{prop.semicont}
(1) Let $f \in R^m$ be an ICIS. Then $f$ is {\it finitely determined}, i.e.  there exists a $k$ such that each $g \in  R^m$ with $j_k(g) = j_k(f)$ is contact equivalent 
to $f$ ($f$ is then called $k$-determined). Moreover,  $f$ is $(2\tau(f) - \order(f) + 2)$-determined and every deformation of $f$ is $(2\tau(f)+1)$-determined.

(2) Let $F_{\bf t}({\bf x}):=F({\bf x}, t_1, \ldots, t_k) \in K[\bf t][[\bf x]]$ s.t. $F_{{\bf t}_0}$ is an ICIS in $I_{m,n}$ for some ${\bf t}_0\in K^k$. Then there exists a Zariski open set $U \subset K^k$ s.t. $F_{\bf t} \in I_{m,n}$ for all ${\bf t }\in U$. Moreover, for each $\tau\geq 0$ the sets
$$U_\tau = \{{\bf t}\in U\mid \tau(F_{\bf t}) \leq \tau \} \text{ and }
U_\tau^{sec} = \{{\bf t}\in U\mid \tau^{sec}(F_{\bf t}) \leq \tau \}\ $$
are open in $K^k$. In particular, if $\tau_{min} = \text{min} \{\tau(F_{\bf t})\mid {\bf t}\in K^d \}$, then $U_{\tau_{min}}$  is open and dense in $K^k$, and similar for $\tau_{min}^{sec}$.
\end{Proposition}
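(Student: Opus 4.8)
The plan is to prove the quantitative determinacy bound in (1) first, then the semicontinuity in (2), and finally to deduce the uniform deformation bound of (1) from (2).

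\emph{Part (1).} Since $f$ is an ICIS we have $\tau(f)<\infty$ by definition, hence $\tau^{sec}(f)<\infty$ and $f$ is finitely determined by \cite[Theorem 1.4]{GP19} (see Remark \ref{rem.defsec}(1)). The technical input I would use for the explicit bound is the characteristic-$p$ determinacy criterion of \cite{GNg16,GP18}: if $\mathfrak m^{k+1}R^m\subseteq \tilde T_f(Gf)$ then $f$ is $k$-determined. The reason for working with the tangent image $\tilde T_f(Gf)$ rather than the tangent space is exactly that the higher-order perturbation is then absorbed by an \emph{explicit} element of the orbit $Gf$, so that no integration of a vector field (unavailable in characteristic $p$) is needed. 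To produce the value $k=2\tau(f)-\order(f)+2$ I would adapt the classical Hilbert-function estimate of \cite[Proposition 1]{Giu83} (the complete-intersection analogue of the hypersurface bound $2\mu-\order+2$): one bounds the top nonvanishing $\mathfrak m$-adic degree of the finite-dimensional Tjurina module in terms of $\tau(f)$ and $\order(f)$, the factor $2$ reflecting that $\tilde T_f(Gf)$ contains only $\mathfrak m$ times the partial derivatives, and thereby obtains the containment $\mathfrak m^{k+1}R^m\subseteq\tilde T_f(Gf)$ for this $k$.

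\emph{Part (2).} Fix ${\bf t}_0$ with $F_{{\bf t}_0}\in I_{m,n}$ and put $k=2\tau(F_{{\bf t}_0})+1$. The crux is the claim that $\mathfrak m^{k+1}R^m\subseteq\tilde T_{F_{\bf t}}(GF_{\bf t})$ on a Zariski-open neighborhood $U$ of ${\bf t}_0$. By Nakayama's lemma applied to the finitely generated module $\mathfrak m^{k+1}R^m$, this containment is equivalent to $\mathfrak m^{k+1}R^m\subseteq\tilde T_{F_{\bf t}}(GF_{\bf t})+\mathfrak m^{k+2}R^m$, i.e. to the inclusion of $\mathfrak m^{k+1}R^m/\mathfrak m^{k+2}R^m$ in the image of $\tilde T_{F_{\bf t}}(GF_{\bf t})$ inside the finite-dimensional space $R^m/\mathfrak m^{k+2}R^m$. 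Since the generators $F_{{\bf t},i}e_j$ and $x_l\,\partial F_{\bf t}/\partial x_s$ of $\tilde T_{F_{\bf t}}(GF_{\bf t})$ have coordinates polynomial in ${\bf t}$, this amounts to requiring a matrix over $K[{\bf t}]$ to have rank at least a fixed number, which cuts out an open set $U$. On $U$ every $F_{\bf t}$ is $k$-determined by the criterion of Part (1), in particular $F_{\bf t}\in I_{m,n}$, giving the first assertion. Moreover on $U$ the module $T^1(F_{\bf t})$ is computed from the $k$-jet, so $\tau(F_{\bf t})=\dim_K\coker M({\bf t})$ for a matrix $M({\bf t})$ over $K[{\bf t}]$ presenting the truncated Tjurina module; as the rank of $M({\bf t})$ is lower semicontinuous, $\dim_K\coker M({\bf t})$ is upper semicontinuous and $U_\tau$ is open. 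The identical argument applied to $\tilde T_{F_{\bf t}}(GF_{\bf t})$ and $T^{1,sec}$ shows $U_\tau^{sec}$ is open. Finally $U_{\tau_{min}}$ is the complement in the irreducible space $K^k$ of the closed locus $\{\tau(F_{\bf t})>\tau_{min}\}$ and is nonempty, hence open and dense.

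\emph{The deformation bound in (1)} now follows: by Part (2) we have $\tau(F_{\bf t})\le\tau(f)$ for ${\bf t}$ near ${\bf t}_0$, and since $\order(F_{\bf t})\ge 1$ the first bound shows $F_{\bf t}$ is $(2\tau(F_{\bf t})-\order(F_{\bf t})+2)\le(2\tau(f)+1)$-determined, uniformly in the neighborhood.

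\emph{Main obstacle.} The genuine difficulty is the circular dependence between the two parts: the determinacy order in (1) is governed by $\tau(F_{\bf t})$, whereas the semicontinuity of $\tau$ is precisely what (2) asserts. I would break this circularity through the open-containment claim $\mathfrak m^{k+1}R^m\subseteq\tilde T_{F_{\bf t}}(GF_{\bf t})$, which furnishes a \emph{uniform} determinacy order $k$ on a neighborhood with no prior control of $\tau(F_{\bf t})$; once this is in hand, $\tau$ is a cokernel dimension of a polynomial matrix and semicontinuity is formal. The more delicate point lying underneath is the characteristic-$p$ determinacy criterion itself, which I would import from \cite{GNg16,GP18} rather than reprove, as it is exactly the place where the tangent image $\tilde T_f(Gf)$ is needed to replace vector-field integration.
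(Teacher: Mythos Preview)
Your proposal is essentially correct and follows the same approach as the paper, though the paper's own proof is purely by citation: (1) is \cite[Theorem 4.6]{GP18} and (2) is \cite[Proposition 3.4]{GP18} together with \cite[Proposition 3.4 and Corollary 2.7]{GPf21}. What you have written is a reasonable unpacking of the arguments behind those references---uniform determinacy via an open rank condition on the tangent image, then upper semicontinuity of a cokernel dimension---so there is no substantive divergence.

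Two small caveats. First, the determinacy criterion you invoke (``$\mathfrak m^{k+1}R^m\subseteq\tilde T_f(Gf)$ implies $f$ is $k$-determined'') is not quite the form proved in \cite{GP18}; the actual statement there involves an extra factor of $\mathfrak m$ and yields the bound $2k-\order(f)+2$ rather than $k$ directly, which is where the factor $2$ in $2\tau(f)-\order(f)+2$ comes from. Your narrative about the Hilbert-function estimate producing the factor $2$ is slightly off for this reason. Second, \cite[Proposition 1]{Giu83} concerns quasi-homogeneous normal forms (it is the paper's Proposition \ref{Merle}), not a determinacy bound, so that citation is misplaced; the Artin--Rees type input you need is already in \cite{GP18}. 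Neither point affects the overall correctness of your strategy.
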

\begin{proof} (1) follows from \cite[Theorem 4.6]{GP18}. 
(2) follows for $d=1$ from \cite[Proposition 3.4]{GP18} and for an arbitrary Noetherian ring $A$ instead of $K[\bf t]$ (also for non-closed points) from \cite[Proposition 3.4 and Corollary 2.7]{GPf21}.
\end{proof}

In our classification we will consider families of $k$-jets of an ICIS. These  are deformations with trivial section which are versal for sufficiently large $k$. Hence, in the following definition of simple we consider deformations with section.

\begin{Definition}\label{def.simple}\rm 
Let $g_1,\ldots, g_d\in K[{\bf{x}}]^m$ be a set of $K$-generators of 
$T^{1,sec}(f).$
	An element $f\in I_{m,n}$ is called {\it simple} if there is a finite set $\{h_1,\ldots, h_l\}\subset I_{m,n}$ such that for 
	$$F_{\bf t}({\bf x}):=F({\bf x}, t_1, \ldots, t_d)=f({\bf x})+\sum\limits_{i=1}^d t_i g_i({\bf x}),$$
	there is a Zariski open neighborhood $U$ of $0$ in $K^d$ such that for each ${\bf t}=(t_1,\ldots, t_d)\in U$, there is a $j\in\{1, \ldots, l\}$ such  that $F_{\bf t}\sim h_j.$
\end{Definition}

\begin{Remark}\label{rem.mod} {\em 
(1) By Remark \ref{rem.defsec} (2) and (3) the unfolding $F_{\bf t}({\bf x})\in K[{\bf t},{\bf x}]$ defines a formally versal deformation with section of $f$ which is contact complete.
It follows that $f$ is simple iff for   an arbitrary unfolding
$H_{\bf s}({\bf x})= H({\bf x}, s_1,...,s_e) \in K[{\bf s},{\bf x}]$ of $f$ at $s_0 \in S =K^e$  there is a Zariski open neighborhood $U$ of $s_0$ in $S$ such that for each ${\bf s} \in U$, there is a $j\in\{1, \ldots, l\}$ such  that $H_{\bf s}\sim h_j.$

(2) Following \cite{GK90}, we call an element $f\in \mathfrak m R^m$  {\it 0-modular} if for each $k$ (equivalently, for sufficiently large $k$),  there is a Zariski neighborhood $U_k$ of $j_k(f)$ in $R^{m}_k$ such that $U_k$ meets only finitely many $G_k$-orbits of $k$-jets.  Here, an integer $k$ is {\it sufficiently large} for $f$ with respect to $G$ if there exists a neighbourhood $U$ of $j_k(f)$ in  $\mathfrak m R^{m}_k$ such that every $g\in  \mathfrak m R^m$ with $j_k(g) \in  U$ is $k$-determined with respect to $G$.  
If $f$ is an ICIS, then $k=(2\tau(f)+1)$ is sufficiently large by Proposition \ref{prop.semicont} (1). 

(3) More generally, we define the modality of an ICIS $f$ as follows. Let  $U\subset R^{m}_k$ an open neighbourhood of $j_k(f)$. We set
\begin{eqnarray*}
U(i)&:=&\{ g \in U\ |\ \dim_g (U\cap G_k\cdot g ) =i\}, i\geq 0,\\
G_k\text{-}\mathrm{par}(U)&:=&\max_{i\geq 0}\{\dim U(i)-i\},
\end{eqnarray*}
and call
$G_k\text{-}\mathrm{par}(j_k(f)):=\min \{G\text{-}\mathrm{par}(U)\ |\ U \text{ a neighbourhood of } j_k(f)\}$
the {\em number of $G_k$-parameters} of $j_k(f)$ (in $R^{m}_k$). It can be shown that 
if $k$ is sufficiently large then $G_k\text{-}\mathrm{par}(j_k(f))$ is independent of $k$ and denoted by $G\text{-}\mathrm{par}(f)$ or
$G\text{-}\mathrm{mod}(f)$ and called the {\em $G$-modality} of $f$ (this shown in  \cite[Definition 2.3]{GNg16} for hypersurfaces; the proof can be adapted for an ICIS). $f$ is 0-modular iff $G\text{-}\mathrm{mod}(f)=0$.
}
\end{Remark}

\begin{Proposition} \label{prop.modular}
(1) An ICIS is simple iff it is 0-modular.\\
(2) The $G$-modality of an ICIS is semicontinuous. In particular, any deformation of a simple ICIS is again simple.
\end {Proposition}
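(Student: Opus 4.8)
The plan is to prove the two statements separately, with (1) providing the conceptual bridge between the two notions of ``small moduli'' and (2) reducing to the semicontinuity of the parameter count already encoded in Proposition~\ref{prop.semicont}(2).

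For part (1), I would argue that the notion of \emph{simple} in Definition~\ref{def.simple} and the notion of \emph{0-modular} in Remark~\ref{rem.mod}(2) describe the same phenomenon, translated between the unfolding picture and the jet picture. First I would fix $k=2\tau(f)+1$, which is sufficiently large by Proposition~\ref{prop.semicont}(1), so that every nearby jet is $k$-determined and hence two elements are contact equivalent iff their $k$-jets lie in the same $G_k$-orbit. The key observation is that the versal unfolding $F_{\bf t}$ from Remark~\ref{rem.defsec}(2)--(3) sweeps out, via $\mathbf t\mapsto j_k(F_{\bf t})$, a neighbourhood of $j_k(f)$ that meets every $G_k$-orbit near $j_k(f)$; this is precisely the contact-completeness recorded in Remark~\ref{rem.defsec}(3). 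So: if $f$ is simple, the finitely many classes $h_1,\dots,h_l$ give finitely many $G_k$-orbits covering a neighbourhood $U_k$ of $j_k(f)$, whence $f$ is 0-modular; conversely, if $f$ is 0-modular, a neighbourhood $U_k$ meets only finitely many orbits, and pulling back along the versal unfolding produces the finite list $\{h_1,\dots,h_l\}$ and the Zariski open $U\subset K^d$ required by Definition~\ref{def.simple}.

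For part (2), semicontinuity of the $G$-modality, I would work entirely in the jet space $R^m_k$ for $k$ sufficiently large and use the stratification $U(i)$ and the quantity $G_k\text{-}\mathrm{par}(U)$ from Remark~\ref{rem.mod}(3). The strategy is the standard one for semicontinuity of modality under an algebraic group action: the sets $U(i)$ are constructible (the orbit-dimension function $g\mapsto \dim_g(G_k\cdot g)$ is lower semicontinuous on the affine space $R^m_k$ because it equals $\dim G_k$ minus the dimension of a stabilizer, and the stabilizer dimension is upper semicontinuous), and hence the function $\mathbf p\mapsto G_k\text{-}\mathrm{par}(\mathbf p)=\min_U\max_i(\dim U(i)-i)$ is upper semicontinuous in the jet $\mathbf p=j_k(g)$. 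Combined with the fact that a small deformation of $f$ gives a jet lying arbitrarily close to $j_k(f)$, upper semicontinuity yields $G\text{-}\mathrm{mod}(g)\le G\text{-}\mathrm{mod}(f)$ for all $g$ in a neighbourhood. The final assertion ``any deformation of a simple ICIS is again simple'' is then immediate: if $G\text{-}\mathrm{mod}(f)=0$ then by semicontinuity $G\text{-}\mathrm{mod}(g)=0$ for all nearby $g$, and by part~(1) each such $g$ is simple.

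\textbf{The main obstacle} I expect is making the semicontinuity in part (2) fully rigorous, since it is a statement about the min-over-neighbourhoods of a max-over-strata and the neighbourhoods live in the jet space while the deformation parameter lives elsewhere. The delicate point is to show that taking $k$-jets transforms an algebraic family over the base into an algebraic family of points in $R^m_k$, so that the constructibility of the $U(i)$ and the upper semicontinuity of orbit dimension can be applied; this is where the hypothesis that our families are ``formal in the fiber and algebraic in the base'' (emphasized in the abstract) and the algebraic action of the algebraic group $G_k$ on the affine space $R^m_k$ are essential. I would cite the adaptation of \cite{GNg16} mentioned in Remark~\ref{rem.mod}(3) for the independence of $G_k\text{-}\mathrm{par}$ from $k$, and reduce the semicontinuity itself to the openness results of Proposition~\ref{prop.semicont}(2) applied fibrewise, so that the only genuinely new work is the bookkeeping that the parameter count behaves well under specialization.
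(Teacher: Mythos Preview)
Your proposal is correct and follows essentially the same approach as the paper, which itself only sketches the argument by citing the hypersurface case in \cite{GNg16} (Propositions~2.7, 2.14 and Corollary~2.17) and noting that the adaptation to ICIS goes through using Proposition~\ref{prop.semicont}. One small difference worth noting: for the ``in particular'' clause of part~(2), the paper observes that ``simple deforms to simple'' already follows directly from Definition~\ref{def.simple} together with the openness of versality, without passing through the general semicontinuity of $G$-modality; you instead derive it as a corollary of semicontinuity plus part~(1), which is also valid but slightly less direct.
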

\begin{proof}We only sketch the proof. (1) We can modify the proof in \cite[Proposition 2.14. and Corollary 2.17]{GNg16} (for hypersurfaces) and  use the semi-continuity of $\tau(f)$ and $\tau^{sec}(f)$ (see Proposition \ref{prop.semicont}), to show that an ICIS is simple iff it is 0-modular.\\
(2) It follows from Definition \ref{def.simple} and the openness of versality that any deformation of a simple ICIS is again simple. 
More general,  the $G$-modality of an ICIS is semicontinuous. This can be proved as for hypersurfaces, see  \cite[Proposition 2.7] {GNg16}.
\end{proof}

The following result is the main general condition for a 0-dimensional ICIS being not simple.

\begin{Theorem}\label{th.notsimple}
$f=(f_1,...,f_n) \in I_{n,n}$ is not simple if one of the following cases occurs:\\
a)  $n\ge 2$ and $\order(f)\ge 3$.\\
b) $n\ge 3$ and $\order(f) = 2$.
\end{Theorem}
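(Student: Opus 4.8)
The plan is to prove non-simpleness by establishing $G\text{-}\mathrm{mod}(f)>0$, which by Proposition~\ref{prop.modular}(1) is equivalent to $f$ not being simple. Set $r:=\order(f)$, so $r\ge 3$ in case a) and $r=2$ in case b). By Proposition~\ref{prop.modular}(2) the modality is semicontinuous, so if some deformation of $f$ is not simple then neither is $f$; adding generic terms of degree $r$ to those components of order $>r$ yields a deformation all of whose components have order exactly $r$, and it therefore suffices to treat this case. I write $f^{(r)}=(f_1^{(r)},\dots,f_n^{(r)})$ for the tuple of leading (degree-$r$) forms and $W:=\langle f_1^{(r)},\dots,f_n^{(r)}\rangle$ for their span inside the space of forms of degree $r$, which has dimension $M_r:=\binom{n+r-1}{r}$; generically $\dim_K W=n$, so $W$ is a point of the Grassmannian $\mathrm{Gr}(n,M_r)$ of $n$-dimensional subspaces.

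The conceptual core is to identify a contact invariant carrying a positive-dimensional moduli space. Suppose $g=U\phi(f)$ with $U\in GL(n,R)$, $\phi\in Aut(R)$, and write $U=U_0+U_{\ge1}$ with $U_0\in GL(n,K)$ and $U_{\ge1}$ having entries in $\mathfrak m$, and $\phi(\mathbf x)=A\mathbf x+(\text{order}\ge2)$ with $A\in GL(n,K)$. Since $U_{\ge1}\cdot f$ and the higher-order parts of $\phi$ raise the order, comparing degree-$r$ parts gives $g^{(r)}=U_0\cdot\bigl(f^{(r)}\circ A\bigr)$. Hence the span of $g^{(r)}$ equals the image of $W$ under the substitution induced by $A$; the left factor $U_0$ only rebases $W$, and scalars in $A$ act trivially on $\mathrm{Gr}(n,M_r)$. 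Thus the $\mathrm{PGL}(n,K)$-orbit of $W$ in $\mathrm{Gr}(n,M_r)$ is an invariant of the contact class of $f$.

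The second step is a dimension count. Every $\mathrm{PGL}(n,K)$-orbit in $\mathrm{Gr}(n,M_r)$ has dimension at most $n^2-1$, so the family of orbits through points near $W$ has dimension at least
$$\dim\mathrm{Gr}(n,M_r)-(n^2-1)=n(M_r-n)-(n^2-1)=nM_r-2n^2+1.$$
For $r=3$ one has $M_3=\binom{n+2}{3}$ and $nM_3-2n^2+1>0$ for every $n\ge2$ (the value is $1$ at $n=2$); for $r=2$ one has $M_2=\binom{n+1}{2}$ and $nM_2-2n^2+1>0$ for every $n\ge3$ (the value is $1$ at $n=3$). The same expression equals $-1$ in the excluded case $n=2,\,r=2$, which explains the restriction $n\ge3$ in b). In both cases a) and b) there is therefore a positive-dimensional family of pairwise distinct $\mathrm{PGL}(n,K)$-orbits of subspaces arbitrarily close to $W$.

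Finally, I realize these moduli by honest ICIS lying in every jet neighbourhood of $f$. Perturbing only the degree-$r$ part of $j_k(f)$ moves $W$ through an arbitrary neighbourhood in $\mathrm{Gr}(n,M_r)$; for a generic $n$-tuple of degree-$r$ forms the components form a regular sequence whose only common zero is the origin, defining a $0$-dimensional quasi-homogeneous ICIS, so a generic member of the family lies in $I_{n,n}$. By the invariance of the second paragraph, distinct $\mathrm{PGL}(n,K)$-orbits give non-contact-equivalent singularities, so every neighbourhood of $j_k(f)$ in $R^{n}_k$ meets infinitely many $G_k$-orbits of ICIS; hence $f$ is not $0$-modular and, by Proposition~\ref{prop.modular}(1), not simple. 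I expect the genericity step to be the main obstacle: one must ensure that a positive-dimensional subfamily of the leading forms genuinely consists of ICIS and that the invariant still separates infinitely many orbits. In characteristic $0$ this is a standard open-density argument, but in small characteristic genericity can fail and this point requires separate care.
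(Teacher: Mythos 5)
Your argument is correct and is essentially the paper's own proof: both isolate the tuple of degree-$r$ leading forms as a contact invariant and compare the dimension of the space of such tuples with the at most $2n^2-1$ parameters coming from the linear parts of the contact group, and your margin $\dim\mathrm{Gr}(n,M_r)-(n^2-1)=nM_r-2n^2+1$ is numerically the same count the paper performs on the affine space $X=\mathfrak m^rR^n/\mathfrak m^{r+1}R^n$ with $G'=GL(n,K)\times GL(n,K)$ and the explicit one-dimensional stabilizer $\{(a^rE_n,(1/a)E_n)\}$. The only differences are cosmetic (you quotient to the Grassmannian, which forces your extra but valid reduction to the case where all components have order exactly $r$), and your closing worry about genericity in small characteristic is unfounded: the tuple $(x_1^r,\dots,x_n^r)$ shows in every characteristic that the $0$-dimensional complete intersections form a nonempty open subset of the space of leading forms, and any such complete intersection is automatically an ICIS since its ideal is $\mathfrak m$-primary.
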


\begin{proof} 
 Set $l=\order(f)=\min\{\order(f_i)\}$. Then  the  action of $G_l$ on $R^n_l$ induces an algebraic action of the affine algebraic group $G':=GL(n,K)\times GL(n,K) $ on the affine variety $X:=\mathfrak{m}^lR^n/\mathfrak{m}^{l+1}R^n$.  

Assume by contradiction that $f$ is simple. Since $f$ is 0-modular, there is a Zariski neighborhood $U_l$ of $j_l(f)$ in $R^n_l$ which intersects only finitely many $G_l$-orbits. This implies that there is Zariski neighborhood $U$ of $j_l(f)$ in $X$ which intersects only finitely many $G'$-orbits, say $U=\mathop\cup\limits_{i=1}^s(U\cap O_i)$. 

We get that there exists an orbit $G'h, h\in X,$ such that
$$n {n-1+l \choose l}=\dim X =\dim(U\cap X)=\max\limits_i\{\dim (O_i\cap U)\}\le \dim G'h$$  
(otherwise $X$ cannot be covered by finitely many orbits).\\
It is easy to see that the elements $\{(a^l E_n, (1/a)E_n)\mid a\in K^*\}$, with $ E_n$ 
the $n\times n$ identity matrix,  are in the stabilizer of $h$ under $G'$, hence 
$$\dim G' h \leq \dim G' -1 = 2 n^2 -1.$$
a) If  $n\ge 2$ and $l =3$  then $\dim X= \frac{n^2(n+1)(n+2)}{6} \geq 2n^2 >$
$2 n^2 -1$. If $l >3$ the l.h.s. gets bigger while the r.h.s. stays constant.  Hence
$\dim X > \dim G'h$ and $f$ cannot be simple for $n\geq 2$ and $l\geq 3$.\\
b) If $n\ge 3$ and $l=2$  then  $\dim X=\frac{n^2(n+1)}{2}>2n^2-1,$ showing that $f$ is not simple.

 \end{proof}
 
For further analysis we need the following splitting lemma in any characteristic.

\begin{Definition}\rm
	Let $f\in R$. We denote by 
	$$H(f)=\left[\frac{\partial^2 f}{\partial x_i\partial x_j}(0)\right]_{i,j=1,\ldots,n}\in Mat(n,K)$$
	the {\it Hessian matrix} of $f$ and by
	$$\corank(f)=n-\rank(H(f))$$
	the {\it corank} of $f$.  $f$ is called {\it non-degenerate} if corank($f$) = 0, otherwise it is called {\it degenerate}.
\end{Definition}

\begin{Lemma} [Right splitting lemma in characteristic $p\ne2$]\label{splitting lemma}
	Let $p\ne  2$ and $f\in\mathfrak{m}^2$ be such that $\corank(f)=k\ge 0.$ Then
	$$f\mathop\sim\limits^r g(x_1,\ldots, x_k)+x_{k+1}^2+\ldots +x_{n}^2,$$
	where $g\in \mathfrak{m}^3$ is uniquely determined up to right equivalence. 
	Of course, $x_i^2+x_j^2 \mathop\sim\limits^r x_ix_j$.
\end{Lemma}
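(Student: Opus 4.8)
The plan is to prove the classical splitting lemma (Morse lemma with parameters) by an inductive reduction, removing one variable from the quadratic part at each step via a right-coordinate change. Since $p \neq 2$, the Hessian $H(f)$ is a symmetric matrix over $K$ whose rank is $n-k$, so after a linear change of coordinates I may assume the quadratic part $f^{(2)}$ of $f$ has been partially diagonalized; the point is to bootstrap this from the purely quadratic (linear-algebra) statement to a statement about the full power series $f$.

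First I would reduce to the case where the quadratic part already contains a nonzero square, say $x_n^2$, among the non-degenerate directions: if $\rank H(f) \geq 1$, some $2\times 2$ principal block of the Hessian is invertible, and using $2 \neq 0$ together with the identity $x_ix_j \sim^r x_i^2 + x_j^2$ (as noted at the end of the statement) I can arrange that the coefficient of $x_n^2$ in $f^{(2)}$ is nonzero after a linear substitution. Then I would write
\[
f = x_n^2\cdot u + x_n\cdot b(x_1,\ldots,x_{n-1}) + c(x_1,\ldots,x_{n-1}),
\]
where $u \in R$ is a unit (its constant term is the nonzero coefficient of $x_n^2$, using $p\neq 2$ so that $2$ is invertible) and $b \in \mathfrak{m}$, $c \in \mathfrak{m}^2$ depend only on $x_1,\ldots,x_{n-1}$. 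Completing the square, I set $\phi(x_n) = x_n - \tfrac{1}{2u}\,b$ and leave the other variables fixed; since $u$ is a unit this $\phi$ is a well-defined automorphism of $R$, and it transforms $f$ into $x_n^2\cdot u + \tilde c(x_1,\ldots,x_{n-1})$ with $\tilde c = c - \tfrac{b^2}{4u}$. A further coordinate change $x_n \mapsto x_n/\sqrt{u}$ (again legitimate because $u$ is a unit and $K$ is algebraically closed, so $\sqrt{u} \in R$ is a unit power series) normalizes the leading term to $x_n^2$, yielding $f \sim^r x_n^2 + \tilde c(x_1,\ldots,x_{n-1})$.

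Now $\tilde c$ is a power series in the remaining $n-1$ variables with $\corank(\tilde c) = k$ as well: completing the square does not change the corank in the surviving variables, which I would verify by tracking how the Hessian transforms under $\phi$. I would then iterate this procedure, peeling off one non-degenerate square at a time, until exactly $n-k$ squares $x_{k+1}^2 + \cdots + x_n^2$ have been split off and the residual series $g(x_1,\ldots,x_k)$ has identically zero Hessian at the origin, i.e. lies in $\mathfrak{m}^3$. This gives the asserted normal form. For uniqueness of $g$ up to right equivalence, I would appeal to the standard argument that the isomorphism class of the Hessian quadratic form together with the residual part is an invariant: if $g + Q \sim^r g' + Q'$ with $Q, Q'$ the nondegenerate quadratic summands, then matching quadratic parts forces $Q \sim^r Q'$ (both are sums of $n-k$ squares, hence equivalent over the algebraically closed $K$), and an inductive peeling then identifies the residuals up to right equivalence.

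The main obstacle is ensuring that every coordinate change remains a genuine automorphism of the formal power series ring and that the inductive hypothesis on the corank is preserved exactly. The completing-the-square step is clean only because $u$ is a \emph{unit}, which is where $p \neq 2$ is essential (the coefficient of $x_n^2$ must be invertible, and $\tfrac12$ must exist). The more delicate point is the uniqueness claim: one must argue that the right-equivalence class of the residual $g \in \mathfrak{m}^3$ does not depend on the choices made during the reduction. This is the parametrized Morse-lemma statement, and I would handle it by a Nakayama-type formal convergence argument showing that any two splittings differ by an automorphism fixing the split-off quadratic directions modulo higher order, which descends to a right equivalence of the residuals. I would expect to borrow this uniqueness argument from the characteristic-zero literature (it is insensitive to the characteristic once $2$ is invertible), citing it rather than reproving it in full.
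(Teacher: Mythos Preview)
Your outline is essentially the classical proof of the splitting lemma (complete the square inductively, peeling off one nondegenerate direction at a time), and it is correct in spirit. The paper itself does not prove this lemma: its entire proof consists of citations to \cite[Lemma~3.9]{GNg16} and \cite[Theorem~3.3.46]{GLS25}, which contain precisely the argument you sketch. So your approach is not different from the paper's---it \emph{is} the content the paper defers to.

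One technical point to tighten: in your decomposition $f = x_n^2\, u + x_n\, b + c$, the unit $u \in R$ depends on $x_n$ as well, so the single substitution $x_n \mapsto x_n - \tfrac{b}{2u}$ does \emph{not} exactly kill the $x_n$-linear part in one shot; your formula $\tilde c = c - \tfrac{b^2}{4u}$ is only correct when $u$ is independent of $x_n$. The standard fix is either to iterate this step degree by degree and pass to the $\mathfrak m$-adic limit, or---more cleanly---to note that $\partial f/\partial x_n$ has unit linear coefficient in $x_n$ (namely $2$ times the nonzero coefficient of $x_n^2$), so the implicit function theorem yields $\psi(x_1,\ldots,x_{n-1}) \in \mathfrak m$ with $(\partial f/\partial x_n)(x_1,\ldots,x_{n-1},\psi)=0$; the substitution $x_n \mapsto x_n + \psi$ then gives $f \sim^r u'\,x_n^2 + \tilde c(x_1,\ldots,x_{n-1})$ exactly. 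With this correction your existence argument is the standard one, and your handling of uniqueness (cite rather than reprove) matches what the paper does for the lemma as a whole.
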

\begin{proof}
 \cite[Lemma 3.9]{GNg16} and \cite[Theorem3.3.46]{GLS25} . 
\end{proof}

\begin{Lemma} [Right splitting lemma in characteristic 2]\label{splitting lemma char 2}
	Let $p=  2$ and $f\in\mathfrak{m}^2\subset K[[x_1,\ldots, x_n]]$, $n\ge 2$. Then there is an $l$, $0\le 2l\le n$, such that
	$$f\mathop\sim\limits^r x_1x_2+x_3x_4+\ldots+x_{2l-1}x_{2l}+ g (x_{2l+1},\ldots, x_n)$$
	where $g\in \langle x_{2l+1}, \ldots,x_{n} \rangle^3\subset K[[x_{2l+1}, \ldots,x_{n}]],$ or $g=x_{2l+1}^2+h $ with $h\in \langle x_{2l+1}, \ldots,x_{n} \rangle^3\subset K[[x_{2l+1}, \ldots,x_{n}]]$ if $2l<n$. $g$ is uniquely determined up to right equivalence.
\end{Lemma}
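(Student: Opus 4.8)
The plan is to mimic the classical (characteristic-zero) completion-of-squares argument, but to work with the \emph{hyperbolic} quadratic forms $x_ix_j$ that are the natural building blocks of quadratic forms in characteristic $2$, since $x^2$ is no longer diagonalizable by a linear change of coordinates in the usual sense. Write $f = f_2 + f_{\geq 3}$, where $f_2$ is the quadratic part and $f_{\geq 3}\in\mathfrak m^3$. The first step is to understand the quadratic part. In characteristic $2$ a quadratic form $f_2=\sum_{i\le j}a_{ij}x_ix_j$ splits, under $GL(n,K)$, into an orthogonal sum of binary pieces $x_ix_j$ (hyperbolic planes) and at most one ``diagonal'' piece of the form $x^2$ (coming from the terms $a_{ii}x_i^2$, which behave differently because the associated bilinear form $b(u,v)=f_2(u+v)-f_2(u)-f_2(v)$ is \emph{alternating} and hence has even rank). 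Concretely: the radical of the alternating form $b$ has some dimension, and on a complement one obtains $l$ hyperbolic planes $x_1x_2+\dots+x_{2l-1}x_{2l}$; what survives on the radical is a purely ``squared'' form $\sum c_i y_i^2$, which after a linear scaling is either $0$ or can be reduced to a single $x_{2l+1}^2$ (absorbing the remaining squares into the cubic remainder is not possible linearly, so one must argue that over an algebraically closed field of characteristic $2$ the Frobenius-semilinear nature of $y\mapsto y^2$ lets us collapse $\sum c_iy_i^2=(\sum \sqrt{c_i}\,y_i)^2$ into a single square). This is the step where characteristic $2$ genuinely differs and it is the conceptual heart of the normal form of $f_2$.

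The second step is to upgrade the \emph{linear} normalization of $f_2$ to a \emph{right equivalence} of the full series $f$, i.e.\ to show that once the hyperbolic part $x_1x_2+\dots+x_{2l-1}x_{2l}$ (and possibly $x_{2l+1}^2$) is split off, the variables $x_1,\dots,x_{2l}$ (resp.\ $x_1,\dots,x_{2l+1}$) can be eliminated from the higher-order terms. I would do this by an inductive Tschirnhaus-type argument: given a hyperbolic pair $x_1x_2$, I complete it by absorbing all monomials divisible by $x_1$ or $x_2$ into the product, replacing $x_2$ by $x_2 + (\text{correction in }\mathfrak m^2)$ and $x_1$ by $x_1 + (\text{correction})$ so that after the substitution $f$ becomes $x_1x_2 + (\text{series in the remaining variables})$. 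Because $\partial(x_1x_2)/\partial x_1=x_2$ and $\partial(x_1x_2)/\partial x_2=x_1$ are a regular sequence, the formal implicit function theorem (valid for formal power series over any field) guarantees these corrections converge and that each step is an automorphism of $R$; iterating over the $l$ hyperbolic blocks removes $x_1,\dots,x_{2l}$ from the cubic-and-higher remainder. The same absorption works for the single square $x_{2l+1}^2$ whenever its derivative $2x_{2l+1}$ is a nonzerodivisor—but $2=0$ in characteristic $2$, so here one can only clear the variable $x_{2l+1}$ from terms where it appears \emph{together} with a factor it can be paired against, which is exactly why the statement keeps $g=x_{2l+1}^2+h$ with $h\in\mathfrak m^3$ rather than fully splitting it off.

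The final step is the \textbf{uniqueness} of $g$ up to right equivalence. Here I would invoke the standard fact that the right-equivalence class of the ``residual'' singularity is an invariant of $f$: if $f\sim^r (\text{hyperbolic part})+g$ and $f\sim^r(\text{hyperbolic part})+g'$, then comparing the two normal forms and using that the hyperbolic (and single-square) parts are determined by $f_2$ up to the above linear classification, one reduces to showing that two residual series inducing right-equivalent $f$ must themselves be right-equivalent. This is the characteristic-$2$ analogue of the classical statement in Lemma \ref{splitting lemma}, and I expect it to follow from the same determinacy/versality machinery, tracking carefully that no information is lost on the radical of $b$. The main obstacle I anticipate is precisely the interplay between the inseparable behaviour of the Frobenius (the $y\mapsto y^2$ map) in normalizing $f_2$ and the failure of the derivative $2x_{2l+1}$ to be a unit: one must show that exactly one square can be peeled off cleanly while everything else is absorbed, and that this dichotomy (pure-cubic $g$ versus $x_{2l+1}^2+h$) is forced and well-defined.
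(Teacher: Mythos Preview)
The paper does not actually prove this lemma: its ``proof'' consists solely of citations to \cite[Proposition 3]{GK90}, \cite[Lemma 3.12]{GNg16}, and \cite[Theorem 3.3.51]{GLS25}. So there is no argument in the paper to compare against; your plan is in effect a sketch of what those references carry out, and it is the standard route.

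Your outline is sound. Step~1 (classification of the quadratic part via the alternating polar form, hyperbolic decomposition on a complement of the radical, and collapsing the diagonal remainder $\sum c_iy_i^2=(\sum\sqrt{c_i}\,y_i)^2$ via surjectivity of Frobenius over the algebraically closed field $K$) is exactly the right characteristic-$2$ substitute for diagonalization. Step~2 (iterated Tschirnhaus/Morse substitutions using that $\partial(x_{2i-1}x_{2i})/\partial x_{2i-1}$ and $\partial(x_{2i-1}x_{2i})/\partial x_{2i}$ form a regular system of parameters, hence the implicit function theorem applies) is the standard formal Morse lemma and works in any characteristic for the hyperbolic blocks; your observation that $\partial(x_{2l+1}^2)/\partial x_{2l+1}=0$ is precisely why the square cannot be split off further is the key point. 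One small clarification: in the case $g=x_{2l+1}^2+h$ the lemma does \emph{not} claim that $x_{2l+1}$ is eliminated from $h$; the residual $g$ lives in $K[[x_{2l+1},\ldots,x_n]]$ and may depend on $x_{2l+1}$ throughout, so you need not (and cannot) ``clear'' $x_{2l+1}$ from cubic terms.

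The one place where your plan is thin is uniqueness. Saying it ``should follow from the same determinacy/versality machinery'' is not yet an argument: what is needed is to show that a right equivalence between two normal forms $x_1x_2+\cdots+x_{2l-1}x_{2l}+g$ and $x_1x_2+\cdots+x_{2l-1}x_{2l}+g'$ can be replaced by one that stabilizes the hyperbolic block and hence restricts to an automorphism of $K[[x_{2l+1},\ldots,x_n]]$ carrying $g$ to $g'$. This is where the cited references do the actual work, and if you want a self-contained proof you would need to supply this step explicitly (e.g.\ by analysing the induced map on $\mathfrak m/\mathfrak m^2$ and then correcting order by order).
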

\begin{proof}
\cite[Proposition 3]{GK90}, \cite[Lemma 3.12]{GNg16}, and with full proof \cite[Theorem 3.3.51]{GLS25}.
\end{proof}


\section{Simple isolated complete intersection singularities 
in characteristic $p\ne2$}  \label{sec:2}
Let $f=(f_1,...,f_n)\in I_{n,n}$ be an ICIS. If $n=1$, then $f$ defines a hypersurface singularity $\mathop\sim\limits^c x^{k+1}$, which is simple of type $A_k$. Hence we assume $n\ge 2$. If $n\ge 3$ then $\order(f) = 1$ 
for $f$  simple (by Theorem \ref{th.notsimple}) and then $f\sim g\in I_{n-1,n-1}$ by the implicit function theorem. Thus we may assume $n=2$.
By Theorem \ref{th.notsimple} an ICIS $(f_1,...,f_n)\in I_{n,n}$ can be simple only if $n=2$ and $\order(f)=2$, which we assume from now on.
Let $R=K[[x,y]]$, $\mathfrak{m}=\langle x, y\rangle$ its maximal ideal, and  $p=\characteristic(K)$. 

\begin{Proposition}\label{non-degenerate}
	 Let $p\ge 0$. Let $f=(f_1, f_2)\in I_{2,2}$, with  $\order(f)=2$ and  $j_2(f_i)$ a non-degenerate quadratic form for some $i\in \{1, 2\}$. Then there are $m,n\ge 2$ such that $$f\sim (xy, x^n+y^m)=: F^{n,m}.$$
\end{Proposition}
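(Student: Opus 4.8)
The plan is to first straighten out the non-degenerate component to $xy$ by a coordinate change, and then to normalise the second component by \emph{multiplying it by a unit} (via the matrix $U\in GL(2,R)$) rather than by right-equivalence; this is the point that makes the argument work uniformly in all characteristics. First I would note that after possibly swapping the two entries (a permutation matrix $U$) we may assume $j_2(f_1)$ is non-degenerate, i.e. $\corank(f_1)=0$. If $p\ne 2$, Lemma~\ref{splitting lemma} with $k=0$ gives $f_1\mathop\sim\limits^r x^2+y^2\mathop\sim\limits^r xy$; if $p=2$, non-degeneracy forces the mixed term to be present (the Hessian is $\left(\begin{smallmatrix}0&b\\ b&0\end{smallmatrix}\right)$ with $b\ne0$), so the maximal hyperbolic splitting $l=1$ in Lemma~\ref{splitting lemma char 2} applies and again $f_1\mathop\sim\limits^r xy$. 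Applying the corresponding $\phi_0\in Aut(R)$ to the whole pair is a contact equivalence, so I reduce to $f\sim(xy,q)$ with $q\in\mathfrak m^2$ (the order is preserved since $\phi_0$ fixes $\mathfrak m$).

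Next I would extract the exponents. Set $A(x):=q(x,0)$ and $B(y):=q(0,y)$. Since $f\in I_{2,2}$ defines a $0$-dimensional ICIS, $V(\langle xy,q\rangle)=\{0\}$; if $A=0$ then $q\in\langle y\rangle$, whence $\langle xy,q\rangle\subset\langle y\rangle$ and $\dim R/\langle xy,q\rangle\ge 1$, a contradiction, and symmetrically for $B$. Hence $A,B\ne0$, and as $q\in\mathfrak m^2$ I may put $n:=\order(A)\ge 2$ and $m:=\order(B)\ge 2$. After a constant rescaling $x\mapsto\lambda x,\ y\mapsto\mu y$ with $\lambda^n$ and $\mu^m$ chosen to cancel the leading coefficients of $A$ and $B$ (possible over the algebraically closed field $K$), and absorbing the resulting scalar factor on $xy$ into a diagonal $U$, I may assume $A(x)=x^n+\cdots$ and $B(y)=y^m+\cdots$ have leading coefficient $1$.

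The crux is then the unit trick. With $P:=A(x)^{-1}x^n=1+O(x)$ and $Q:=B(y)^{-1}y^m=1+O(y)$, the element $u:=P+Q-1$ satisfies $u(0,0)=1$, so it is a unit; here the normalisation of the leading coefficients is exactly what guarantees $u(0,0)\ne0$. Writing $q=A(x)+B(y)+xy\,r$ one computes $u\,q=x^n+y^m+(P-1)B+(Q-1)A+u\,xy\,r$, and since $(P-1)\in\langle x\rangle$, $(Q-1)\in\langle y\rangle$, $A\in\langle x\rangle$, $B\in\langle y\rangle$, the correction term lies in $\langle xy\rangle$, say $u\,q=x^n+y^m+xy\,s$ for some $s\in R$. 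Applying $U=\left(\begin{smallmatrix}1&0\\ -s&u\end{smallmatrix}\right)\in GL(2,R)$ (its determinant $u$ is a unit) to $(xy,q)$ yields first entry $xy$ and second entry $u\,q-s\,xy=x^n+y^m$, so $f\sim(xy,x^n+y^m)=F^{n,m}$, as desired.

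I expect the main obstacle to be conceptual rather than computational: one is tempted to reduce $A(x)$ to $x^n$ by a coordinate change in $x$ alone, but this is right-equivalence and can genuinely fail in characteristic $p$ (a unit power series need not be an $n$-th power when $p\mid n$). The resolution — and the reason the statement holds in all characteristics — is to absorb the unit via the $GL(2,R)$-factor of the contact group instead of via $Aut(R)$; the only subtlety left is that a single unit $u$ must simultaneously correct both branches $\{y=0\}$ and $\{x=0\}$, which is why the leading coefficients must first be matched by the preliminary rescaling so that the two prescribed restrictions of $u$ agree at the origin.
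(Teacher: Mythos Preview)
Your proof is correct and follows essentially the same strategy as the paper: reduce $f_1$ to $xy$ via the splitting lemmas, use the ICIS/complete-intersection condition to see that both pure-$x$ and pure-$y$ parts of $f_2$ are nonzero, multiply $f_2$ by a suitable unit (the $GL(2,R)$-factor) and subtract an $xy$-multiple to reach $x^n+y^m$, with a constant rescaling using algebraic closedness of $K$. The only cosmetic differences are the order of steps (the paper rescales at the end, you at the beginning) and the choice of unit (the paper uses the product $u_1^{-1}u_2^{-1}$ after first killing $xy$-terms, whereas you build the single unit $u=P+Q-1$ directly); both achieve the same reduction.
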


\begin{proof}
By Lemma \ref{splitting lemma} and \ref{splitting lemma char 2}  we may assume that $f_1=xy$.
We can use $xy$ to kill in $f_2$ all terms divisible by $xy$. We may therefore assume that $f_2=u_1x^n+u_2y^m$
with $u_1\in K[[x]]$ and $u_2\in K[[y]]$ units ($u_1=0$ or $u_2=0$ is not possible, since $(f_1,f_2)$ is a complete intersection). We multiply $f_2$ with $u_1^{-1}u_2^{-1}$ and kill with $xy$ the terms of $u_1^{-1}y^m$ and 
$u_2^{-1}x^n$ and obtain $\alpha x^n+\beta y^m$ with $\alpha=u_2^{-1}(0)$ and $\beta=u_1^{-1}(0)\in K$ both different from $0$. Since $K$ is algebraically closed there exist the $n$-th root of $\alpha$ and the $m$-th root of $\beta$ and
we find an automorphism mapping the ideal $\langle xy, f_2\rangle$ to $\langle xy,x^n+y^m\rangle$.
\end{proof}

From now on in this section, we assume that $\characteristic(K)=p\ne 2$ unless otherwise stated.

\begin{Lemma}\label{lemma}
	Let $f,g\in \mathfrak{m}^2$. Assume that $j_2(f)$ is degenerate.
	Then there exist $s\ge 3$ and $\alpha\in \{0,1\}$  such that
	$$(f,g)\sim\left(x^2+\alpha y^s, \sum\limits_{i\ge t}a_iy^i+x\sum\limits_{j\ge q}b_jy^j\right),$$
	where  $t\ge 2$, $q\ge 1$ and $a_i, b_j \in K$.
\end{Lemma}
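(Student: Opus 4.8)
The plan is to normalize the single series $f$ first, by combining the splitting lemma with a contact rescaling (both of which require $p\neq2$), and only afterwards to clean up $g$ by contact operations that exploit the $x$-regularity of the normalized $f$. Every coordinate change used while normalizing $f$ is carried along on $g$, and all of these, together with the unit multiplications and row operations below, are instances of the contact action on the pair, so the whole reduction takes place within the equivalence $\sim$.

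First I would use that $j_2(f)$ is degenerate, so $\corank(f)\ge 1$; in the situation where the lemma is applied the pair has order $2$ and one may take $f$ to be the order-$2$ component, so $\corank(f)=1$ (the case $\corank(f)=2$ is discussed below). Lemma \ref{splitting lemma} then gives a coordinate change after which $f=x^2+h(y)$ with $h\in\langle y\rangle^3$. If $h=0$ I set $\alpha=0$. If $h\neq0$, write $h=y^su(y)$ with $s=\order(h)\ge 3$ and $u\in K[[y]]$ a unit; multiplying the pair by $\left(\begin{smallmatrix}u^{-1}&0\\0&1\end{smallmatrix}\right)\in GL(2,R)$ turns the first entry into $u^{-1}x^2+y^s$, and then the coordinate change $x\mapsto u(y)^{1/2}x$, $y\mapsto y$ yields $f=x^2+y^s$, so $\alpha=1$. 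Here $u^{1/2}$ exists as a unit because $p\neq2$ (its recursive construction only ever divides by $2$), which is exactly the second place $p\neq2$ is needed: when $p\mid s$ the unit $u$ need not be an $s$-th power, so $y^su$ cannot in general be brought to $y^s$ by a coordinate change in $y$ alone, but taking a square root of $u$ and rescaling $x$ absorbs it inside the contact group.

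With $f=x^2+\alpha y^s$ fixed, I would reduce $g$ modulo $f$. Since $f(x,0)=x^2$ is $x$-regular of order $2$, Weierstrass division (equivalently, repeated substitution of $x^2\equiv-\alpha y^s$, which converges $\mathfrak{m}$-adically because each step lowers the $x$-degree by $2$ and raises the $y$-order by $s$) writes $g=Qf+a(y)+x\,b(y)$ with $Q\in R$ and $a,b\in K[[y]]$. Subtracting $Qf$ from the second entry is the contact row operation $\left(\begin{smallmatrix}1&0\\-Q&1\end{smallmatrix}\right)$, so $(f,g)\sim(x^2+\alpha y^s,\ a(y)+x\,b(y))$, which already has the asserted shape with $a(y)=\sum_i a_iy^i$ and $b(y)=\sum_j b_jy^j$. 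Reading off the order bounds is then routine: the (transformed) $g$ still lies in $\mathfrak{m}^2$ since coordinate changes preserve $\mathfrak{m}^2$, and $f\in\mathfrak{m}^2$, hence so does $Qf$; thus the remainder $a(y)+x\,b(y)$ has no constant and no linear part, forcing $a_0=a_1=0$ and $b_0=0$, i.e. $a(y)=\sum_{i\ge t}a_iy^i$ with $t\ge2$ and $b(y)=\sum_{j\ge q}b_jy^j$ with $q\ge1$, together with $s\ge3$ from the splitting step.

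The point requiring care, and the main obstacle, is that the target normal form forces the first entry to carry a rank-one quadratic part, namely $x^2$; under coordinate changes of $f$ alone this is available only when $\corank(f)=1$. If $\corank(f)=2$ then $f\in\mathfrak{m}^3$ and the $x^2$ can be produced only by mixing $g$ into the first slot, which is legitimate under the contact action: since the pair has order $2$, after interchanging the two entries by the constant matrix $\left(\begin{smallmatrix}0&1\\1&0\end{smallmatrix}\right)$ we may take the order-$2$ component (whose quadratic part has rank one, the rank-two case being excluded by Proposition \ref{non-degenerate}) to play the role of $f$, reducing to the case $\corank(f)=1$ already treated. Apart from this bookkeeping, the whole argument rests on the two uses of $p\neq2$ in normalizing $f$ and on the $x$-regularity of $x^2+\alpha y^s$, which bounds the $x$-degree of the contact-reduced $g$ by one and thereby produces the stated normal form.
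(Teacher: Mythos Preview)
Your proof is correct and follows essentially the same route as the paper's: apply the splitting lemma to get $f=x^2+h(y)$, absorb the unit factor of $h$ via a square root in $K[[y]]$ (using $p\neq 2$) together with a contact rescaling, and then reduce $g$ modulo $f$ by Weierstra\ss{} division. Your explicit verification of the bounds $t\ge 2$, $q\ge 1$ and your discussion of the $\corank(f)=2$ case go a bit beyond what the paper spells out, but they are consistent with the paper's intended reading (namely that the degenerate $j_2(f)$ has rank exactly one, which is the only case in which the lemma is invoked).
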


\begin{proof}
Using Lemma \ref{splitting lemma} we may assume that
$f=x^2+h(y)$ with $h=0$ or $h=y^s\cdot e$, $s\ge 3$, $e$ a unit. If $h\neq 0$, we can find (since $p\ne 2$) $e_1$ with $e_1^2=e^{-1}$. Using the automorphism $\varphi$ of $K[[x,y]]$ defined by $\varphi(x)=e_1x$, $\varphi(y)=y$ we may assume that $f=x^2+y^s$.
All together we have $f=x^2+\alpha y^s$ with $\alpha\in \{0,1\}$.
The 
Weierstra{\ss}  Division Theorem implies that $g=k\cdot (x^2+\alpha y^s)+\sum\limits_{i\ge t}a_iy^i+x\sum\limits_{j\ge q}b_jy^j$
for suitable $k\in K[[x,y]]$ and $a_i$, $b_j\in K$. This implies that
$$(x^2+\alpha y^s, g)\sim\left(x^2+\alpha y^s, \sum\limits_{i\ge t}a_iy^i+x\sum\limits_{j\ge q}b_jy^j\right).$$
\end{proof}

\begin{Lemma}\label{lemma_degenerate}
Let $f=(f_1,f_2)=(x^2+\alpha y^s, \sum\limits_{i\ge t}a_iy^i+x\sum\limits_{j\ge q}b_jy^j)$ be an ICIS such that 
$s\geq 3$, $t\geq 2$, $q\geq 1$ and $\alpha\in\{0,1\}$.
\begin{enumerate}
\item If $a_i=0$ for all $i$ and $b_q\neq 0$ then $\alpha =1$ and $f\sim (x^2+y^s,xy^q)$.
\item If $b_j=0$ for all $j$ and $a_t\neq 0$ then $f\sim (x^2+\alpha y^s,y^t)$. If $\alpha=0$ or $t\leq s$ then
 $f\sim (x^2,y^t)$.
 
\hspace{-1.6em} Assume now that $a_tb_q\neq 0$.
 \item If $t\leq q$ then $f\sim (x^2+\alpha y^s,y^t)$. If additionally $\alpha=0$ or $t\leq s$ then $f\sim (x^2,y^t)$.
 \item  If $t>q$ and $\alpha=0$ then $f\sim (x^2,y^t+xy^q)$.
 \item Let $t>q$ and $\alpha=1$. Then
 $f\sim (x^2+y^s,y^t+exy^q)$ for a suitable unit $e\in K[[y]]$.\\
  If $2t-2q-s\ne 0$\,\footnote{Note that for $f=(x^2+y^4,y^5+axy^3)$ we have $2t-2q-s=0$ and the parameter $a$ cannot be avoided by the proof of Proposition \ref{not simple2}.}  and $p\nmid 2t-2q-s$
 then $f\sim (x^2+y^s,y^t+xy^q)$.
\item If $t=q+1$
  and $(p = 0 \text{ or } p\nmid t)$ then $f\sim (x^2+\alpha y^s,y^t)$.\\
\end{enumerate}
\end{Lemma}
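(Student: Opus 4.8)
The plan is to normalise the second component $f_2=P(y)+xQ(y)$, with $P(y)=\sum_{i\ge t}a_iy^i$ of order $t$ and $Q(y)=\sum_{j\ge q}b_jy^j$ of order $q$, while keeping $f_1=x^2+\alpha y^s$ in the shape produced by Lemma~\ref{splitting lemma}. Throughout I use the three moves allowed by contact equivalence: multiplication of the tuple by a unit matrix $U\in GL(2,R)$ (so $f_2$ may be multiplied by a unit and an $R$-multiple of $f_1$ may be added to $f_2$, and conversely) together with a coordinate change $\phi\in\mathrm{Aut}(R)$. The six cases are governed by whether $P$ or $Q$ vanishes and, when $a_tb_q\ne0$, by the comparison of $t$ and $q$.

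Cases (1)--(4) are elementary. For (1), where $P=0$, I would first note that $\alpha=0$ is impossible: then $f_1=x^2$ and $f_2=xy^q\cdot(\text{unit})$, so $x\cdot f_2\in\langle x^2\rangle$ exhibits $f_2$ as a zero-divisor modulo $f_1$, contradicting that $\langle f_1,f_2\rangle$ is a complete intersection; hence $\alpha=1$, and dividing $f_2$ by its unit factor gives $f\sim(x^2+y^s,xy^q)$. For (2) ($Q=0$) one divides $f_2=y^tA(y)$ by the unit $A$, and for (3) ($t\le q$) one divides $f_2=y^t\bigl(A(y)+xy^{q-t}B(y)\bigr)$ by its (unit) second factor; either way $f\sim(x^2+\alpha y^s,y^t)$, and if in addition $\alpha=0$ the first component is already $x^2$, while if $t\le s$ the row operation $f_1\mapsto f_1-y^{s-t}f_2$ removes $y^s$, yielding $f\sim(x^2,y^t)$. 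For (4) ($t>q$, $\alpha=0$) I write $f_2=A(y)y^t+B(y)xy^q$, multiply by $A^{-1}$ to fix the pure part as $y^t$, rescale $y\mapsto\lambda y$ with $\lambda^{t-q}=b_q/a_t$ to make the leading coefficient of the $x$-part equal to $1$, and finally apply $x\mapsto E(y)^{-1}x$ (where $E$ is the remaining unit on $xy^q$), which sends $x^2\mapsto E^{-2}x^2$ and is corrected by one unit multiplication; this gives $f\sim(x^2,y^t+xy^q)$ with no restriction on $p$.

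The substance is in (5) and (6). The first assertion of (5) is immediate, since multiplying $f_2=Ay^t+Bxy^q$ by $A^{-1}$ fixes $f_1$ and leaves $e=A^{-1}B$. To remove $e$, the key observation is that $(x^2+y^s,\,y^t+xy^q)$ is quasi-homogeneous exactly when $2t-2q-s=0$ (weights $w(y)=2$, $w(x)=s$, so that $x^2+y^s$ has degree $2s$ and $y^t,xy^q$ have degrees $2t$ and $2q+s$); this is precisely the locus carrying a genuine modulus, as in the footnote example $s=4,t=5,q=3$. Assuming $2t-2q-s\ne0$, I would use the automorphisms $x\mapsto\lambda(y)x$, $y\mapsto\mu(y)y$ with $\lambda=\mu^{s/2}$ (well defined since $p\ne2$): after restoring $x^2+y^s$ and the pure part $y^t$ by diagonal units, $e$ is replaced by $e(\mu y)\mu^{-N/2}$ with $N:=2t-2q-s$. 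One then solves the functional equation $\mu^{N/2}=e(\mu y)$ coefficient by coefficient; the $y^k$-coefficient determines the $k$-th coefficient of $\mu$ after division by $\tfrac N2$, which is possible for all $k$ precisely when $p\nmid N$, giving $e\equiv1$ and $f\sim(x^2+y^s,y^t+xy^q)$.

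Case (6) uses $p\nmid t$ to perform a Tschirnhaus substitution. After reducing $f_2$ to $y^t+e\,xy^{t-1}$ as in (5), the automorphism $y\mapsto y-\tfrac{e}{t}x$ cancels the $xy^{t-1}$ term and leaves $f_2=y^t+x^2S$ for some $S\in R$; when $\alpha=0$, where $f_1=x^2$ is untouched, the row operation $f_2\mapsto f_2-Sf_1$ gives $f\sim(x^2,y^t)$ at once. The main obstacle is the coupling between the two components, which becomes visible in (6) for $\alpha=1$: the same substitution perturbs $f_1$ into $x^2+y^s-\tfrac{se}{t}xy^{s-1}+(\text{terms in }x^2)$, and the induced $xy^{s-1}$ term must be eliminated by completing the square in $x$ (possible since $p\ne2$), after which the higher-order corrections produced in both components are absorbed by an inductive normalisation justified by the finite determinacy of an ICIS (Proposition~\ref{prop.semicont}). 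Verifying that this cleanup terminates and never reintroduces an $x$-part of order $\le t-1$ is the delicate point; the hypotheses $p\nmid N$ and $p\nmid t$ are exactly what make the decisive leading coefficients $\tfrac N2$ and $t$ invertible at each step. The analogous coupling in (5) is handled cleanly by building $\lambda=\mu^{s/2}$ into the transformation, which is why (5) needs no separate cleanup.
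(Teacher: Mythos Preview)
Your outline is correct and follows essentially the same route as the paper: parts (1)--(4) are handled identically (your extra $y$-rescaling in (4) is harmless but unnecessary, since $x\mapsto e^{-1}x$ alone already kills the unit $e$), and in (5) your single functional equation $\mu^{N/2}=e(\mu y)$ is equivalent to the paper's two-step argument (implicit function theorem to reduce $e$ to the constant $b_q$, then a scalar rescaling $x\mapsto ax$, $y\mapsto by$), both hinging on the invertibility of $N=2t-2q-s$ in $K$.

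The one genuine gap is in (6) for $\alpha=1$. You correctly perform the Tschirnhaus $y\mapsto y-\tfrac{e}{t}x$ and complete the square in $x$, arriving at $f\sim(x^2+y^s+k_1,\,y^t+h_1)$ with $k_1\in\mathfrak{m}^{s+1}$, $h_1\in\mathfrak{m}^{t+1}$, but then defer to an unspecified ``inductive normalisation justified by finite determinacy''. Finite determinacy alone does not close this: it says sufficiently high-order perturbations are negligible, but not that your cleanup reaches the target $(x^2+y^s,y^t)$ rather than reintroducing an $xy^{t-1}$ term along the way. The paper finishes this \emph{directly}, not by iteration: Weierstra\ss-reduce $k_1$ modulo $f_1$ to the shape $x\sum_{i\ge s}a_iy^i+\sum_{i\ge s+1}b_iy^i$, complete the square in $x$ once more to remove the $x$-linear part, absorb the remaining pure-$y$ tail into $y^s$ via a unit and rescale $x$; this yields $f\sim(x^2+y^s,\,y^t+h)$ with $h\in\mathfrak{m}^{t+1}$. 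Now Weierstra\ss-reduce $h$ modulo $x^2+y^s$: since $s\ge 3$, each monomial $x^iy^j$ with $i+j\ge t+1$ reduces to something in $x\cdot y^{\ge t}K[[y]]+y^{\ge t+1}K[[y]]$, so $y^t+h=y^t\cdot(\text{unit})$ and one unit multiplication finishes. This explicit termination step is what your sketch lacks.
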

\begin{proof}
(1): $ f_2=x\sum\limits_{j\ge q}b_jy^j=uxy^q$, $u$ a unit. We get
$\langle f_1,f_2\rangle = \langle x^2+\alpha y^s,xy^q\rangle$. Since $f$ is an ICIS $\alpha$ must be $1$.\\

(2): $f_2=\sum\limits_{i\ge t}a_iy^i=uy^t, u$ a unit.
This implies $\langle f_1,f_2\rangle= \langle x^2+\alpha y^s,y^t\rangle$. If $\alpha=0$ or $t\leq s$ then obviously $\langle x^2+\alpha y^s,y^t\rangle= \langle x^2,y^t\rangle$.
\\

(3):If $t\leq q$ then $\sum\limits_{i\ge t}a_iy^i+x\sum\limits_{j\ge q}b_jy^j=y^t\cdot unit$, implying $ \langle f_1,f_2\rangle= \langle x^2+\alpha y^s,y^t\rangle$. If $\alpha=0$ or $t\leq s$ then  $\langle x^2+\alpha y^s,y^t\rangle= \langle x^2,y^t\rangle$.\\

(4): Multiplying $\sum\limits_{i\ge t}a_iy^i+x\sum\limits_{j\ge q}b_jy^j$ by the inverse of $\sum\limits_{i\ge t}a_iy^{i-t}$ we may assume that  $f=(x^2, y^t+e\cdot xy^q)$ with $e=\sum\limits_{j\ge q}b_jy^{j-q}$ a unit.
Consider the automorphism $\varphi$ of $K[[x,y]]$ defined by $\varphi(x)=e^{-1}x$ and $\varphi(y)=y$ then
$\varphi(f)=(e^{-2}x^2,y^t+xy^q)\sim (x^2,y^t+xy^q).$\\

(5): Multiplying $\sum\limits_{i\ge t}a_iy^i+x\sum\limits_{j\ge q}b_jy^j$ by the inverse of $\sum\limits_{i\ge t}a_iy^{i-t}$ we may assume that  $f=(x^2+ y^s, y^t+e\cdot xy^q)$ with $e=\sum\limits_{j\ge q}b_jy^{j-q}$ a unit. Let
\begin{center}
$F(z):=z^{s+2q-2t}\sum\limits_{j\ge q}b_jy^{j-q}z^{2(j-q)} - b_q .$
\end{center}
We have 
\begin{center}
$F(1)\in \langle y \rangle K[[y]]$ \\
$F'(1)=(s+2q-2t)\sum\limits_{j\ge q}b_jy^{j-q}+\sum\limits_{j\ge q+1}b_jy^{j-q}.$
\end{center}
Since $b_q\neq 0$ $F'(1)$ is a unit if $s+2q-2t\neq 0$ in $K$. Using Implicite Function Theorem\footnote{We apply the theorem (cf. Theorem 6.2.17 in \cite{GG} ) to the function $G(z):=F(z+1)\in K[[y,z]]$. We have $G(0)\in \langle y \rangle K[[y]]$ and $G'(0)\notin \langle y\rangle K[[y]]$ a unite. The theorem gives the existence of $\tilde z(y) \in \langle y\rangle K[[y]]$ such that $G(\tilde z(y))=0$. Let $z(y):=\tilde z(y)+1$ then $z(y)$ is a unit and $F(z(y))=0$.}
we obtain a unit $z(y)\in K[[y]]$ such that $F(z(y))=0$, i.e.
\begin{center}
$z(y)^{s+2q-2t}\sum\limits_{j\ge q}b_jy^{j-q}z(y)^{2(j-q)} =b_q .$
\end{center}
The map $\varphi$ defined by 
\begin{center}
$\varphi(x)=z(y)^s$ and $\varphi(y)=z(y)^2y$
\end{center}
gives the equivalence
\begin{center}
$(x^2+y^s,y^t+exy^q)\sim (x^2+y^s,y^t+ b_qxy^q).$
\end{center}
Consider the automorphism $\psi$ of $K[[x,y]]$ defined by $\psi(x)=ax$ and $\psi(y)=by$ with units
$a,b \in K[[x,y]]$. Then $\psi(f)=(a^2x^2+ b^sy^s,b^ty^t+b_qab^qxy^q)$.
We have to choose $a$ and $b$
such that 
\begin{itemize}
\item $a^2= b^s$,
\item $b^t=b_qab^q$.
\end{itemize}
We get the equation
$$b^{t-q-\frac{s}{2}}=b_q,$$
which  can be solved if $p\nmid 2t-2q-s \ne 0$.\\

(6): 
If $\alpha=0$, using (4) we may assume that $f=(x^2,y^t+xy^{t-1})$. Now $y^t+xy^{t-1}=(y+\frac{1}{t} x)^t+x^2h$ for a suitable
$h\in K[[x,y]]$. This implies that $\langle x^2,y^t+xy^{t-1}\rangle =\langle x^2,(y+\frac{1}{t} x)^t\rangle$. Using the automorphism $\varphi$ of $K[[x,y]]$ definde by $\varphi(x)=x$ and $\varphi(y)=y-\frac{1}{t} x$ we obtain the result.\\
If $\alpha=1$, using (5) we may assume that $f=(x^2+y^s,y^t+exy^{t-1})$, $n\geq 3$,  for a suitable unit $e\in K[[x,y]]$. Now $y^t+exy^{t-1}=(y+\frac{1}{t} ex)^t+x^2h$ for a suitable
$h\in \frak{m}^{t-2}$. Using the automorphism $\varphi$ of $K[[x,y]]$ definde by $\varphi(x)=x$ and $\varphi(y)=y-\frac{1}{t} ex$ we obtain $f\sim (x^2+(y-\frac{1}{t} ex)^s,y^t+x^2\bar h)$ with $\bar h\in\frak{m}^{t-2}$.
Now $\langle x^2+(y-\frac{1}{t} ex)^s,y^t+x^2\bar h \rangle = \langle x^2+(y-\frac{1}{t} ex)^s,y^t-(y-\frac{1}{t} ex)^s\bar h)\rangle = \langle x^2+y^s+\beta xy^{s-1}+\tilde k_1,y^t+\tilde h_1\rangle= \langle (x+\frac{1}{2}\beta y^{s-1})^2+y^s+\tilde k_1,y^t+\tilde h_1\rangle$ with $\beta\in K$, $\bar k_1,\tilde k_1\in\frak{m}^{s+1}$ and $ \tilde h_1\in\frak{m}^{t+1}$. Using the automorphism $\tilde \varphi$ of $K[[x,y]]$ definde by $\tilde\varphi(x)=x-\frac{1}{2}\beta y^{s-1}$ and $\tilde\varphi(y)=y$ we obtain $f\sim (x^2+y^s+k_1,y^t+h_1)$ with $k_1\in\frak{m}^{s+1}$ and $h_1\in \frak{m}^{t+1}$. 

We may assume that $k_1=x\sum\limits_{i\ge s}a_iy^i +\sum\limits_{i\ge s+1}b_iy^i$ and write
\begin{center}
$x2+y^s+k_1=(x+\frac{1}{2} \sum\limits_{i\ge s}a_iy^i)^2+y^s+\tilde k_1$, $\tilde k_1\in \langle y\rangle^{s+1}$.
\end{center}
Using the map $\psi$ defined by $\psi(x)=x-\frac{1}{2}\sum\limits_{i\ge s}a_iy^i$ and $\psi(y)=y$ we obtain
\begin{center}
$f\sim (x^2+y^s+\tilde k_1,y^t+\tilde h_1)$ with $\tilde h_1\in \frak{m}^{t+1}$.
\end{center}
Now $y^s+\tilde k_1=ey^s$ for a suitable unit $e\in K[[y]]$. The map $\varphi$ defined by $\varphi(x)=\sqrt(e)x$ and
$\varphi(y)=y$ gives the equivalence
\begin{center}
$(x^2+y^s+\tilde k_1,y^t+\tilde h_1)\sim (x^2+y^s,y^t+h)$ for a suitable $h\in\frak{m}^{t+1}$.
\end{center}
We may assume that $h=x\sum\limits_{i\ge t}a_iy^i+\sum\limits_{i\ge t+1}b_iy^i$. This implies that
$y^t+h=\tilde e y$ for a unit $\tilde e\in K[[x,y]]$, i.e. $(x^2+y^s,y^t+h)\sim (x^2+y^s,y^t)$.
\end{proof}

\begin{Proposition}\label{not simple2}

 Let     $\characteristic(K) = p$.
\begin{enumerate}
\item Assume that $p \ne 2$. 
Let $f= (x^2+ \alpha y^s,\sum\limits_{i\ge t}a_iy^i+x\sum\limits_{j\ge q}b_jy^j)$, $\alpha\in\{0,1\}$.\\
If $s\geq 4$, $t\geq 5$ and $q\geq 3$  then
$f$ is not simple.
\item  Assume that $p = 2$. Then $(x^2+axy,h), a\in K$, is not simple for $h\in K[[x,y]]$ with $\order(h)\geq 3$.
\end{enumerate} .
\end{Proposition}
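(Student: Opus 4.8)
The plan is to establish non-simpleness through positive modality (Proposition \ref{prop.modular}): an ICIS is simple if and only if it is $0$-modular, so it suffices to show that every neighbourhood of the relevant jet meets infinitely many orbits. I would do this in (1) by semicontinuity of modality, deforming $f$ to a quasi-homogeneous family carrying a genuine modulus, and in (2) by a direct dimension count adapted to characteristic $2$. The unifying principle in (1) is that, since every deformation of a simple ICIS is simple, it is enough to locate inside the semiuniversal deformation of $f$ a singularity that is itself not simple, certified by a one-parameter family of pairwise non-contact-equivalent ICIS through it.

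For (1) the natural target is the quasi-homogeneous family $G_a:=(x^2+y^4,\,y^5+axy^3)$, which is of type $(4,5;2,1)$ since both $y^5$ and $xy^3$ have weighted degree $5$. First I would verify that $G_a$ is a deformation of $f$: because $s\ge 4$, $t\ge 5$ and $q\ge 3$, the monomials $y^4,y^5,xy^3$ are admissible lower-order deformation directions of the two components, so switching them on after the reductions of Lemma \ref{lemma_degenerate} produces $G_a$ (e.g. $x^2+y^s+ty^4\sim x^2+y^4$ for $t\ne0$). Then I would show that $a$ is a genuine modulus, which is exactly the obstruction in the proof of Lemma \ref{lemma_degenerate}(5): a weight-compatible contact equivalence rescales the coefficient of $xy^q$ by $b^{\,2t-2q-s}$, and for $s=4,t=5,q=3$ one has $2t-2q-s=0$, so the exponent is $0$ and the coefficient is invariant. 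Hence on a nonempty Zariski-open set distinct values of $a$ give non-equivalent ICIS, every neighbourhood of $j_k(G_{a_0})$ meets infinitely many $G_k$-orbits, and $G_{a_0}$ is not $0$-modular; by Proposition \ref{prop.modular} it is not simple, and being a deformation of $f$ it forces $f$ to be not simple.

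For (2) the same scheme is intended in characteristic $2$, and this is where the main obstacle lies, since the characteristic-$\ne 2$ tools all fail. When $a\ne 0$ the form $x^2+axy=x(x+ay)$ factors into distinct linear forms, so a linear change gives $(xy,\tilde h)$ with $\order(\tilde h)\ge 3$, and Proposition \ref{non-degenerate} would formally bring $f$ to $F^{n,m}=(xy,x^n+y^m)$ with $n,m\ge 3$; the subtlety is that such $F^{n,m}$ may fail to be simple in characteristic $2$, because the degree $2=p$ of $xy$ makes the Euler relation fail and breaks the identification of modality with $\dim_K T^1_0$, while $I_2(\mathrm{Jac})$ degenerates (e.g. it vanishes identically for $(xy,x^4+y^4)$, as $x^4+y^4=(x+y)^4$). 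I would therefore argue by a direct dimension count in the spirit of Theorem \ref{th.notsimple}, but on a suitable weighted jet space $X$ rather than the $2$-jet (for which the count does not close when $n=2$): describe the $G'=GL(2,K)\times GL(2,K)$-action on $X$, bound $\dim G'h$ using the enlarged stabilizer of a quasi-homogeneous representative (the weighted $\mathbb{G}_m$-symmetry together with the extra symmetries forced by $p=2$), and show $\dim X>\max_h\dim G'h$, so that $X$ cannot be covered by finitely many orbits and $f$ is not $0$-modular. The degenerate case $a=0$ is addressed separately, noting that for $(x^2,h)$ one has $I_2(\mathrm{Jac})=0$, which forces $\mathfrak m^k\not\subset\langle x^2,h\rangle$, so it is not an ICIS. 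The hardest step is precisely this characteristic-$2$ non-simpleness, where the naive reduction suggests simpleness and one must instead exploit the $p=2$ degeneracies directly.
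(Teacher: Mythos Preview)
Your plan for Part (1) has the right target—the family $G_a=(x^2+y^4,y^5+axy^3)$ is exactly what the paper uses—but the argument that $a$ is a modulus has a genuine gap. Observing that $2t-2q-s=0$ only tells you that the \emph{particular} normalization procedure of Lemma~\ref{lemma_degenerate}(5) (a diagonal rescaling $x\mapsto ax$, $y\mapsto by$) cannot kill the coefficient; it does not show that \emph{no} contact equivalence can. To prove $G_a\not\sim G_b$ for generic $a\ne\pm b$ you must rule out arbitrary $\phi\in Aut(R)$ and arbitrary $U\in GL(2,R)$. Even if you restrict to weight-preserving equivalences (a reduction you do not justify), your description of these as diagonal rescalings is incomplete: for type $(4,5;2,1)$ the substitution $x\mapsto \alpha x+\beta y^2$ is weight-preserving, and so is a matrix entry $U_{21}\in K\cdot y$. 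The paper handles this by a computer-algebra elimination: it writes down a general coordinate change and matrix action up to the relevant (weighted) degree, imposes the equations for $G_a\sim G_b$, and derives $a^2=b^2$ as a necessary condition. The paper also explains why the extra higher-order term $xy^4$ that appears after deforming $f$ can be discarded (because the weighted-degree-5 truncation already forces $m_0=0$), which is a second point your sketch glosses over.

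Your approach to Part (2) contains two concrete errors. First, the assertion that for $(x^2,h)$ one has $I_2(\mathrm{Jac})=0$ ``which forces $\mathfrak m^k\not\subset\langle x^2,h\rangle$, so it is not an ICIS'' is false: the ICIS condition is $\mathfrak m^k\subset I+I_2(\mathrm{Jac})$, so with $I_2(\mathrm{Jac})=0$ it reduces to $\mathfrak m^k\subset I$, and this \emph{does} hold for instance for $I=\langle x^2,y^3\rangle$ (indeed $\mathfrak m^4\subset\langle x^2,y^3\rangle$). Second, for $a\ne 0$ your reduction $x^2+axy=x(x+ay)\sim xy$ is correct, but it leads to $(xy,\tilde h)\sim F^{n,m}$, and $F^{n,m}$ \emph{is} simple in characteristic $2$ (Theorem~\ref{simple-char2}); so your suggestion that ``such $F^{n,m}$ may fail to be simple'' is wrong, and no dimension count of the kind you propose can succeed there. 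The relevant case is $a=0$ (this is how the proposition is actually used, in Lemma~3.1, for $(x^2+h_1,g_2)$ with $h_1\in\mathfrak m^3$), and the paper's argument is to vary the coefficient of $xy$ in the first component—i.e.\ to consider the nearby $3$-jets $(x^2+bxy,\,cxy^2+dy^3)$—and show via an elimination computation in characteristic $2$ that distinct values of $b$ lie in distinct orbits, so every neighbourhood of the $3$-jet meets infinitely many of them.
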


\begin{proof}
The following {\sc Singular} computation shows that 
$(x^2+y^4,y^5+axy^3)$ is not
equivalent to $(x^2+y^4,y^5+bxy^3)$, except $a^2=b^2$. In particular  $(x^2+y^4,y^5+axy^3)$ is not simple for each $a\in K$.
We show below that this implies that $f$ is not simple if $\alpha=1$.
$f$ with $\alpha=0$ is not simple (by Proposition \ref{prop.modular} (2)), since it deforms to $f$ with $\alpha=1$ and $s\ge 4$.\\

{\sc Singular} computation to show that $(x^2+y^4,y^5+axy^3)$ is not simple: 
\begin{verbatim}
ring R=(0,a,b,l,m0,m1,m2,m3,m4,m5,m6,m7,n0,n1,n2,n3,n4,n5,n6,n7,o,s0,s1,
              t,u0,u1,v),(x,y),dp;
int i;
poly f=x2+y4;
poly g=y5+a*xy3;
poly h=x2+y4;
poly k=y5+b*xy3;
map phi=R,m0*y+m1*y2+m2*x+m3*xy+m4*y3+m5*y4+m6*x2+m7*xy2,
                   n0*y+n1*y2+n2*x+n3*xy+n4*y3+n5*y4+n6*x2+n7*xy2;
poly F=jet(phi(f)-(s0+s1*y)*h-t*k,5);
poly G=jet(phi(g)-(u0+u1*y)*h-v*k,5);
// R ist graded with deg(x)=2, deg(y)=1, we compute up to degree 5

matrix M1=coef(F,xy);
matrix M2=coef(G,xy);

ideal I;
for(i=1;i<=ncols(M1);i++){I[size(I)+1]=M1[2,i];}
for(i=1;i<=ncols(M2);i++){I[size(I)+1]=M2[2,i];}

ring S=0,(a,b,l,m0,m1,m2,m3,m4,m5,m6,m7,n0,n1,n2,n3,n4,n5,n6,n7,o,s0,s1,
               t,u0,u1,v,u,z),dp;
ideal I=imap(R,I);
I=I,(m0*n2-n0*m2)*o-1,(s0*v-u0*t)*z-1; //to make matrix and map invertible

matrix M=lift(I,ideal(2*(a2-b2)));
poly p; 
for(i=1;i<=nrows(M);i++){p=p+M[i,1]*I[i];}
p;
2*a^2-2*b^2

\end{verbatim}
Note that the ideal $I$ contains the conditions that  $(x^2+y^4,y^5+axy^3)$ is
equivalent to $(x^2+y^4,y^5+bxy^3)$ and the matrix $M$, computed by {\tt lift}, satisfies $p = M*I$ with $I$ considered as column vector. Printing out $I$ and $M$ we see that the coefficients are integers.  Hence, the equation $p = M*I$ is true as polynomials over $\mathbb Z$ (although the {\sc Singular} computation was done over $\mathbb Q$). This implies that in characteristic different from $2$ we obtain $a^2=b^2$.\\

Let $s\ge 4$, $t\ge 5$ and $q\geq3$.
Set $f_a := (x^2+y^4,y^5+axy^3), a\in K,$ and $F_\lambda := f+\lambda f_a$.  $F_\lambda$ is a deformation
of $f$ and we show that $F_\lambda$ is not simple for almost all   $\lambda$ (i.e. all except finitely many).
Then, by the semicontinuity of the modality (Proposition \ref{prop.modular}), $f$ is not simple. \\ 

$$ F_\lambda=(F_{\lambda_1}, F_{\lambda_2})  =(x^2(1+\lambda)+y^4(\lambda+\alpha y^{s-4}), y^5(\lambda+  \sum_{i\ge t}a_iy^{i-5}) +
xy^3(\lambda a +\sum_{j\ge q}b_jy^{j-3}).$$
For  general $\lambda$ this is equivalent to 
$$(x^2+y^4, y^5+xy^3(ab_0+\sum_{j\ge 1} b'_jy^j)),$$   
for some $b_0\ne 0$. In fact, replace $x$ by $\sqrt{1+\lambda}^{-1}x$, multiply $F_{\lambda,1}$ with  the inverse of $v=(\lambda+\alpha y^{s-4})$ and replace $x$ by $\sqrt{v}x$ and, finally, multipy $F_{\lambda,2}$ with the inverse of  $\lambda+  \sum\limits_{i\ge t-5}a_iy^i$. 
With  $a\ne 0$ the factor of $xy^3$ is a unit and we get 
$(x^2+y^4, y^5+xy^3b_0a+g)$, with $b_0\neq 0$ and  $g=x\sum\limits_{j\ge 4}b'_jy^j$. By the {\sc Singular} computation above $h_0:=(x^2+y^4, y^5+xy^3b_0a)$ is not simple. 

To see that $h_g:=(x^2+y^4, y^5+xy^3b_0a+g)$ is not simple, we cannot use the semicontinuity 
since $h_g$ is a deformation of $h_0$ with a priori smaller modality. But we can prove the non-simpleness in two ways:\\
a) It suffices that the 5-jet of $h_g$ is not simple. This follows from a similar (but longer\footnote{On a MacBook Air with M1 chip about 4,5 h and about 5 sec without the term $xy^4$.}) {\sc Singular} computation as above, which shows that
$(x^2+y^4,y^5+axy^3+cxy^4)$ is not equivalent to $(x^2+y^4,y^5+bxy^3+dxy^4)$, except $a^2=b^2$
for arbitrary $c,d$. \\
b) We notice that $h_0$ is quasi-homogeneous of type $(4,5;2,1)$  and it suffices to show that the 
weighted 5-jet of $h_g$ is not simple. We make an ''Ansatz" with a general (not quasi-homogeneous) coordinate transformation up to weighted degree 5, apply it to $(f,g)_{a,c}=(x^2+y^4,y^5+axy^3+cxy^4)$ and compare it with $(h,k)=:(f,g)_{b,d}$:
\begin{verbatim}
map phi=R,m0*y+m1*y2+m2*x+m3*xy+m4*y3+m5*y4+m6*x2+m7*xy2,
                   n0*y+n1*y2+n2*x+n3*xy+n4*y3+n5*y4+n6*x2+n7*xy2;
poly F=jet(phi(f)-(s0+s1*y)*h-t*k,5);
poly G=jet(phi(g)-(u0+u1*y)*h-v*k,5);
\end{verbatim}

Then $(f,g)_{a,c} \sim (f,g)_{b,d}$ implies $F=G=0$. Looking at $F$, we see that $\tt m0$ must be 0 and hence $\tt phi(cxy^4)$ has weighted degree bigger than 5. Therefore  $cxy^4$ can  be deleted and the first {\sc Singular} computation above suffices to show that $h_g$ is not simple.\\

In characteristic $2$ the following {\sc Singular} computation shows that 
$(x^2+axy, cxy^2+dy^3) \sim (x^2+bxy, cxy^2+dy^3)$ implies 
$a^4b^2c^4d+a^2b^4c^4d+a^4c^2d^3+b^4c^2d^3=0$, which happens, for
fixed $a,c,d$,  only for finitely many $b$.\\
It follows that $\langle x^2+axy, j_3(h) \rangle = \langle x^2+axy, cxy^2+dy^3\rangle$, $c,d$ suitable, is not simple.   Hence  $(x^2+axy, h)$ is not simple.
\end{proof}

\begin{verbatim}
ring R=(2,a,b,c,d,m0,m1,n0,n1,o,s,t,u,v),(x,y),dp;
int i;
poly f=x2+a*xy;        //a the parameter of the family
poly g=c*xy2+d*y3;     //generic element of degree 3
poly h=x2+b*xy;
poly k=c*xy2+d*y3;
map phi=R,m0*y+m1*x,n0*y+n1*x;
poly F=phi(f)-s*h-t*k;
poly G=phi(g)-u*h-v*k;

matrix M1=coef(F,xy);
matrix M2=coef(G,xy);

ideal I;
for(i=1;i<=ncols(M1);i++){I[size(I)+1]=M1[2,i];}
for(i=1;i<=ncols(M2);i++){I[size(I)+1]=M2[2,i];}

ring S=2,(a,b,c,d,m0,m1,n0,n1,o,s,t,u,v,u,z),dp;
ideal I=imap(R,I);
I=I,(m0*n1-n0*m1)*o-1,(s*v-u*t)*z-1;      

ideal F=eliminate(I,m0*m1*n0*n1*o*s*t*u*v*o*z);
F;
> F[1]=a^4*b^2*c^4*d+a^2*b^4*c^4*d+a^4*c^2*d^3+b^4*c^2*d^3
\end{verbatim}
Simple complex analytic isolated complete intersection singularities (ICIS) were classified by M. Giusti \cite{Giu83}. The following list of normal forms appears also in Giusti's list of 0-dimensional ICIS, but we get some extra normal forms for special characteristics. The notations follow basically those of M. Giusti \cite{Giu83}, where the upper index is the classical form of Giusti.
 
 \begin{Proposition}\label{main}
 If $p\ne 2$, then the following ICIS are the only candidates for being simple:
 \begin{enumerate}
 \item $F^{s,m}: \text{   } (xy,x^s+y^m)$, $s,m\geq 2$,
 \item $G_5^0: \text{   } (x^2,y^3)$,
   \item $G_5^1: \text{   } (x^2,xy^2+y^3)$ in $\characteristic p=3$,
 \item $G_7: \text{   } (x^2,y^4)$,
  \item $H_{s+3}: \text{   } (x^2+y^s,xy^2)$, $s\geq 3$, 
  \item $I_{2t-1}^0: \text{   } (x^2+y^3,y^t)$, $t\geq 4$,
 \item $I_{2t-1}^1: \text{   } (x^2+y^3,y^t+xy^{t-1})$, $t\geq 4$ if $p\mid t$,
 \item $I_{2q+2}^0: \text{   } (x^2+y^3,xy^q)$, $q\geq 3$,
\item $I_{2q+2}^1: \text{   } (x^2+y^3,xy^q+y^{q+2})$, $q\geq 3$ if $p\mid 2q+3$. 
 \end{enumerate}
 \end{Proposition}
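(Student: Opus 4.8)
\section*{Proof proposal}

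The plan is to feed the general reduction through the normal-form machinery already set up. By Theorem \ref{th.notsimple} a simple ICIS forces $n=2$ and $\order(f)=2$, so I work with $f=(f_1,f_2)\in I_{2,2}$, $f_i\in\mathfrak m^2$, in $R=K[[x,y]]$. Since contact equivalence permits replacing $f$ by $Uf$ with $U\in GL(2,R)$, the first dichotomy is whether some $K$-linear combination $c_1j_2(f_1)+c_2j_2(f_2)$ is a non-degenerate quadratic form. If so, after applying such a $U$ we land in Proposition \ref{non-degenerate}, yielding $f\sim(xy,x^s+y^m)=F^{s,m}$, which is case (1). The remaining \emph{degenerate case} is where every combination of the two quadratic parts is degenerate, and all further work takes place there.

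In the degenerate case I apply the splitting Lemma \ref{splitting lemma} (recall $p\ne2$) to a nonzero quadratic part and then Lemma \ref{lemma} to reach
\[
f\sim\Bigl(x^2+\alpha y^s,\ \sum_{i\ge t}a_iy^i+x\sum_{j\ge q}b_jy^j\Bigr),\qquad \alpha\in\{0,1\},\ s\ge 3.
\]
Here I first pin down $t\ge 3$ and $q\ge 2$: a surviving $a_2y^2$ (i.e.\ $t=2$) or $b_1xy$ (i.e.\ $q=1$) combines with $x^2$ to a non-degenerate form ($x^2+a_2y^2$, resp.\ $b_1xy$), contradicting the degenerate case. Next, Proposition \ref{not simple2}(1) kills $s\ge4,\ t\ge5,\ q\ge3$; the same deformation argument (adding a $y^4$, a $y^5$, or an $xy^3$ term to push $(s,t,q)$ into the non-simple region and invoking semicontinuity of the modality, Proposition \ref{prop.modular}(2)) also disposes of the boundary situations where the pure $y$-part or the $x$-part is absent, i.e.\ $t=\infty$ or $q=\infty$. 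Hence \emph{every} candidate for simpleness must satisfy $s=3$, or $t\le4$, or $q\le2$.

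The remaining work is the finite case analysis of Lemma \ref{lemma_degenerate}, organized by which parts of $f_2$ survive and by the comparison of $t$ and $q$. When only one part survives (cases (1)--(3)) the form is $(x^2+\alpha y^s,y^t)$ or $(x^2+y^s,xy^q)$; imposing the candidate restriction and absorbing $y^s$ whenever $t\le s$ (resp.\ $\alpha=0$) leaves exactly $(x^2,y^3)=G_5^0$, $(x^2,y^4)=G_7$, $(x^2+y^3,y^t)=I_{2t-1}^0$ for $t\ge4$, $(x^2+y^s,xy^2)=H_{s+3}$ for $s\ge3$, and $(x^2+y^3,xy^q)=I_{2q+2}^0$ for $q\ge3$. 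When both parts survive with $t>q$ (cases (4)--(6)) I reduce the surplus monomials: for $\alpha=0$, $q=2$, $t\ge4$ the transformation $\varphi(x)=x-y^{t-2}$ turns $(x^2,y^t+xy^2)$ into $\langle x^2+y^{2t-4},xy^2\rangle=H_{2t-1}$, while the genuinely two-term forms with $s=3$ give the $I^1$-series. The characteristic conditions appear exactly where a normalization divides by an integer: the power-completion in Lemma \ref{lemma_degenerate}(6) needs $p\nmid t$, so $(x^2,xy^2+y^3)$ and $(x^2+y^3,y^t+xy^{t-1})$ remain irreducible only for $p=3$ (giving $G_5^1$) and $p\mid t$ (giving $I_{2t-1}^1$); and for $(x^2+y^3,xy^q+y^{q+2})$ a further scaling in the spirit of Lemma \ref{lemma_degenerate}(5) removes $y^{q+2}$ unless $p\mid 2q+3$, giving $I_{2q+2}^1$ precisely in that case.

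I expect the main obstacle to be not any single reduction but the exhaustive bookkeeping of this last step: one must check that the restriction $s=3\,\vee\,t\le4\,\vee\,q\le2$ together with the reductions of Lemma \ref{lemma_degenerate} collapses \emph{all} surviving tuples $(s,t,q,\alpha)$ onto the nine listed forms with no omissions and no leftover moduli. The delicate points are the two-term forms with $q=2$ and $\alpha=1$, which must be shown to fall back on the $G$-, $H$-, or $I^1$-series, and the uniform tracking of the divisibility thresholds $t$ and $2q+3$, which governs whether the extra monomials $xy^{t-1}$ and $y^{q+2}$ can be eliminated and hence whether a genuinely new characteristic-dependent normal form occurs — a point that goes slightly beyond Lemma \ref{lemma_degenerate} and requires the additional normalization computations signalled by those integers.
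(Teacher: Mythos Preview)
Your proposal is correct and follows essentially the same strategy as the paper: the non-degenerate/degenerate split via Proposition~\ref{non-degenerate} and Lemma~\ref{lemma}, the constraint $s=3$ or $t\le4$ or $q\le2$ from Proposition~\ref{not simple2}, and the exhaustive case analysis through Lemma~\ref{lemma_degenerate} with the characteristic thresholds entering exactly where you say. Your upfront elimination of $t=2$ and $q=1$ via the ``every $K$-linear combination of $2$-jets is degenerate'' dichotomy is a mild streamlining---the paper instead carries these values into the case analysis and watches them collapse to $F$-types in sub-cases such as (ii.1) and (ii.2.1)---but otherwise the route, the delicate $q=2,\ \alpha=1$ sub-case you single out, and the bookkeeping obstacles you anticipate are precisely those of the paper.
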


 \begin{proof}
 Let $f=(f_1,f_2)$ be an ICIS.
 
(I) If $j_2(f_i)$ is non-degenerate for some $i$ we obtain $f\sim F^{s,m}$  for suitable $s,m\geq 2$, using Proposition \ref{non-degenerate}.\\
 
(II) If $j_2(f_i)$ is degenerate for some $i$ we obtain
 using Lemma \ref{lemma} that 
 \begin{center}
 $f\sim (x^2+\alpha y^s, \sum\limits_{i\ge t}a_iy^i+x\sum\limits_{j\ge q}b_jy^j), \ a_i,b_j\in K,$\\
with $s\ge 3$, $t\ge2$, $q \ge1$ and $\alpha\in\{0,1\}$.
\end {center}
Using Proposition \ref{not simple2} and that ord$(f) =2$, we obtain that if
 $f$ is simple, then
 \begin{center}
 $ s=3$ \ or \ $2\le t\leq 4$ \ or  \ $1\le q\leq 2$.\\
  \end{center}
 Now we use these inequalities and Lemma \ref{lemma_degenerate}.
 \begin{enumerate}
\item  Let $a_i=0$ for all $i$ and $b_q\neq 0$ (this we may assume since $f$ is a complete intersection). Then $f\sim (x^2+y^s,xy^q)$ ($\alpha=1$ since $f$ is a complete intersection). For $f$ to be simple we  have $s=3$ and then we get  $I^0_{2q+2}$ or we have $q=2$ and then we get $H_{s+3}$. 
\item Let $b_j=0$ for all $j$ and $a_t\neq 0$.  Then $f\sim (x^2+\alpha y^s,y^t)$.\\
$-$ If $\alpha=0$ or $t\leq s$ then $f\sim (x^2,y^t)$. Then  $t$ must be $2$, $3$ or $4$.
For $t=2$ we get $(x^2,y^2) \sim (xy,x^2+y^2) =  F^{2,2}$. For $t=3, 4$ we obtain $G^0_5, G_7$.\\
$-$ If $\alpha=1$ and $t>s$ then $s=3$, and we obtain $ I^0_{2t-1}$.
\item  Assume now that $a_tb_q\neq 0$.\\
  Lemma \ref{lemma_degenerate} implies:
 \begin{enumerate}
\item If $t\leq q$ then $f\sim (x^2+\alpha y^s,y^t)$.\\
$-$ If $\alpha=0$ or $t\leq s$ then $f\sim (x^2,y^t)$. We have $2\le t\leq 4$ and  we obtain  $F^{2,2}$ or $G^0_5$ or $G_7$.\\
$-$ If $\alpha=1$ and $t>s$  then $s=3$, and we obtain $I^0_{2t-1}$.
\item  If $t>q$ then,
 by  Lemma \ref{lemma_degenerate} (4) and (5) and
using the automorphism $\varphi $ of $K[[x,y]]$ defined by $\varphi(x) = e^{-1}x$ and $\varphi(y) = y,$ we obtain
\begin{alignat*}{5}
f&\sim  (x^2,y^t+xy^q), \text{ if } \alpha =0,\\
f&\sim  (x^2+y^s,y^t+exy^q)\sim  (x^2+e^2y^s,y^t+xy^q), \text{ if }   \alpha =1,
\end{alignat*}
where $e\in K[[y]]$ a unit with $e=1$ if  $2t-2q-s\ne 0$ and ($p=0$ or $p\nmid 2t-2q-s$).

\begin{enumerate}
\item Let $t=q+1$. \\
(i.1)  If $p\nmid t$, then $f\sim (x^2+\alpha y^s,y^t)$, by Lemma \ref{lemma_degenerate} (6).\\
\text{   } $-$  If $\alpha=0$ or $t\leq s$ then $f\sim (x^2,y^t)$,  $2\le t\leq 4$, and  we obtain  
$F^{2,2}$ or $G^0_5$ or $G_7$.\\
\text{   } $-$ If $\alpha=1$ and $t>s$  then $s= 3$, and we obtain $ I^0_{2t-1}$.\\
(i.2)  If $p\mid t$, then
 $f\sim (x^2+\alpha y^s,y^t+exy^{t-1})$  with $e\in K[[y]]$ a unit.  \\
\text{   } $-$ If $\alpha=0$ then $p=3$ and $t=3$ and 
by Lemma \ref{lemma_degenerate} (4) we get $G^1_{5}$. \\
\text{   } $-$ If $\alpha=1$ then either $s=3$ or ($t=3$ and $p=3$). \\  
(i.2.1) For the first case ($s=3$) we obtain
from Lemma \ref{lemma_degenerate} (5) 
 \begin{center}
  $f\sim   (x^2+ e^2y^3,y^t+xy^{t-1}),$
 \end{center} 
 (note that $p\mid t$ implies $p\nmid 2t-2q-s$ since $s=3$).\\
If $p=t=3$ we get $G^1_{5}$.
If $t\ge 4$ we get   $I^1_{2t-1}$.\\
(i.2.2)  To see the second case ($s>3$, $t=3$ and $p=3$) we start with
$f=(f_1,f_2)\sim  (x^2+e^2y^s,y^3+xy^2).$  Substracting $e^2y^{s-3}f_2$ from $f_1 $
we get
 \begin{center}
  $\langle x^2+e^2y^s,y^3+xy^2)\rangle = \langle x^2-e^2xy^{s-1},y^3+xy^2\rangle$ \\
  $ =  \langle (x-\frac{1}{2}e^2 y^{s-1})^2-\frac{1}{4} e^4y^{2s-2},y^3+xy^2\rangle$. 
\end{center}
Using the automorphism  
$\varphi(x)=x+\frac{1}{2}e^2 y^{s-1}$ and $\varphi(y)=y$ 
we obtain 
\begin{center}
$ (x^2-\frac{1}{4}e^4 y^{2s-2},y^3+xy^2+\frac{1}{2} e^2y^{s+1}) = 
  (x^2-\frac{1}{4}e^4 y^{2s-2},uy^3+xy^2) $
   \end{center}
  with $u=1+\frac{1}{2} e^2y^{s-2}$ a unit. \\
  Setting $\varphi(x)=ux$ and $\varphi(y)=y$ we obtain 
\begin{center}
$(u^2x^2-\frac{1}{4}e^4 y^{2s-2},uy^3+uxy^2)$
 $\sim (x^2-\frac{1}{4} u^{-2}e^4y^{2s-2},y^3+xy^{2}),$ 
 \end{center}
 which raises the power $s$ of $y$ to $2s-2$.
Iterating this process
 we obtain  $f\sim (x^2,y^3+xy^2)$
which is $G^1_5$.\\

\item  Now assume that $t>q+1$.\\ 
(ii.1) If  Let $\alpha=0$. Then Proposition \ref{not simple2} implies $q\leq 2$.\\
\text{   } $-$ If $q=1$, we obtain $(x^2,y^t+xy)$. Similar as below with $q=2$   this is equivalent to $(x^2+y^{2t-2},xy)$, which is $F^{2,2t-2}, t\ge 2$.\\
\text{   } $-$  If $q=2$ we obtain $(x^2,y^t+xy^2)$. But 
\begin{center}
 $(x^2,y^t+xy^2)=(x^2,y^2(y^{t-2}+x))$
 $\sim ((x-y^{t-2})^2,xy^2)$\\
 $=(x^2-2xy^{t-2}+y^{2t-4},xy^2)\sim (x^2+y^{2t-4},xy^2)$, since $t \ge 4$. 
 \end{center}
We get $H_{2t-1}$.\\
 
(ii.2) Let $\alpha=1$. Using 3.(b) above we can start with $(x^2+y^s,y^t+exy^q)$, with $e=e(y) = e_0 + e_ky^k + $ higher terms, where $e_0$ and $e_k$ are non-zero.\\
Then Proposition \ref{not simple2} implies 
 $s=3$ \ or \ $2\le t\leq 4$ \ or  \ $1\le q\leq 2$.\\
(ii.2.1)  Let $s\ge 3$ and $q\le 2$\\
\text{   } $-$ Let $q=1.$ Set $m=:\min\{s,2t-2\}$ if $s\neq 2t-2$ or if $s = 2t-2$ and $e^{-2}(0) \ne -1$. If $s = 2t-2$ and $e^{-2}(0) = -1$ set $m=:s+k$.

We get \\
 $(x^2+y^s,y^t+exy)\sim (x^2+y^s,y(e^{-1}y^{t-1}+x))$\\
 $ \sim ((x-e^{-1}y^{t-1})^2 +y^s,xy) \sim (x^2+e^{-2}y^{2t-2} +y^s,xy)$\\
 $ \sim (x^2+y^mu, xy)\sim (x^2u^{-1}+y^m,xy)$, $u(y)$ a unit.\\
 We get $F^{2,m}$ since $p\ne 2$. If $s=3$ we get $F^{2,3}$.\\
\text{   } $-$  Let $q=2$. Consider $(x^2+y^s,y^t+exy^2)$, 
$e(y)= e_0 + e_ky^k + $ higher terms,  a unit. 
Set $m=:\min\{s,2t-4\}$ if $s\neq 2t-4$ or if $s = 2t-4$ and $e^{-2}(0) \ne -1$. If $s = 2t-4$ and $e^{-2}(0) = -1$ set $m=:s+k$.\\
We get  $(x^2+y^s,y^t+exy^2)\sim (x^2+y^m,xy^2)$, i.e. $H_{m+3}$. 
\\ This follows from 
\begin{center}
$(x^2+y^s,y^t+exy^2)=(x^2+y^s,y^2(e^{-1}y^{t-2}+x))\sim ((x-e^{-1}y^{t-2})^2+y^s,xy^2)$
  $=(x^2-2e^{-1}xy^{t-2}+e^{-2}y^{2t-4}+y^s,xy^2)\sim (x^2+e^{-2}y^{2t-4}+y^s,xy^2)\sim (x^2+uy^m,xy^2)\sim
(u^{-1}x^2+y^m,xy^2)$, $u$ a unit, 
 $ \sim ((vx)^2+y^m,xy^2)$ $(v^2=u^{-1}) \sim (x^2+y^m,v^{-1}xy^2)$
 $\sim (x^2+y^m,xy^2)$.
  \end{center}
(ii.2.2) Let $s=3$ and $t\geq q+3$. We have
  $$y^t+exy^q=y^{t-3}(x^2+y^3)+(e-xy^{t-q-3})xy^q,$$
with $(e-xy^{t-q-3})$  a unit. Hence $(x^2+y^3,exy^q+y^t)\sim (x^2+y^3,xy^q)$.\\
(ii.2.3)  Let $s=3$ and $t=q+2$. \\
We consider $(f,g)=(x^2+y^3,xy^q+ey^{q+2})$ , using Lemma \ref{lemma_degenerate} (5), we
obtain $(f,g)\sim (x^2+y^3,xy^q+y^{q+2})$ since $2t-2q-s=1$.
 \end{enumerate}
 \end{enumerate}
 \end{enumerate}
\end{proof}

\begin{Theorem}\label{simple-table} Let $p=$ char$(K) \ne 2$. 
	An ICIS $f \in K[[x,y]]^2$ is simple iff it is contact equivalent to one of the normal forms in Table 2.\\
	$$
\begin{tabular}{|l |l l| }
	\hline
	     &&  \\[-2ex]
	Type & Normal form of $f$   & \\
	      & &  \\[-2ex]
	\hline
	        && \\[-2ex]
{$ F^{m,n}$,  \small{$m,n\ge 2$}}&{$ \hspace*{10mm}  (xy, x^m+y^n) $} &\\
       & & \\[-2ex]
	\hline
        && \\[-2ex]        {$ G_5$}&{$ G_5^0  \hspace*{5mm}  (x^2, y^3)$ }   and additionally  &\\
        && \\[-2ex]
		   & $ G_5^1   \hspace*{5mm}    (x^2, xy^2+y^3)$  \small{if $p=3$} &\\
        && \\[-2ex]
	\hline
	        & \\[-2ex]
	$ G_7$&$ \hspace*{10mm}  (x^2, y^4) $ &\\
       & & \\[-2ex]
	\hline      
	 & \\[-2ex]
	$ H_{n+3}$,  \small{$n\ge 3$}&$\hspace*{10mm} (x^2+y^n, xy^2) $& \\
        && \\[-2ex]	
	\hline       
	 & \\[-2ex]	
$ I_{2t-1}$, \small{ $t\ge 4$}&$ I_{2t-1}^0 \ (x^2+y^3, y^t) $ and additionally &\\
        && \\[-2ex]
       			& $ I_{2t-1}^1 \ (x^2+y^3, y^t+xy^{t-1}) $ \small{if $p\mid t$} &\\
        && \\[-2ex]
       		\hline
        && \\[-2ex]
	$ I_{2q+2}$, \small{$q\ge 3$} &$ I_{2q+2}^0 \ (x^2+y^3, xy^q) $ and additionally  &  \\
        && \\[-2ex]	
 &$  I_{2q+2}^1 \ (x^2+y^3, xy^q+y^{q+2})$  \small{if $p\mid 2q+3$} &\\ 
 [3pt] 
       	\hline
	\end{tabular} 
	$$
\centerline{\it Table 2}
\end{Theorem}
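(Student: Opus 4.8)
The statement combines two claims: that every simple ICIS is contact equivalent to a normal form in Table 2 (the ``only if'' direction), and that each normal form listed is indeed simple (the ``if'' direction). The plan for the ``only if'' direction is simply to collect the reductions already assembled. By Theorem \ref{th.notsimple} together with the implicit function theorem, a simple ICIS $f=(f_1,\ldots,f_n)\in I_{n,n}$ must have $n=2$ and $\order(f)=2$; and by Proposition \ref{main} every such $f$ with $p\ne 2$ is contact equivalent to one of the listed normal forms, because Table 2 coincides verbatim with the candidate list produced there. Hence no separate argument is needed and the ``only if'' direction is precisely the content of Proposition \ref{main}.

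The real work is the ``if'' direction. For a fixed normal form $f$ in Table 2 I would verify Definition \ref{def.simple} directly. First I would compute $\tau^{sec}(f)$ and a monomial $K$-basis $g_1,\ldots,g_d$ of $T^{1,sec}(f)$, producing the explicit semiuniversal deformation with section $F_{\bf t}=f+\sum_i t_i g_i$. Since every listed normal form is quasi-homogeneous (or becomes so after the reductions of Lemma \ref{lemma_degenerate}), Proposition \ref{Merle} allows me to discard all deformation terms of large weighted order, reducing the analysis to a finite-dimensional weighted jet and hence to finitely many monomial directions. I would then run the explicit coordinate-change reductions of Proposition \ref{non-degenerate} and Lemmas \ref{lemma} and \ref{lemma_degenerate} on $F_{\bf t}$, treating the parameters ${\bf t}$ as coefficients, to show that on a Zariski-open neighborhood $U$ of $0$ each $F_{\bf t}$ is contact equivalent either to a normal form of Table 2 or to a lower-order (type $A_k$) ICIS, in every case with $\tau(F_{\bf t})\le \tau(f)$ by the semicontinuity of Proposition \ref{prop.semicont}.

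To conclude, I would observe that only finitely many normal forms of Table 2 (together with finitely many $A_k$) have Tjurina number at most $\tau(f)$; taking these as the set $\{h_1,\ldots,h_l\}$ of Definition \ref{def.simple} shows that $f$ is simple. Equivalently, the entire ``if'' direction can be organized as an induction on $\tau$: one proves that the list is closed under deformation, and then by Proposition \ref{prop.modular} and the finiteness of contact classes with bounded Tjurina number each member is $0$-modular, the base cases being the smallest-$\tau$ forms, for which the semiuniversal deformation is trivial or meets only the smooth fiber.

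The step I expect to be the main obstacle is establishing this closure \emph{uniformly} in the parameters, i.e.\ ruling out that some $F_{\bf t}$ falls into a modular (non-simple) stratum. Concretely, one must show that no deformation of a listed normal form meets a family such as $(x^2+y^4,\,y^5+axy^3)$ from Proposition \ref{not simple2}: the reductions of Lemma \ref{lemma_degenerate} have to be carried out algebraically on a Zariski-open subset of the ${\bf t}$-space, with the locus where they degenerate being Zariski-closed and contributing no new contact classes, and one must check that the excluded non-simple shapes of Proposition \ref{not simple2} (those with $s\ge 4$, $t\ge 5$ and $q\ge 3$ simultaneously, or with $\order\ge 3$) are never reached on $U$. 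This is exactly where the special small-characteristic normal forms $G_5^1$, $I_{2t-1}^1$ and $I_{2q+2}^1$ must be accounted for, since the hypotheses $p\mid t$ and $p\mid 2q+3$ are precisely the cases in which the generic reduction of Lemma \ref{lemma_degenerate} (parts (5) and (6)) fails and an extra surviving orbit appears.
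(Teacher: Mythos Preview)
Your plan is correct and matches the paper's approach: the ``only if'' direction is exactly Proposition~\ref{main}, and for the ``if'' direction the paper likewise computes an explicit monomial basis of $T^{1,sec}(f)$ for each normal form, writes out $F_{\bf t}=f+\sum t_ig_i$, and then runs a direct case analysis on the parameters (using Proposition~\ref{non-degenerate} and Lemma~\ref{lemma_degenerate}) to show every $F_{\bf t}$ lands in one of finitely many listed classes, invoking semicontinuity of $\tau^{sec}$ to bound the $F^{p,q}$ that can occur. Two small remarks: Proposition~\ref{Merle} is not needed here, since $T^{1,sec}(f)$ is already finite-dimensional for an ICIS and the $g_i$ form a finite list from the outset; and the paper organizes the ``if'' direction as a straightforward exhaustive case split for each normal form rather than as an induction on $\tau$, though your inductive framing would yield the same verification.
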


\begin{proof} By Proposition \ref{main} we have only to show that the normal forms $f$ in Table 2 are simple.
By Definition \ref{def.simple} we have to consider the semiuniversal deformation (with section) of $f=(f_1,f_2)$,
	$$F_{\bf t}({\bf x}):=F({\bf x}, t_1, \ldots, t_d)=f({\bf x})+\sum\limits_{i=1}^d t_i g_i({\bf x}),$$
where $g_1,...,g_d$ is a $K$-basis of 
$$T^{1,sec}(f)= \frak m K[[x,y]]^2/\langle f_1, f_2\rangle K[[x,y]]^2+\mathfrak{m}\langle {\partial f}/{\partial x},{\partial f}/{\partial y}\rangle.$$
 $f$ is simple if the set $\{F_{\bf t}({\bf x}), {\bf t} \in K^d\}$ decomposes into only finitely many contact classes. Moreover, by the argument at the beginning of Section \ref{sec:2} we need to consider only those $g_i$ with $\order(g_i) \ge2$
 (if $\order(g_i)=1$ for some $i$ we get an $A_k$ hypersurface singularity with bounded $k$).\\

To determine the  bases $B$ of $T^{1,sec}(f)$, we use two methods. For fixed characteristic $p$ the basis can be computed directly  with {\sc Singular}. In case that infinitely many $p$ are involved, we often use {\sc Singular} to compute $B$ for several fixed $p$ in order to get an idea of the final form. For a general proof we then do elementary (but sometimes tedious) transformations to get the final list of generators, for which we then prove linear independence. In the following we present only some of these transformations as an illustration, otherwise the paper would be too long.
\begin{enumerate}
\item $  F^{m,n}$,  $m,n\ge 2$ \\
$ \bullet~  {p\nmid m}~\textit{ or } { p\nmid n}:  (x,0)$, $ (y,0),$ 
$ (0,x),$  \ldots, $(0, x^{m-1}),$
$(0, y),$ \ldots, $(0, y^{n-1}).$\\
Let us assume that $p\nmid m$. 
We have $T^{1,sec}(F^{m,n})=\langle x, y\rangle R^2/D,$
where $D$ is the submodule generated by the following elements
$$(xy,0), (x^m+y^n,0), (0,xy), (0, x^m+y^n), (xy, mx^m), (x^2,nxy^{n-1}), (y^2, mx^{m-1}y).$$
We do some transformations to get 
$$(x^2,0)=(x^2,nxy^{n-1})-ny^{n-2}(0,xy)\in D,$$
$$(y^2,0)=(y^2,mx^{m-1}y)-mx^{m-2}(0,xy)\in D,$$
and $(xy,0)\in D$. Moreover, we  have
$$(0, x^m)=\frac{1}{m}(xy, mx^m)-\frac{1}{m}(xy, 0)\in D,$$
$$(0, y^n)=(0, x^m+y^n)-(0, x^m)\in D, $$
and $(0, xy)\in D.$ This implies $B$ is a set of generators of $T^{1,sec}(F^{m,n})$.

On the other hand, to see linear independence, assume that 
$$a_1(x,0)+a_2(y,0)+c_1(0,x)+\cdots +c_{m-1}(0, x^{m-1})+d_1(0,y)+\cdots+d_{n-1}(0, y^{n-1})\in D$$
where $a_1, a_2, c_1,\ldots, c_{m-1}, d_1,\ldots, d_{n-1}\in K$. Then there are $l_1, l_2,l_3,l_4,l_5,l_6, l_7\in R$ such that
\begin{align*}
&a_1(x,0)+a_2(y,0)+c_1(0,x)+\cdots +c_{m-1}(0, x^{m-1})+d_1(0,y)+\cdots+d_{n-1}(0, y^{n-1})\\ \notag
&=  l_1(xy,0)+l_2(x^m+y^n,0)+l_3 (0,xy)+l_4 (0, x^m+y^n)+l_5 (xy, mx^m)+\\ \notag
& +l_6(x^2,nxy^{n-1}) + l_7(y^2, mx^{m-1}y).\notag
\end{align*}
This implies
$$a_1x+a_2y=(l_1+l_5+l_8)xy+l_2(x^m+y^n)+l_6x^2+l_7y^2$$
and
\begin{align*}
& c_1x+\cdots+c_{m-1}x^{m-1}+d_1y+\cdots +d_{n-1}y^{n-1}=l_3xy+l_4(x^m+y^n)+ml_5x^m+\\
+ & nl_6xy^{n-1}+ ml_7x^{m-1}y.
\end{align*}
Since $m,n\ge 2$ the first equation implies $a_1=a_2=0$ and from the second equation we get that the remaining coefficients are zero. 
 
$ \bullet~ p\mid m, p\mid n  \textit{ and } m\ge n: $
 $ (x,0), (y,0) ,(0,x), \ldots, (0, x^{m}), $  $ (0, y), \ldots, (0, y^{n-1}). $\\
The proof is similar to the previous case.
%

$  F^{m,n}$ is of quasi-homogeneous type {$ (m+n, mn; n, m) $}.
\item 	{$ G_5^0 $}\\
$ \bullet~ p=3: (x,0), (y,0), (y^2, 0), $ 
	{$(0,x), (0, y), (0,xy),$} $ (0, y^2), (0, xy^2).$ \\
As mentioned above, the basis can be computed by using {\sc Singular}, since the characteristic $p=3$ is fixed.

	 $\bullet~ p\ge 5: (x,0), (y,0), (y^2, 0), $ 
	{$(0,x), (0, y), (0,xy),  (0, y^2)$.} 
	
We have
$T^{1, sec}(G^0_5)=\langle x,y\rangle R^2/D,$ where
$$D=\left\langle(x^2,0), (y^3,0),  (xy,0), (0, x^2), (0,xy^2), (0,y^3)\right\rangle.$$
Clearly $B$ is a set of generators of $T^{1, sec}(G^0_5)$.
The proof that $B$ is linear independent is easy.

\medskip	

        $ G_5^1 $ \\
$  (x,0), (y,0), (y^2, 0), $ 
	{$ (0,x), (0, y), (0,xy)$}, $(0, y^2)$.
	
	\smallskip

$ G_5$ is 	of quasi-homogeneous type {$ (2,3; 1, 1) $}.

\item 	$ G_7$\\
$ (x,0), (y,0), (y^2, 0), $
		{$(y^3,0), (0, x), (0,y),$}
	$(0, xy), (0, y^2), (0, xy^2), (0, y^3) $.

We have 
$T^{1, sec}(G_7)=\langle x,y\rangle R^2/D,$ where
$$D=\left\langle(x^2,0), (y^4,0),  (xy,0), (0, x^2), (0,xy^3), (0,y^4)\right\rangle.$$
Clearly $B$ is a set of generators of $T^{1, sec}(G_7)$. The proof of linear independence is again easy.

	
\smallskip

$ G_7$ is 	of quasi-homogeneous type {$ (2,4; 1, 1) $}.
	
\item 	$ H_{n+3} $, {$n\ge 3$} \\
 $(x,0), (y,0), \ldots, (y^{n-1}, 0), $\
	$(0,x), (0,y), (0,xy),  $ 
		$ (0,y^2), (0, y^3)  $.\\
We omit the proof, since it is similar to the above (but a bit more lengthy).		

$ H_{n+3} $ is 	of quasi-homogeneous type {$ (2n,n+4; n, 2) $}.
		
\item a) $ I_{2t-1}^0$,  $t\ge 4$.
We have $T^{1,sec}(I_{2t-1}^0)=\langle x, y\rangle R^2/D,$
where $D$ is the submodule generated by the following elements
$$(x^2+y^3,0), (y^t,0), (0, x^2+y^3), (0, y^t), (x^2,0), (xy,0), (3xy^2, txy^{t-1}).$$
$ \bullet ~ p\nmid t:  (x, 0), (y, 0), (y^2,0), $
	 $ (0,y), \ldots, (0,y^{t-1}),  (0,x)$, $(0, xy), \ldots, (0, xy^{t-2}) $ \\
We have $(x^2,0)\in D$, $(y^3,0)\in D$ and $(xy,0)\in D$. Moreover, $(0, y^t)\in D$, $(0, xy^{t-1})\in D$.

We omit the proof for the following cases.\\
$ \bullet ~ p\mid t: $ $(x,0), (y,0), (y^2,0),  (0,y), \ldots, (0, y^{t-1}), (0,x), (0, xy), \ldots, (0, xy^{t-1}).$

 b) $ I_{2t-1}^1$,  $t\ge 4$.\\ 
  $ \bullet ~ p\mid t$:   $(x,0), (y,0), (y^2,0),  (0,y), \ldots, (0, y^{t-1}), (0,x), 
(0, xy), \ldots, (0, xy^{t-2}).$ 

%
\smallskip

$ I^0_{2t-1}$ is 	of quasi-homogeneous type {$ (6,2t; 3, 2) $}.

\item a) $ I^0_{2q+2}$, $ q\ge 3$\\
$ \bullet ~ p\nmid 2q+3:   (x, 0), (y, 0), (y^2, 0)$,
 $(0,x), (0,y), \ldots, (0, y^{q+1})$,\\
\hspace*{22mm} $(0, xy), (0, xy^2), \ldots, (0, xy^{q-1})$\\
$ \bullet ~ p\mid 2q+3:$  $ (x, 0), (y, 0), (y^2, 0)$,
 $(0,x), (0,y), \ldots, (0, y^{q+2}),$\\
 \hspace*{22mm} $(0, xy), (0, xy^2), \ldots, (0, xy^{q-1})$\\ \smallskip
 
b) $ I^1_{2q+2}$, $ q\ge 3$, $p\mid 2q+3.$\\
 $(x, 0), (y, 0), (y^2, 0)$,
 $(0,x), (0,y), \ldots, (0, y^{q+1}), (0, xy), (0, xy^2), \ldots, (0, xy^{q-1})$\\ 

$ I^0_{2q+2}$ is 	of quasi-homogeneous type {$ (6,2q+3; 3, 2) $}.

\end {enumerate}

1. 	We prove that $F^{m,n}=(xy, x^m+y^n)$ is simple:\\
Since $j_2(f_1)$ is non-degenerate, this holds also for any deformation, which is then of type  $F^{p,q}$  for some $p,q$ by Proposition \ref{main}. Since $\tau^{sec}(f)$ depends only on $n$ and $m$ (by (1) above)  and since $\tau^{sec}$  is semicontinuous  by Proposition \ref{prop.semicont} there are only finitely many $F^{p,q}$  into which  $F^{m,n}$ deforms. Hence $f$ is simple.\\

 2. a) 	We prove that $G_5^0 =(x^2, y^3)$ is simple: \\
$\bullet ~ p= 3$: We have to consider 
	$$g=(x^2+ay^2, by^2+y^3+cxy+dxy^2),$$
	where $a, b, c, d\in K.$ 
	
	i) $c\ne 0$: By Proposition \ref{non-degenerate}, $g$ is of type $F$.
	
	ii)  $c= 0$: 
		
		\hskip 10pt 1) $a\ne 0$: By Proposition \ref{non-degenerate}, 	$g$ is of type $F$.
		
		\hskip 10pt 2)  $a= 0$: 
			
			\hskip 10pt 2.1) $b\ne 0$:	$g=(x^2, by^2+y^3+dxy^2).$	By Lemma \ref{lemma_degenerate}, , $g\sim(xy, x^2+y^2)=F^{2,2}.$	
					
			 \hskip 10pt 2.2) $b= 0$:	 $g=(x^2, y^3+dxy^2).$		
			 
			 	 \hskip 10pt 2.2.1) $d= 0$:		$g=G_5^0.$

			  \hskip 10pt 2.2.2) $d\ne0$:	By the proof of Proposition \ref{main}, $g\sim(x^2, y^3+xy^2)=G_5^1$.			  
			  		 			
\noindent$\bullet ~ p\ge 5$: Consider 
	$$g=(x^2+ay^2, by^2+y^3+cxy),$$
	where $a, b, c\in K.$ 
	
	i) $c\ne 0$: By Proposition \ref{non-degenerate}, there are $m,n\ge 2$ such that $g\sim F^{m,n}$.
	
		ii)  $c= 0$: 
		
		\hskip 10pt 1) $a\ne 0$: By Proposition \ref{non-degenerate}, there are $s,t\ge 2$ such that $g\sim F^{s,t}$.
		
			\hskip 10pt 2)  $a= 0$: 
			
			\hskip 10pt 2.1) $b\ne 0$:  Since $p\ne 2$, let $u\in K[[y]]$ be such that $u^2=y+b$. Consider the automorphism $\phi:R\to R$, $x\mapsto x$, $y\mapsto uy$. Then 
			$$\phi^{-1}(g)=\phi^{-1}(x^2, y^2u^2)=(x^2, y^2)\sim(xy, x^2+y^2)=F^{2,2}.$$
		 \indent\hskip 10pt 2.2) $b=0$: $g\sim G_5^0.$

\medskip		 
\medskip		 
2. b)  We prove $G_5^1=(x^2, xy^2+y^3)$ is simple. Consider
	$$g=(x^2+cy^2, ay^2+y^3+bxy+xy^2),$$
	where $a, b, c\in K$. 
	
	i) $c\ne 0$: By Proposition \ref{non-degenerate}, there are $m,n\ge 2$ such that $g\sim F^{m,n}$.
	
		ii) $c= 0$: $g=(x^2, ay^2+y^3+bxy+xy^2)$
		
		\hskip 10pt	1)  $a\ne 0$:
		
		\hskip10pt	1.1)  $b\ne 0$:  By Proposition \ref{non-degenerate}, there are $s,t\ge 2$ such that $g\sim F^{s,t}$.
		
		\hskip 10pt	1.2) $b= 0$: $g=(x^2, ay^2+y^3+xy^2)$
		
		\hskip 10pt	1.2.1) $a= 0$: $g=G_5^1.$ 
		
		\hskip 10pt	1.2.2) $a\ne 0$: By Lemma \ref{lemma_degenerate} , $g\sim F^{2,2}.$
		
	\hskip 10pt	2) $a= 0$: $g=(x^2, y^3+bxy+xy^2)$
	
	\hskip 10pt	2.1) $b= 0$: $g=G_5^1.$ 
	
	\hskip 10pt	2.2)  $b\ne 0$: By Proposition \ref{non-degenerate}, there are $k,l\ge 2$ such that $g\sim F^{k,l}$.\\
		
	3. We prove $G_7=(x^2, y^4)$ is simple. Consider
	$$g=(x^2+cy^2+dy^3,a_2y^2+a_3y^3+ y^4 +b_1xy+b_2xy^2), $$ 
	where $a_i, b_i, c, d\in K.$
	
	i) $b_1\ne 0$: By Proposition \ref{non-degenerate}, there are $m,n\ge 2$ such that $g\sim F^{m,n}$.
	
		ii)  $b_1=0$:
		
		\hskip 10pt	1)  $c\ne 0$: By Proposition \ref{non-degenerate}, there are $s,t\ge 2$ such that $g\sim F^{s,t}$.
		
		\hskip 10pt	2)  $c=0$: $g=(x^2+dy^3, a_2y^2+a_3y^3+ y^4+b_2xy^2).$
		
		\hskip 10pt	2.1)  $d\ne0$:  We may assume $d=1$.
		
			\hskip 10pt	2.1.1) $a_2\ne0$: By Lemma \ref{lemma_degenerate}, we get $g\sim F^{2,2}.$ 
			
				\hskip 10pt	2.1.2) $a_2=0$: $g=(x^2+y^3, a_3y^3+ y^4+b_2xy^2).$
				
				\hskip 10pt 2.1.2.1) $a_3\ne 0$:
				
					\hskip 10pt 2.1.2.1.1) $b_2\ne 0$: By the proof of Proposition \ref{main}, we get $g\sim (x^2, y^3)$ if $p\ge 5$,

						\indent\hskip 10pt  and $g\sim (x^2, y^3+xy^2)$ if $p=3.$ 
						
						\hskip 10pt 2.1.2.1.2) $b_2= 0$: By Lemma \ref{lemma_degenerate} , we have $g\sim (x^2, y^3)$.
		
	\hskip 10pt 2.1.2.2) $a_3= 0$:  $g=(x^2+y^3, y^4+b_2xy^2).$

	\hskip 10pt 2.1.2.2.1)	 $b_2\ne 0$: By Lemma \ref{lemma_degenerate}, we get
	$$g\sim H_6=(x^2+ y^3, xy^2).$$
	
			\hskip 10pt 2.1.2.2.2) $b_2=0$: $g=(x^2+y^3, y^4)=I^0_7.$	 
				
		\hskip 10pt	2.2) $d=0$: $g=(x^2, a_2y^2+a_3y^3+ y^4+b_2xy^2).$
		
			\hskip 10pt	2.2.1) $b_2\ne 0$:
			
			\hskip 10pt	2.2.1.1) $a_2\ne 0$: By Lemma \ref{lemma_degenerate}, $g\sim F^{2,2}$.
			
				\hskip 10pt	2.2.1.2) $a_2= 0$: $g=(x^2, a_3y^3+ y^4+b_2xy^2).$
				
				\hskip 10pt	2.2.1.2.1) $a_3\ne 0$: By the proof of Proposition \ref{main}, $g\sim(x^2,y^3)$ if $p\ge 5$ and 
				
				\indent\hskip 130pt$g\sim(x^2,y^3+xy^2)$ if $p=3$.
				
				\hskip 10pt	2.2.1.2.2) $a_3=  0$: By the proof of Proposition \ref{main}, there is $r\ge 4$ such that
				$$g=(x^2, y^4+b_2xy^2)\sim (x^2+y^r, xy^2)=H_{r+3}.$$
				
					\hskip 10pt	2.2.2) $b_2= 0$: $g=(x^2, a_2y^2+ a_3y^3+y^4).$
					
						\hskip 10pt
					2.2.2.1) $a_2\ne 0$: By Lemma \ref{lemma_degenerate}, $g\sim F^{2,2}$.
					
						\hskip 10pt
					2.2.2.2) $a_2= 0$:
					
						\hskip 10pt	2.2.2.2.1) $a_3\ne 0$:  By Lemma \ref{lemma_degenerate}, $g\sim(x^2, y^3).$
						
							\hskip 10pt	2.2.2.2.2) $a_3= 0$:  $g=G_7=(x^2, y^4).$\\

4. We prove $H_{n+3}=(x^2+y^n, xy^2)$, $n\ge 3$, is simple. Consider 
$$g=(x^2+c_2y^2+\ldots+c_{n-1}y^{n-1}+c_ny^n, a_2y^2+a_3y^3+bxy+xy^2),$$
where $c_n=1.$

i)  $b\ne 0$: By Proposition \ref{non-degenerate}, $g$ is of type $F$.

ii) $b= 0$:

\hskip 10pt 1) $c_2\ne 0$:  By Proposition \ref{non-degenerate}, $g$ is of type $F$.

\hskip 10pt 2) $c_2= 0$: $g=(x^2+c_3y^3+\ldots+c_{n-1}y^{n-1}+y^n, a_2y^2+a_3y^3+xy^2).$ 

\hskip 10pt 2.1) If $n=3$ then $g=(x^2+y^3, a_2y^2+a_3y^3+xy^2 )$. If $a_2\ne 0$, by Lemma \ref{lemma_degenerate}, $g\sim F^{2,2}.$ If $a_2=0$ and $a_3\ne 0$, by the proof of Proposition \ref{main}, $g\sim (x^2, y^3)$ if $p\ge 5$ and $g\sim (x^2, y^3+xy^2)$ if $p=3$.  If $a_2=a_3=0$  then $g=H_6.$

\hskip 10pt  2.2) If $n\ge 4$, set 
$$i=\min\{j\mid c_j\ne 0\}\ge 3.$$
Set $u=c_i+c_{i+1}y+\ldots+y^{n-i}$. Then $g\sim (u^{-1}x^2+y^i,  a_2y^2+a_3y^3+xy^2)$. Let $v\in K[[y]]$ be such that $v^2=u^{-1}$. Then $v$ is a unit. Consider the automorphism $\phi: R\to R$, $x\mapsto vx$, $y\mapsto y$. Then
 $$g\sim (x^2+y^i, \phi^{-1}(a_2y^2+a_3y^3+xy^2))=\left(x^2+y^i, a_2y^2+a_3y^3+\mathop\sum\limits_{i\ge 2}d_jxy^j\right),$$
 where $d_j\in K$ and $d_2\ne 0$.  

2.2.1) $a_2\ne 0$: By Lemma \ref{lemma_degenerate}, $g\sim F^{2,2}$.

2.2.2) $a_2= 0$:

2.2.2.1) $a_3\ne 0$: By the proof of Proposition \ref{main}, $g\sim (x^2, y^3)$ if $p\ge 5$ and 
 $g\sim (x^2, y^3+xy^2)$ if $p=3$.

2.2.2.2) $a_3= 0$: By Lemma \ref{lemma_degenerate}, $g\sim H_{i+3}$.\\

5. a)  We prove that $I^0_{2t-1}=(x^2+y^3, y^t)$, $t\ge 4$, is simple. \\
The proofs for the cases $p\nmid t$ and $p\mid t$ are put together as below.

Consider the unfolding
$$g=(x^2+y^3+cy^2, a_{2}y^2+\cdots+a_{t-1}y^{t-1}+a_{t}y^t+b_{1}xy+\cdots+b_{t-1}xy^{t-1}),$$
where $a_{t}=1$, and  $b_{t-1}=0$ if $p\nmid t$.

i)  $b_1\ne 0$: By Proposition \ref{non-degenerate}, $g$ is of type $F$.

 ii) $b_1= 0$:

\hskip 10pt 1) $c\ne 0$:  By Proposition \ref{non-degenerate}, $g$ is of type $F$.

\hskip 10pt 2) $c= 0$: $g=(x^2+y^3, a_2y^2+\ldots+a_{t-1}y^{t-1}+a_ty^t+b_2xy^2+\ldots+b_{t-1}xy^{t-1}).$

\hskip 10pt 2.1) $a_2\ne 0$:  By Lemma \ref{lemma_degenerate}, $g\sim(x^2, y^2) \sim F^{2,2}$.

\hskip 10pt 2.2) $a_2= 0$: $g=(x^2+y^3, a_3y^3+\ldots+a_{t-1}y^{t-1}+a_ty^t+b_2xy^2+\ldots+b_{t-1}xy^{t-1}).$

\hskip 10pt 2.2.1) $a_3\ne 0$: 

  \hskip 10pt 2.2.1.1)  $b_2\ne 0$: By the proof of Proposition \ref{main},  $g\sim (x^2, y^3)$ if $p\ge 5$, and $g\sim (x^2, y^3+xy^2)$ if $p= 3$.
  
  \hskip 10pt 2.2.1.2)  $b_2=0$: By Lemma \ref{lemma_degenerate}, $g\sim (x^2, y^3)$.
  
   \hskip 10pt 2.2.2)  $a_3= 0$: Set 
   $T=\min\{i\mid a_i\ne 0\}$. Then $4\le T\le t.$  
   
   \hskip 10pt 2.2.2.1) $b_2\ne 0$: By  the proof of Proposition \ref{main}, $g\sim H_{r+3}$ for some $r\ge 3$.
   
    \hskip 10pt 2.2.2.2)  $b_2= 0$: 
   
\noindent If  $b_j$ are not all 0 then we set  
  $$ Q=\min\{j\mid b_j\ne 0\}.$$
  Then $Q\ge 3$. We consider the following cases:
  
 \hskip 10pt 2.2.2.2.1) $T\le Q$: by Lemma \ref{lemma_degenerate}, $g\sim (x^2+y^3, y^T)= I^0_{2T-1}$.	
 
 \hskip 10pt 2.2.2.2.2) $T=Q+1$:	if $p\nmid T$, by Lemma \ref{lemma_degenerate}, $g\sim I^0_{2T-1}$.	If $p\mid T$, by the proof of Proposition \ref{main}, $g\sim I^1_{2T-1}$.

 \hskip 10pt 2.2.2.2.3) $T\ge Q+3$:  by the proof of Proposition \ref{main}, $g\sim I^0_{2Q+2}$. 
 
 \hskip 10pt 2.2.2.2.4) $T=Q+2$:  by the proof of Proposition \ref{main}, $g\sim I^0_{2Q+2}$ if $p\nmid 2Q+3$, and $g\sim I^1_{2Q+2}$ if $p\mid 2Q+3$.
 
\noindent If $b_j=0$ for all $j$ then by Lemma \ref{lemma_degenerate}, $g\sim I^0_{2T-1}$.

\medskip

5. b)  We prove that $I^1_{2t-1}=(x^2+y^3, y^t+xy^{t-1})$, $t\ge 4$, $p\mid t$, 
is simple.\\
Consider the unfolding
$$g=(x^2+y^3+cy^2, a_{2}y^2+\cdots+a_{t-1}y^{t-1}+a_{t}y^t+b_{1}xy+\cdots+b_{t-1}xy^{t-1}),$$
where $a_{t}=b_{t-1}=1$.

i)  $b_1\ne 0$: By Proposition \ref{non-degenerate}, $g$ is of type $F$.

 ii) $b_1= 0$:

\hskip 10pt 1) $c\ne 0$:  By Proposition \ref{non-degenerate}, $g$ is of type $F$.

\hskip 10pt 2) $c= 0$: $g=(x^2+y^3, a_2y^2+\ldots+a_{t-1}y^{t-1}+a_ty^t+b_2xy^2+\ldots+b_{t-1}xy^{t-1}).$

\hskip 10pt 2.1) $a_2\ne 0$:  By Lemma \ref{lemma_degenerate}, $g\sim(x^2, y^2) \sim F^{2,2}$.

\hskip 10pt 2.2) $a_2= 0$: $g=(x^2+y^3, a_3y^3+\ldots+a_{t-1}y^{t-1}+a_ty^t+b_2xy^2+\ldots+b_{t-1}xy^{t-1}).$

\hskip 10pt 2.2.1) $a_3\ne 0$: 

  \hskip 10pt 2.2.1.1)  $b_2\ne 0$: By the proof of Proposition \ref{main},  $g\sim (x^2, y^3)$ if $p\ge 5$, and $g\sim (x^2, y^3+xy^2)$ if $p= 3$.
  
  \hskip 10pt 2.2.1.2)  $b_2=0$: By Lemma \ref{lemma_degenerate}, $g\sim (x^2, y^3)$.
  
   \hskip 10pt 2.2.2)  $a_3= 0$:  Set 
   $$T=\min\{i\mid a_i\ne 0\}.$$
   Then $4\le T\le t.$  
   
   \hskip 10pt 2.2.2.1) $b_2\ne 0$: By the proof of Proposition \ref{main}, $g\sim (x^2+y^r, xy^2)= H_{r+3}$ for some $r\ge 3$.
   
    \hskip 10pt 2.2.2.2)  $b_2= 0$: Set  
  $$ Q=\min\{j\mid b_j\ne 0\}.$$
  Then $Q\ge 3$. We consider the following cases:
  
 \hskip 10pt 2.2.2.2.1) $T\le Q$: by Lemma \ref{lemma_degenerate}, $g\sim (x^2+y^3, y^T)= I^0_{2T-1}$.	
 
 \hskip 10pt 2.2.2.2.2) $T=Q+1$:	if $p\nmid T$, by Lemma \ref{lemma_degenerate}, $g\sim I^0_{2T-1}$.	If $p\mid T$, by the proof of Proposition \ref{main}, $g\sim I^1_{2T-1}$.

 \hskip 10pt 2.2.2.2.3) $T\ge Q+3$:  by the proof of Proposition \ref{main}, $g\sim I^0_{2Q+2}$. 
 
  \hskip 10pt 2.2.2.2.4) $T= Q+2$: by the proof of Proposition \ref{main}, $g\sim I^0_{2Q+2}$ if $p\nmid 2Q+3$, and $g\sim I^1_{2Q+2}$ if $p\mid 2Q+3$ .
\medskip

6. a) We prove $I_{2q+2}^0=(x^2+y^3, xy^q)$, $q\ge 3$, is simple. \\
The proofs for the cases $p\mid 2q+3$ and $p\nmid 2q+3$ are put together as below.\\
Consider the unfolding
$$g=(x^2+y^3+cy^2, a_2y^2+\ldots+a_{q+2}y^{q+2}+b_1xy+\ldots+b_{q-1}xy^{q-1}+b_qxy^q),$$
where $b_q=1$, and $a_{q+2}=0$ if $p\nmid 2q+3.$

i) $b_1\ne 0:$ By Proposition \ref{non-degenerate}, $g$ is of type $F$.

ii) $b_1=0:$ 

\hskip 10pt 1) $c\ne 0:$ By Proposition \ref{non-degenerate}, $g$ is of type $F$.

\hskip 10pt 2) $c=0:$

\hskip 10pt  2.1) $a_2\ne 0:$  by Lemma \ref{lemma_degenerate},  $g\sim F^{2,2}$.

\hskip 10pt  2.2) $a_2=0:$ we consider the following cases:

\hskip 10pt 2.2.1) $a_3\ne 0:$  

\hskip 10pt 2.2.1.1) $b_2\ne 0:$  
by the proof of  Proposition \ref{main},  $g\sim (x^2, y^3)$ if $p\ge 5$,

 \hskip 10pt and $g\sim (x^2, y^3+xy^2)$ if $p= 3$.

\hskip 10pt 2.2.1.2) $b_2=0:$  by Lemma \ref{lemma_degenerate},  $g\sim (x^2, y^3)$.

\hskip 10pt 2.2.2) $a_3= 0:$  

\hskip 10pt 2.2.2.1) $b_2\ne 0$:  by Lemma \ref{lemma_degenerate} and the proof of Proposition \ref{main},  $g\sim H_{r+3}=(x^2+y^r, xy^2)$ for some $r\ge 3$.

\hskip 10pt2.2.2.2) $b_2=0$: Set $Q=\min\{j\mid b_j\ne 0\}$. Then $3\le Q\le q.$

 If there is $i\ge 4$ such that $a_i\ne 0$ then we set 
 $$T=\min\{i\mid a_i\ne 0\}.$$
 Then $T\ge 4$.

\hskip 10pt 2.2.2.2.1) $T\le Q$: by Lemma \ref{lemma_degenerate}, $g\sim I^0_{2T-1}=(x^2+y^3, y^T)$. 

\hskip 10pt 2.2.2.2.2) $T=Q+1$: by the proof of Proposition \ref{main}, $g\sim I^0_{2T-1}$ if $p\nmid T$,  and $g\sim (x^2+y^3, x^T+xy^{T-1})=I^1_{2T-1}$ if $p\mid T.$

\hskip 10pt 2.2.2.2.3)  $T\ge Q+3$:  by the proof of Proposition \ref{main}, $g\sim I^0_{2Q+2}$. 

\hskip 10pt 2.2.2.2.4) $T=Q+2$:  by the proof of Proposition \ref{main}, $g\sim I^0_{2Q+2}=(x^2+y^3, xy^Q)$ if $p\nmid 2Q+3$, and $g\sim I^1_{2Q+2}=(x^2+y^3, xy^Q+y^{Q+2})$ if $p\mid 2Q+3$.
 
 If $a_i=0$ for all $i$ then by  Lemma \ref{lemma_degenerate},  we obtain $g\sim I^0_{2Q+2}$.  
 \medskip
 
6.  b) We prove $I_{2q+2}^1=(x^2+y^3, xy^q +y^{q+2})$, $q\ge 3$, $p \mid 2q+3$  is simple.\\
 Consider the unfolding
$$g=(x^2+y^3+cy^2, a_2y^2+\ldots+a_{q+2}y^{q+2}+b_1xy+\ldots+b_{q-1}xy^{q-1}+b_qxy^q),$$
where $a_{q+2}=b_q=1$.

i) $b_1\ne 0:$ By Proposition \ref{non-degenerate}, $g$ is of type $F$.

ii) $b_1=0:$ 

\hskip 10pt 1) $c\ne 0:$ By Proposition \ref{non-degenerate}, $g$ is of type $F$.

\hskip 10pt 2) $c=0:$

\hskip 10pt  2.1) $a_2\ne 0:$  by Lemma \ref{lemma_degenerate},  $g\sim (x^2, y^2)\sim F_{3}^{2,2}$. 

\hskip 10pt  2.2) $a_2=0:$ we consider the following cases:

\hskip 10pt 2.2.1) $a_3\ne 0:$  

\hskip 10pt 2.2.1.1) $b_2\ne 0:$  
by the proof of  Proposition \ref{main},  $g\sim G^0_5=(x^2, y^3)$ if $p\ge 5$,

 \hskip 10pt and $g\sim G^1_5= (x^2, y^3+xy^2)$ if $p= 3$.

\hskip 10pt 2.2.1.2) $b_2=0:$  by Lemma \ref{lemma_degenerate},  $g\sim (x^2, y^3)$.

\hskip 10pt 2.2.2) $a_3= 0:$  Set $$T=\min\{i\mid a_i\ne 0\}.$$
 Then $T\ge 4$.

\hskip 10pt 2.2.2.1) $b_2\ne 0$:  by the proof of Proposition \ref{main},  $g\sim H_{r+3}=(x^2+y^r, xy^2)$ for some $r\ge 3$.

\hskip 10pt2.2.2.2) $b_2=0$: Set $Q=\min\{j\mid b_j\ne 0\}$. Then $3\le Q\le q.$

\hskip 10pt 2.2.2.2.1) $T\le Q$: by Lemma \ref{lemma_degenerate}, $g\sim I^0_{2T-1}=(x^2+y^3, y^T)$. 

\hskip 10pt 2.2.2.2.2) $T=Q+1$: by the proof of Proposition \ref{main}, $g\sim I^0_{2T-1}$ if $p\nmid T$,  and $g\sim (x^2+y^3, x^T+xy^{T-1})=I^1_{2T-1}$ if $p\mid T.$

\hskip 10pt 2.2.2.2.3)  $T\ge Q+3$:  by the proof of Proposition \ref{main}, $g\sim I^0_{2Q+2}$. 

\hskip 10pt 2.2.2.2.4) $T=Q+2$:  by the proof of Proposition \ref{main}, $g\sim I^0_{2Q+2}=(x^2+y^3, xy^Q)$ if $p\nmid 2Q+3$, and $g\sim I^1_{2Q+2}=(x^2+y^3, xy^Q+y^{Q+2})$ if $p\mid 2Q+3$.
  \end{proof}


\section{Simple isolated complete intersection singularities of $I_{2,2}$ in characteristic 2}
In this section, we assume that the field $K$ has characteristic $p=2$.
\begin{Lemma}
	Let $f\in I_{2,2}$ be such that $\order(f)=2$. Then   either  there is a $g_2\in \mathfrak{m}^2$ such that
	$$f\sim (xy, g_2)$$
 or there are  $h_1\in \mathfrak{m}^3$  and $g_2\in \mathfrak{m}^2$ such that
$$f\sim (x^2+h_1, g_2).$$
If $g_2\in\frak{m}^3$ then $f$ is not simple.
\end{Lemma}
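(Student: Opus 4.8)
The plan is to prove the two assertions separately: the trichotomy of normal forms follows from the characteristic~$2$ splitting lemma, while the non-simpleness statement is reduced to Proposition~\ref{not simple2}(2) together with the semicontinuity of modality.

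First I would establish the normal form. Since $\order(f)=\min\{\order(f_1),\order(f_2)\}=2$, at least one component has order $2$; after multiplying $f$ by the permutation matrix $\left(\begin{smallmatrix}0&1\\1&0\end{smallmatrix}\right)\in GL(2,R)$ I may assume $\order(f_1)=2$, so $f_1\in\mathfrak{m}^2$. Applying the right splitting lemma in characteristic~$2$ (Lemma~\ref{splitting lemma char 2}) to $f_1$ with $n=2$ gives $l\in\{0,1\}$. If $l=1$ then $f_1\mathop\sim\limits^r xy$; if $l=0$ then $f_1\mathop\sim\limits^r g$ with $g$ a cubic or $g=x^2+h_1$, $h_1\in\mathfrak{m}^3$, and the purely cubic alternative is excluded because $\order(f_1)=2$, so $f_1\mathop\sim\limits^r x^2+h_1$. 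Applying the corresponding automorphism $\psi\in Aut(R)$ to both components (a contact equivalence with $U=E_2$) and setting $g_2:=\psi(f_2)$, which lies in $\mathfrak{m}^2$ since automorphisms preserve the order and $\order(f_2)\ge 2$, yields $f\sim(xy,g_2)$ or $f\sim(x^2+h_1,g_2)$.

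Next I would prove non-simpleness assuming $g_2\in\mathfrak{m}^3$, treating the two normal forms in turn. In the first case $f\sim(xy,g_2)$, I use the automorphism $\varphi\in Aut(R)$ defined by $\varphi(x)=x$, $\varphi(y)=x+y$; since $\characteristic(K)=2$ we have $\varphi(xy)=x(x+y)=x^2+xy$, so $f\sim(x^2+xy,\varphi(g_2))$ with $\varphi(g_2)\in\mathfrak{m}^3$. By Proposition~\ref{not simple2}(2), applied with $a=1$ and $h=\varphi(g_2)$ of order $\ge 3$, this is not simple, hence $f$ is not simple. In the second case $f\sim(x^2+h_1,g_2)$ with $h_1,g_2\in\mathfrak{m}^3$, I would deform the first component, setting $F_\lambda:=(x^2+\lambda xy+h_1,\,g_2)$, an unfolding of $F_0\sim f$. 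For $\lambda\neq 0$ the $2$-jet $x^2+\lambda xy$ is non-degenerate (its Hessian has determinant $\lambda^2\neq 0$ in characteristic~$2$), so $\corank(x^2+\lambda xy+h_1)=0$ and Lemma~\ref{splitting lemma char 2} gives $x^2+\lambda xy+h_1\mathop\sim\limits^r xy$; thus $F_\lambda\sim(xy,\tilde g_2)$ with $\tilde g_2\in\mathfrak{m}^3$, which is not simple by the first case. Since $F_\lambda\in I_{2,2}$ for $\lambda$ near $0$ by the openness of the ICIS condition (Proposition~\ref{prop.semicont}(2)), the semicontinuity of the modality (Proposition~\ref{prop.modular}(2)) forces $F_0$, and hence $f$, to be non-simple.

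The main obstacle, and the source of the two-case structure, is the characteristic~$2$ degeneracy of the quadratic form $x^2$: unlike in odd characteristic its Hessian vanishes identically, so a component with $2$-jet $x^2$ cannot be split off directly and must be handled through the deformation $F_\lambda$ that perturbs it to a non-degenerate $2$-jet. Care is needed to check that this perturbation genuinely lands in the already-treated case $(xy,\ast)$ for every $\lambda\neq 0$ and to apply the semicontinuity of modality in the correct direction, namely that the special fibre $F_0$ has modality at least that of the generic, non-simple fibres $F_\lambda$. The first case, by contrast, is essentially immediate once one observes the characteristic~$2$ identity $xy\mathop\sim\limits^r x^2+xy$, which brings it into the scope of Proposition~\ref{not simple2}(2).
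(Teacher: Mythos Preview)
Your argument is correct. The normal-form part is identical to the paper's: swap components if needed, apply the characteristic~$2$ splitting lemma to $f_1$, and transport the resulting automorphism to both components.

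For the non-simpleness assertion the paper simply writes ``Proposition~\ref{not simple2} implies that $f$ is not simple if $g_2\in\mathfrak{m}^3$'', whereas you spell out the reduction. Your treatment of the case $f\sim(xy,g_2)$ via the substitution $y\mapsto x+y$, giving $(x^2+xy,\varphi(g_2))$ and hence a direct hit of Proposition~\ref{not simple2}(2) with $a=1$, is exactly the intended bridge. For the case $f\sim(x^2+h_1,g_2)$ you take a genuinely different route: rather than invoking Proposition~\ref{not simple2}(2) with $a=0$ (which, as literally stated, concerns $(x^2,h)$ and not $(x^2+h_1,h)$), you deform the first component by $\lambda xy$, use the splitting lemma to land in the already-settled case $(xy,\tilde g_2)$ for every $\lambda\neq 0$, and then pull non-simpleness back to $\lambda=0$ by semicontinuity of the modality. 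This is a clean and rigorous way to absorb the extra cubic term $h_1$; the paper's one-line citation leaves this reduction implicit (relying on the fact that the underlying orbit-count argument for Proposition~\ref{not simple2}(2) lives at the $3$-jet level and is insensitive to $h_1$). Your version trades that implicit jet-level reasoning for an explicit use of Proposition~\ref{prop.modular}(2), which is arguably more transparent.
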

\begin{proof}
Let	$f=(f_1, f_2)$. We may assume that $\order(f_1)=2$. By Lemma \ref{splitting lemma char 2}, either there is $\phi\in Aut(R)$ such that $\phi(f_1)=xy$ or there is $\varphi\in Aut(R)$ such that $\varphi(f_1)=x^2+h_1$ for some $h_1\in\mathfrak{m}^3$. Therefore, either $f\sim (xy, \phi(f_2))$ or $f\sim (x^2+h_1, \varphi(f_2))$. 
Proposition \ref{not simple2} implies that $f$ is not simple if $g_2\in\frak{m}^3$ .
This proves the lemma.
\end{proof}

\begin{Proposition}
	Let $g=(xy, g_2)\in I_{2,2}$, where $g_2\in\mathfrak{m}^2$. Then there are $m,n\ge 2$ such that $g\sim (xy, x^n+y^m)=F^{n,m}$.
\end{Proposition}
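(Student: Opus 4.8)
The statement is the characteristic-$2$ analogue of Proposition \ref{non-degenerate}, and the plan is to run exactly the argument given there, with the simplification that the first component is already in the normal form $g_1=xy$ (so no appeal to the splitting Lemma \ref{splitting lemma char 2} is needed). Throughout I work with the ideal $\langle xy,g_2\rangle$ and use that modifying $g_2$ by an element of $\langle xy\rangle$, or multiplying it by a unit, is a contact equivalence realised by a suitable $U\in GL(2,R)$, while a diagonal coordinate change $\phi\in Aut(R)$ sends $xy$ to a unit multiple of $xy$ and so keeps $g_1$ in the form $xy$ after rescaling.

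First I would use $xy$ to delete from $g_2$ every monomial divisible by $xy$; since $g_2\in\mathfrak{m}^2$ this leaves $g_2=a(x)+b(y)$ with $a\in K[[x]]$, $b\in K[[y]]$ and $\order(a),\order(b)\ge 2$. Next I claim $a\ne 0$ and $b\ne 0$: if, say, $a=0$, then $\langle xy,b(y)\rangle=\langle xy,y^m\rangle$ for some $m$, whose zero set is the line $\{y=0\}$, contradicting that $g$ is an ICIS (equivalently that $\langle xy,g_2\rangle$ is a complete intersection with $\mng=2$, forcing $\dim R/\langle xy,g_2\rangle=0$). Writing $a=u_1x^n$ and $b=u_2y^m$ with units $u_1\in K[[x]]$, $u_2\in K[[y]]$ and $n,m\ge 2$, I multiply $g_2$ by the unit $(u_1u_2)^{-1}$ to obtain $u_2^{-1}x^n+u_1^{-1}y^m$, and again clear with $xy$ the tails of $u_2^{-1}x^n$ and $u_1^{-1}y^m$ (all divisible by $xy$ since $n,m\ge 1$). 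This yields $g\sim(xy,\alpha x^n+\beta y^m)$ with $\alpha=u_2^{-1}(0)\ne 0$ and $\beta=u_1^{-1}(0)\ne 0$. Finally, since $K$ is algebraically closed I choose $\lambda,\mu\in K^*$ with $\alpha\lambda^n=\beta\mu^m=1$ and apply the diagonal automorphism $x\mapsto\lambda x$, $y\mapsto\mu y$; after rescaling $xy$ by the unit $(\lambda\mu)^{-1}$ this gives $g\sim(xy,x^n+y^m)=F^{n,m}$ with $n,m\ge 2$, as required.

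The argument is essentially obstruction-free, and that is precisely the point worth emphasising: unlike the degenerate-case lemmas, none of these steps divides by $2$, so the reduction is genuinely uniform in the characteristic. The only two places needing a word of care are (i) the complete-intersection step, which forces both pure powers $x^n$ and $y^m$ to survive, and (ii) the final normalisation of $\alpha,\beta$ to $1$, which relies on the existence of $n$-th and $m$-th roots in $K$; this holds because $K$ is algebraically closed even when $p\mid n$ or $p\mid m$, since the polynomial $t^n-\alpha^{-1}$ always has a root. In short, one may simply invoke the proof of Proposition \ref{non-degenerate} from the point where $f_1=xy$ onward.
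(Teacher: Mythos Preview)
Your proof is correct and follows essentially the same route as the paper: the paper simply remarks that the argument is the part of the proof of Proposition~\ref{non-degenerate} from the point $f_1=xy$ onward, which does not depend on the characteristic, and you have reproduced precisely those steps (kill $xy$-multiples, use the ICIS condition to ensure both pure powers survive, multiply by $(u_1u_2)^{-1}$, clear again, then normalise via $n$-th and $m$-th roots using that $K$ is algebraically closed).
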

\begin{proof}
	The proof is similar to a part of the proof of Proposition \ref{non-degenerate}, which does not depend on characteristic.
\end{proof}

\begin{Proposition}
	Let $g=(x^2+h_1, g_2)\in I_{2,2}$, where $h_1\in\mathfrak{m}^3$ and $j_2(g_2)=ax^2+bxy+cy^2\ne 0$, where $a,b, c\in K$.
	\begin{enumerate}
	 \item [a)] If $b=0$ and $c\ne 0$ then $g\sim (x^2, y^2)$.
	  \item [b)] If $b\ne 0$ and $c\ne 0$  then $g\sim (x^2, xy+ y^2)$.
	\end{enumerate}
\end{Proposition}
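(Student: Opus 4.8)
The plan is to reduce both cases, in characteristic $2$, to an ideal I can write down explicitly, systematically exploiting that squaring is additive (so $(x+yw)^2=x^2+y^2w^2$ and $x^2+y^2=(x+y)^2$) while the Hessian of $x^2$ vanishes; this is why the splitting lemma (Lemma \ref{splitting lemma char 2}) is useless for $f_1$ and Weierstraß division must take its place. First, a step common to both cases: since $h_1\in\mathfrak{m}^3$ we have $f_1(x,0)=x^2+h_1(x,0)=x^2\cdot(\text{unit})$, so $f_1:=x^2+h_1$ is regular of order $2$ in $x$. Weierstraß division then writes $g_2=q\,f_1+a_0(y)+x\,a_1(y)$ with $q\in R$ and $a_0,a_1\in K[[y]]$; subtracting $q\,f_1$ is a $GL(2,R)$-move, so $g\sim(x^2+h_1,\ a_0(y)+x\,a_1(y))$. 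Comparing $2$-jets, and using that the remainder has $x$-degree $<2$, forces the $x^2$-coefficient to cancel and yields $a_0=c\,y^2+\cdots\in(y^2)$ and $a_1=b\,y+\cdots\in(y)$.

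For case a) ($b=0$, $c\neq 0$) I would argue purely at the level of ideals. Here $a_1\in(y^2)$ and $a_0=y^2\cdot(\text{unit})$, so $a_0+x\,a_1=y^2\cdot(\text{unit})$ and hence $\langle f_1,g_2\rangle=\langle x^2+h_1,\ y^2\rangle$. Modulo $y^2$ only monomials $x^iy^j$ with $j\le 1$ and total degree $\ge 3$ survive in $h_1$, and all of these are divisible by $x^2$; hence $x^2+h_1\equiv x^2\cdot(\text{unit})\pmod{y^2}$, giving $\langle x^2+h_1,y^2\rangle=\langle x^2,y^2\rangle$. Since contact equivalence of tuples is equivalent to contact equivalence of the generated ideals, $g\sim(x^2,y^2)$.

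For case b) ($b\neq 0$, $c\neq 0$) I would first factor $a_0+x\,a_1=y\,u_1\,(x+y\,w(y))$ with $u_1,w$ units and $w(0)=c/b\neq 0$; multiplying the second component by $u_1^{-1}$ gives $g\sim(x^2+h_1,\ y(x+yw))$. The coordinate change $x\mapsto x+y\,w(y)$ sends the second component to $xy$ and, by additivity of squaring, the first to $x^2+w(0)^2y^2+(\text{higher order})$. Using $xy$ to delete all terms divisible by $xy$, I reach $(\alpha(x)+\beta(y),\ xy)$ with $\alpha=x^2\cdot(\text{unit})$ and $\beta=y^2\cdot(\text{unit})$, the latter because $w(0)\neq 0$ forces $\beta$ to have order exactly $2$ in $y$. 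Proposition \ref{non-degenerate} (whose proof is characteristic-free) then yields $g\sim(xy,\ x^2+y^2)=F^{2,2}$. Finally, the substitution $y\mapsto x+y$ carries $\langle xy,x^2+y^2\rangle$ to $\langle x^2+xy,\ y^2\rangle$, and interchanging $x$ and $y$ carries this to $\langle x^2,\ xy+y^2\rangle$, so $g\sim(x^2,xy+y^2)$.

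The main obstacle, and the only genuinely characteristic-$2$ input, is the last identification $F^{2,2}\sim(x^2,xy+y^2)$: it rests on $x^2+y^2=(x+y)^2$ and fails in odd characteristic, where $F^{2,2}$ and $(x^2,xy+y^2)$ are distinct normal forms. Everything else is careful bookkeeping of orders, made delicate by the vanishing Hessian of $x^2$, which is exactly what blocks a direct appeal to the splitting lemmas and forces the detour through Weierstraß division.
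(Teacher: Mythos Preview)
Your argument is correct and complete: the Weierstra\ss\ reduction, the ideal-level argument in (a), and the coordinate change plus Proposition~\ref{non-degenerate} in (b) all check out, as does the final char-$2$ identification $F^{2,2}\sim(x^2,xy+y^2)$ via $y\mapsto x+y$.

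Your route is genuinely different from the paper's. The paper first normalises the $2$-jet of the second component (so that $g\sim(x^2+h_1,\,y^2+h_2)$ in~(a) and $g\sim(x^2+h_1,\,xy+y^2+h_2)$ in~(b), with $h_i\in\mathfrak m^3$), and then invokes Proposition~\ref{Merle}: the targets $f=(x^2,y^2)$ and $f=(x^2,xy+y^2)$ are quasi-homogeneous of type $(2,2;1,1)$, one checks directly that $T^1_\nu(f)=0$ for all $\nu\ge 1$, and since $v_{{\bf d},{\bf a}}(h)\ge 1$ one concludes $f+h\sim f$. So the paper uses the $T^1$-graded machinery as a black box for finite determinacy, while you do everything by explicit ideal and coordinate manipulations. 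Your approach is more elementary and self-contained (no appeal to Proposition~\ref{Merle}); the paper's approach is shorter once that proposition is in hand and explains structurally \emph{why} the order-$3$ tails are irrelevant.

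One small quibble, unrelated to the proof itself: your closing remark that in odd characteristic ``$F^{2,2}$ and $(x^2,xy+y^2)$ are distinct normal forms'' is not correct. When $p\ne 2$ one can complete the square in $xy+y^2$ and obtain $\langle x^2,xy+y^2\rangle=\langle x^2,(x+2y)^2\rangle\sim\langle x^2,y^2\rangle\sim F^{2,2}$; the point is rather that your specific substitution $y\mapsto x+y$ uses $x^2+y^2=(x+y)^2$, which is special to characteristic~$2$.
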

\begin{proof}

The proposed normal forms are of quasi-homogeneous type $({\bf d}; {\bf a})=(2,2;1,1)$. 

a) If $b=0$ and $c\ne 0$
then $g\sim (x^2+h_1, y^2+h_2)$,  $h_i\in \frak{m}^3$. Setting 
$$f=(x^2, y^2), \hskip 5pt h=(h_1, h_2),$$
 we have  $v_{{\bf d}, {\bf a}}(h)\ge 1$ and
	$T^1(f)=R^2/\langle x^2, y^2\rangle R^2$.
	It is easy to check $T^1_\nu(f)=0$ for all $\nu\ge 1$ (see below).  This implies
	$\sup\{i\mid T^1_i(H)\ne 0\}\le 0<v_{{\bf d}, {\bf a}}(h)$ and by
 Proposition \ref{Merle} we deduce $f\sim f+h\sim g$.\\

b) If $b\ne 0$ and $c\ne 0$ then
$g\sim (x^2+h_1, xy+y^2+h_2)$,  $h_i\in \frak{m}^3$. Setting
	$$f=(x^2, xy+y^2), \hskip 5pt h=(h_1, h_2).$$
we have  $v_{{\bf d}, {\bf a}}(h)\ge 1$. Moreover,
$T^1(f)=R^2/D,$
with $$D:=\langle(0,y), (0,x)\rangle+\langle x^2,  xy+y^2\rangle R^2.$$
  Then $(x^2,0)$, $(xy^2,0) $, $(y^3,0)$, $(0, xy^2)$, $(0, y^3)\in D.$ 
  We prove that for $\nu\ge 1$
  $$\left(\sum\limits_{i+j=2+\nu}c_{ij}x^iy^j, \sum\limits_{h+k=2+\nu}d_{hk}x^hy^k\right)\in D.$$
 Let $(i,j)\in \N^2$ be such that $i+j=2+\nu.$ If $i\ge 2$ then $(x^iy^j,0)\in D$. If $i=1$ then $j\ge 2$ so that $(xy^j,0)\in D$. If $i=0$ then $j\ge 3$ so that we get $(y^j,0)\in D$. Similarly, let $(h,k)\in \N^2$ be such that $h+k=2+\nu.$   If $h\ge 1$
   then $(0, x^hy^k)\in D$. If $h=0$ then $k\ge 3$ so that we get $(0, y^k)\in D.$
  This implies 
	$T^1_\nu(f)=0$ for all $\nu\ge 1$. By Proposition \ref{Merle}, $f\sim f+h \sim g.$ \end{proof}
	
\begin{Theorem} \label{simple-char2}
All ICIS of the  table 3 are simple and these are the only ones if $p=$ char$(K) = 2$.
$$
\begin{tabular}{|l |l l| }
\hline
	     &&  \\[-2ex]
Type & Equation of $f$ &   \\
	     &&  \\[-2ex]
\hline
	     &&  \\[-2ex]
$ F^{m,n}$ \small{$m,n\ge 2$}&$ (xy, x^m+y^n) $& \\
	     &&  \\[-2ex]
	     \hline
	     &&  \\[-2ex]
$F^{2,2;0}$ &$ (x^2, y^2) $&  \\
	     &&  \\[-2ex]
\hline
	     &&  \\[-2ex]
$F^{2,2;1}$ &$ (x^2, xy+y^2) $& \\
 [3pt] 
\hline
\end{tabular}
$$
	\centerline{Table 3}
\end{Theorem}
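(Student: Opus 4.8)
The plan is to prove the two implications separately and, exactly as in the case $p\ne 2$, to reduce everything at once to an order-two ICIS in two variables. For the completeness of the list I would start from a simple $f=(f_1,f_2)\in I_{2,2}$. By Theorem~\ref{th.notsimple} simpleness forces $\order(f)\le 2$, and if $\order(f)=1$ the implicit function theorem turns $f$ into a hypersurface singularity of type $A_k$; so I may assume $\order(f)=2$. The first Lemma of this section (built on the characteristic-two splitting Lemma~\ref{splitting lemma char 2}) then puts $f$ into one of the two normal shapes $(xy,g_2)$ or $(x^2+h_1,g_2)$ with $g_2\in\mathfrak{m}^2$, $h_1\in\mathfrak{m}^3$, and records that $g_2\in\mathfrak{m}^3$ already makes $f$ non-simple; hence I may assume $\order(g_2)=2$.

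In the shape $(xy,g_2)$ the Proposition on $(xy,g_2)$ gives $f\sim F^{n,m}$ immediately. In the shape $(x^2+h_1,g_2)$ I would use $f_1$ to delete the $x^2$-monomial of $g_2$, reducing its two-jet to $bxy+cy^2$. If $c\ne 0$, the Proposition on $(x^2+h_1,g_2)$ produces $(x^2,y^2)=F^{2,2;0}$ when $b=0$ and $(x^2,xy+y^2)=F^{2,2;1}$ when $b\ne 0$. If $c=0$ and $b=0$ the reduced $g_2$ lies in $\mathfrak{m}^3$, so $f$ is not simple; if $c=0$ and $b\ne 0$ the two-jet $bxy$ of $g_2$ is nondegenerate, so after applying Lemma~\ref{splitting lemma char 2} to $g_2$ we are back in the shape $(xy,\ast)$ and land again in the family $F^{n,m}$. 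This exhausts all cases and shows the list is complete.

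For the converse I would show that each listed family is simple. The family $F^{m,n}$ is handled verbatim as in the proof of Theorem~\ref{simple-table}: in characteristic two $xy$ is still nondegenerate (its Hessian has determinant $1$), so every deformation keeps a nondegenerate quadratic component and is therefore of type $F$ by the Proposition on $(xy,g_2)$; since $\tau^{sec}$ depends only on $(m,n)$ and is semicontinuous (Proposition~\ref{prop.semicont}), only finitely many types occur, and $F^{m,n}$ is simple. For $F^{2,2;0}=(x^2,y^2)$ and $F^{2,2;1}=(x^2,xy+y^2)$ I would run the explicit, finite deformation analysis of Theorem~\ref{simple-table}: compute a $K$-basis of $T^{1,sec}$ (in characteristic two the partials of $x^2,y^2$ vanish, so the Jacobian part of the tangent image is small and $\tau^{sec}$ is a small explicit number), form $F_{\bf t}=f+\sum t_i g_i$, and observe that every member is an order-$\le 2$ ICIS, hence---by the classification just proved---of type $F^{p,q}$, $F^{2,2;0}$ or $F^{2,2;1}$; the semicontinuity of $\tau^{sec}$ then bounds the invariant and leaves only finitely many contact classes.

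The crux, and the reason characteristic two needs its own treatment, is the degeneracy of squaring: $x^2+y^2=(x+y)^2$ is a perfect square, so this quadratic form is degenerate and the diagonalization used for $p\ne 2$ breaks down. This is exactly what creates the exceptional forms, so the delicate point is keeping the identifications straight. One must check that $(x^2,y^2)$ is genuinely inequivalent to the member $F^{2,2}=(xy,x^2+y^2)$ of the family---the squaring map $z\mapsto z^2$ on $\mathfrak{m}$ vanishes identically modulo $\langle x^2,y^2\rangle$ but not modulo $\langle xy,x^2+y^2\rangle$---whereas $(x^2,xy+y^2)$ coincides with $F^{2,2}$ via the automorphism $x\mapsto x+y$. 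Alongside this, the deformation bookkeeping for $F^{2,2;0}$ and $F^{2,2;1}$ must ensure that no deformation escapes into the non-simple stratum $(x^2,\mathfrak{m}^3)$, which is guaranteed by the semicontinuity of the modality (Proposition~\ref{prop.modular}); everything else follows from the lemmas and propositions already established in this section.
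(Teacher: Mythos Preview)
Your overall strategy matches the paper's: reduce to order two via Theorem~\ref{th.notsimple}, split with Lemma~\ref{splitting lemma char 2}, invoke the two Propositions of this section for completeness, and then verify simpleness of each normal form by writing out the sectional semiuniversal unfolding and checking that only finitely many contact classes occur. Your case $c=0,\ b\ne 0$ (reducing back to the $(xy,\ast)$ shape via the splitting lemma applied to $g_2$) is a clean way to close a sub-case the stated Proposition does not cover.

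There is one genuine slip. In the simpleness argument for $F^{2,2;0}$ and $F^{2,2;1}$ you write that avoidance of the non-simple stratum $(x^2,\mathfrak m^3)$ ``is guaranteed by the semicontinuity of the modality (Proposition~\ref{prop.modular}).'' That is circular: semicontinuity says deformations of a simple ICIS are simple, but simplicity of $f$ is exactly what you are proving. The paper avoids this by doing the explicit two-parameter (resp.\ one-parameter) case split on $g=(x^2+axy,\,y^2+bxy)$ and $g=(x^2+ay^2,\,xy+y^2)$; the point is simply that in these concrete unfoldings both components carry a fixed degree-two monomial, so neither can drop into $\mathfrak m^3$. Replace the modality appeal by this direct observation and your argument goes through.

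Your side remark that $(x^2,xy+y^2)\sim(xy,x^2+y^2)$ via $x\mapsto x+y$ is correct in characteristic~$2$ (indeed $\phi(xy)=xy+y^2$ and $\phi(x^2+y^2)=x^2$), so $F^{2,2;1}$ already appears in the $F^{m,n}$ family as $F^{2,2}$; your squaring-map invariant correctly separates $F^{2,2;0}=(x^2,y^2)$ from $F^{2,2}$. The paper does not comment on this redundancy.
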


\begin{proof}To prove that the $f$ from table 3 are simple, we proceed as in the proof of Theorem \ref{simple-table}. We have the following bases of $T^{1,sec}(f)$:
\begin{enumerate}
\item $ F^{m,n}$, { $m,n\ge 2$}\\
$\bullet~ m~ \textit{or } n \text{ odd:} ~(x,0), (y,0),$ 
 $  (0,x), \ldots, (0, x^{m-1}), (0, y), \ldots, (0, y^{n-1}).$\\
$\bullet$ $m, n$ even, $m\ge n$:  $(x,0), (y,0),$
 $(0,x), \ldots, (0, x^{m}),  (0, y), \ldots, (0, y^{n-1}). $
\smallskip

$  F^{m,n}$ is of quasi-homogeneous type {$ (m+n, mn; n, m) $}.
\item $ F^{2,2;0}$ \\
$ (x,0), (y,0), (xy,0),  (0,x),  (0, y), (0,xy). $ 
\smallskip

$ F^{2,2;1}$\\
$ (x,0),  (y, 0), (y^2,0).$
\smallskip

$ F^{2,2}$ is of quasi-homogeneous type $ (2,2;1,1)$.
\end {enumerate}

1. The proof is similar to the proof of Proposition \ref{non-degenerate}, which works also for $p=2$.\\

2.	We prove that $ F^{2,2;0}=(x^2, y^2)$ is simple.

\hskip 10pt  Consider	$g=(x^2+axy, y^2+bxy)$ with $a, b\in K.$
	
\hskip 10pt i) $a= 0$: $g=(x^2, y^2+bxy)$

\hskip 20pt 1) $b=0$: $g=(x^2, y^2)$.

\hskip 20pt 2) $b\ne0$: Using the automorphism $\phi:$ $x\mapsto b^{-1}x$, $y\mapsto y$ we get

 \hskip 20pt  $\phi(g)=(b^{-2}x^2, xy+y^2) \sim (x^2, xy+y^2).$

\hskip 10pt ii) $a\ne0$:  Using the automorphism $\phi:$ $x\mapsto x$, $y\mapsto - a^{-1}x + a^{-1}y$, we get 

 \hskip 20pt  $\phi(g)=(xy, (a^{-2}-a^{-1}b)x^2+a^{-1}bxy+a^{-2}y^2)\sim (xy, (a^{-2}-a^{-1}b)x^2+a^{-2}y^2).$

\hskip 20pt 1) $a^{-1}\ne b$: $g\sim (xy, x^2+y^2)=F^{2,2}.$

\hskip 20pt 2) $a^{-1}= b$: $g\sim (xy, y^2)$ is not a complete intersection.\\

We prove that $F^{2,2;1}=(x^2, xy+y^2)$ is simple. 

\hskip 10pt  Consider $g=(x^2+ay^2, xy+y^2)$, where $a\in K$. Using $\phi: x\mapsto x-\sqrt{a}y$, $y\mapsto y$ 

\hskip 10pt we get $\phi(g)=(x^2, (1-\sqrt{a})y^2+xy)$.

\hskip 10pt i) $a= 1$: $g$ is not a complete intersection.

\hskip 10pt ii) $a\ne1$: Let $c=1-\sqrt{a}$. Using the automorphism
$\varphi: x\mapsto \sqrt{c}x,$ $y\mapsto c^{-\frac{1}{2}}y$

\hskip 10pt we get
$g\sim \varphi(\phi(g))=(cx^2, xy+y^2)\sim (x^2, xy+y^2).$
\end{proof}
{\bf{Funding:}} This research is funded by Vietnam Ministry of Education and Training (MOET) under grant number B2024-DQN-02.\\

{\bf{Acknowledgements:}} We would like to thank Shing Toung Yau and his students Hao Zuo and Hong Rui Ma for several questions that helped to improve the presentation.


\providecommand{\bysame}{\leavevmode\hbox to3em{\hrulefill}\thinspace}
\providecommand{\MR}{\relax\ifhmode\unskip\space\fi MR }
\providecommand{\MRhref}[2]{%
	\href{http://www.ams.org/mathscinet-getitem?mr=#1}{#2}
}
\providecommand{\href}[2]{#2}

\noindent 
Thuy Huong Pham\\
Department of Mathematics and Statistics, Quy Nhon University, 170 An Duong Vuong, Quy Nhon, Vietnam\\
Email address: phamthuyhuong@qnu.edu.vn

\medskip
\noindent Gerhard Pfister\\
Department of Mathematics, RPTU University of Kaiserslautern-Landau, Gottlieb-Daimler-Straße, 67663 Kaiserslautern, Germany\\
Email address: pfister@mathematik.uni-kl.de

\medskip
\noindent Gert-Martin Greuel\\
Department of Mathematics, University of Kaiserslautern, Germany\\
Email address: greuel@mathematik.uni-kl.de
\end{document}